\DeclareMathOperator*{\plim}{P-lim}
\DeclareMathOperator{\dist}{dist}
\DeclareMathOperator{\Cov}{Cov}
\DeclareMathOperator{\sgn}{sgn}
\DeclareMathOperator{\err}{err}
\DeclareMathOperator\supp{supp}
\def\index#1{}
\theoremstyle{definition}
\newtheorem{rem}{Remark}[section]
\newtheorem{expl}[rem]{Example}
\newtheorem{definition}[rem]{Definition}
\newtheorem{ass}[rem]{Assumption}
\theoremstyle{plain}
\newtheorem{prop}[rem]{Proposition}
\newtheorem{lem}[rem]{Lemma}
\newtheorem{theo}[rem]{Theorem}
\newtheorem{cor}[rem]{Corollary}
\numberwithin{equation}{section}
\begin{document}

\begin{frontmatter}
\pretitle{Research Article}

\title{On a linear functional for infinitely divisible moving average random fields}

\author{\inits{S.}\fnms{Stefan}~\snm{Roth}\ead[label=e1]{stefan.roth@alumni.uni-ulm.com}}
\address{\institution{Institute of Stochastics}, Helmholtzstra\ss e 18, 89081 Ulm, \cny{Germany}}



\markboth{S. Roth}{On a linear functional for infinitely divisible moving average random fields}

\begin{abstract}
Given a low-frequency sample of the infinitely divisible moving average
random field $\{ \int _{\mathbb{R}^{d}} f(t-x) \Lambda (dx), \ t
\in \mathbb{R}^{d} \}$, in~\cite{GlRothSpo017} we proposed an
estimator $\widehat{uv_{0}}$ for the function $\mathbb{R}\ni x \mapsto
u(x)v_{0}(x) = (uv_{0})(x)$, with $u(x) = x$ and $v_{0}$ being the
L\'{e}vy density of the integrator random measure $\Lambda $. In this
paper, we study asymptotic properties of the linear functional
$L^{2}(\mathbb{R}) \ni v \mapsto \left \langle v, \widehat{uv_{0}} \right \rangle _{L^{2}(\mathbb{R})}$, if the (known) kernel function $f$ has a compact
support. We provide conditions that ensure consistency (in mean) and
prove a central limit theorem for it.
\end{abstract}
\begin{keywords}
\kwd{Infinitely divisible random measure}
\kwd{stationary random field}
\kwd{L\'{e}vy process; moving average}
\kwd{L\'{e}vy density}
\kwd{Fourier transform}
\kwd{central limit theorem}
\end{keywords}

\received{\sday{25} \smonth{10} \syear{2018}}
\revised{\sday{23} \smonth{6} \syear{2019}}
\accepted{\sday{13} \smonth{9} \syear{2019}}
\publishedonline{\sday{22} \smonth{10} \syear{2019}}

\end{frontmatter}

\section{Introduction}%
\label{sect:Int}
Consider a stationary infinitely divisible indepently scattered random
measure\index{random ! measure} $\Lambda $ whose L\'{e}vy density\index{L\'{e}vy ! density} is denoted by $v_{0}$. For
some (known) $\Lambda $-integrable function $f:\mathbb{R}^{d} \to
\mathbb{R}$ with a compact support, let
%
\begin{equation}
\label{eq:moving_average_rf}
X = \{ X(t); \ t \in \mathbb{R}^{d} \}, \qquad X(t)=\int _{\mathbb{R}
^{d}} f(t-x) \Lambda (dx)
\end{equation}
be the corresponding moving average random field.\index{random ! field} In our recent preprint
\cite{GlRothSpo017}, we proposed an estimator $\widehat{uv_{0}}$
for the function $\mathbb{R}\ni x \mapsto u(x)v_{0}(x) = (uv_{0})(x)$,
$u(x) = x$, based on low frequency observations $(X(j \Delta ))_{j
\in W}$ of $X$, with $\Delta > 0$ and $W$ a finite subset of
$\mathbb{Z}^{d}$.

A wide class of spatio-temporal processes with the spectral
representation~\eqref{eq:moving_average_rf} is provided by the so-called
ambit random fields,\index{random ! field} where a space-time L\'{e}vy process\index{L\'{e}vy ! process} serves as
integrator. Such processes are, e.g., used to model the growth rate of
tumours, where the spatial component describes the angle between the
center of the tumour cell and the nearest point at its boundary
(cf.~\cite{BarnNielsSchmiegel07,jonsdottir2008}).
Ambit fields cover quite a number of different processes and fields
including Ornstein--Uhlenbeck type and mixed moving average random fields\index{random ! field}
(cf.~\cite{BarnNiels11,BarnNielsSchmiegel04}). A
further interesting application of~\eqref{eq:moving_average_rf} is given
in~\cite{Karcher12}, where the author uses infinitely divisible
moving average random fields\index{random ! field} in order to model claims of natural
disaster insurance within different postal code areas.

We point out that there is a large number of literature concerning
estimation of the L\'{e}vy density $v_{1}$\index{L\'{e}vy ! density} (its L\'{e}vy measure,
respectively) in the case when $X$ is a L\'{e}vy process\index{L\'{e}vy ! process}
(cf.~\cite{belo2010,comte,comte1,gugushvili,neumann}).
Moreover, in the recent paper~\cite{BelPanWoern16} the authors
provide an estimator for the L\'{e}vy density $v_{0}$\index{L\'{e}vy ! density} of the integrator
L\'{e}vy process\index{L\'{e}vy ! process} $\{L_{s}\}$ of a moving average process $X(t) =
\int _{\mathbb{R}} f(t-s) dL_{s}$, $t \in \mathbb{R}$, which covers the
case $d=1$ in \eqref{eq:moving_average_rf}. For a discussion on the
differences between our approach and the method provided
in~\cite{BelPanWoern16}, we refer to~\cite{GlRothSpo017}
and~\cite{KaRoSpoWalk18}.

In this paper, we investigate asymptotic properties of the linear
functional $L^{2}(\mathbb{R}) \ni v \mapsto \hat{\mathcal{L}}_{W} v =
\left \langle v, \widehat{uv_{0}} \right \rangle _{L^{2}(\mathbb{R})}$
as the sample size $|W|$ tends to infinity. It is motivated by the paper
of Nickl and Reiss~\cite{nickl}, where the authors provide a
Donsker type theorem for the L\'{e}vy measure of pure jump L\'{e}vy
processes.\index{L\'{e}vy ! process} Since our observations are $m$-dependent, the classical
i.i.d. theory does not apply here. Instead, we combine results of Chen
and Shao~\cite{Chen2004} for $m$-dependent random fields and ideas
of Bulinski and Shashkin~\cite{Bulinski07} with exponential
inequalities for weakly dependent random fields\index{random ! field} (see e.g.
\cite{Heinrich,dedecker}) in order to prove our limit
theorems.

It turns out that under certain regularity assumptions on $uv_{0}$,
$\hat{\mathcal{L}}_{W} v$ is a mean consistent estimator for
$\mathcal{L}v = \left \langle v, uv_{0}\right \rangle _{L^{2}(
\mathbb{R})}$ with a rate of convergence given by
$\mathcal{O}(|W|^{-1/2})$, for any $v$ that belongs to a subspace
$\mathcal{U}$ of $L^{1}(\mathbb{R}) \cap L^{2}(\mathbb{R})$. Moreover,
we give conditions such that finite dimensional distributions of the
process $\{ |W|^{1/2} (\hat{\mathcal{L}}_{W} - \mathcal{L})v; \ v
\in \mathcal{U}\}$ are asymptotically Gaussian as $|W|$ is regularly
growing\index{regularly growing} to infinity.

From a practical point of view, a naturally arising question is wether
a proposed model for $v_{0}$ (or equivalently $uv_{0}$) is suitable.
Knowledge of the asymptotic distribution of $|W|^{1/2} (\hat{\mathcal{L}}
_{W} - \mathcal{L})$ can be used in order to construct tests for
different hypotheses, e.g., on regularity assumptions of the model for
$v_{0}$. Indeed,
the scalar
product $\left \langle \, \cdot \,, \, \cdot \,\right \rangle _{L^{2}(
\mathbb{R})}$ naturally induces that the class $\mathcal{U}$ of test functions is
growing, when $uv_{0}$ becomes more regular.

This paper is organized as follows. In Section~\ref{sec:preliminaries},
we give a brief overview of regularly growing\index{regularly growing} sets and infinitely
divisible moving average random fields.\index{random ! field} We further recall some notation
and the most frequently used results from~\cite{GlRothSpo017}.
Section~\ref{section:clt} is devoted to asymptotic properties of
$\hat{\mathcal{L}}_{W}$. Here we discuss our regularity assumptions and
state the main results of this paper
(Theorems~\ref{cor:order_of_convergence} and~\ref{theo:clt_univariate}).
Section~\ref{proof:clt_univariate} is dedicated to the proofs of our
limit theorems. Some of the shorter proofs as well as external results
that will frequently be used in Section~\ref{section:clt} are moved to
Appendix.

\section{Preliminaries}%
\label{sec:preliminaries}

\subsection{Notation}%
\label{subsec:notation}
Throughout this paper, we use the following notation.

By $\mathcal{B}(\mathbb{R}^{d})$ we denote the Borel $\sigma $-field on
the Euclidean space $\mathbb{R}^{d}$. The Lebesgue measure on
$\mathbb{R}^{d}$ is denoted by $\nu _{d}$ and we shortly write
$\nu _{d}(dx) = dx$ when we integrate w.r.t. $\nu _{d}$. For any
measurable space $(M, \mathcal{M}, \mu )$ we denote by $L^{\alpha }(M)$,
$1 \leq \alpha < \infty $, the space of all $\mathcal{M}|\mathcal{B}(
\mathbb{R})$-\xch{measurable}{mesurable} functions $f:M \rightarrow \mathbb{R}$ with
$\int _{M} |f|^{\alpha }(x) \mu (dx) < \infty $. Equipped with the norm
$||f||_{L^{\alpha }(M)} = \left ( \int _{M} |f|^{\alpha }(x) \mu (dx) \right )
^{1/\alpha }$, $L^{\alpha }(M)$ becomes a Banach space and, in the
case $\alpha =2$, even a Hilbert space with scalar product $\left \langle f,g
\right \rangle _{L^{\alpha }(M)}= \int _{M} f(x)g(x)\mu (dx)$, for any
$f,g \in L^{2}(M)$. With $L^{\infty }(M)$ (i.e. if $\alpha = \infty $)
we denote the space of all real-valued bounded functions on $M$. In the case
$(M, \mathcal{M}, \mu ) = (\mathbb{R}, \mathcal{B}(\mathbb{R}), \nu
_{1})$ we denote by
\begin{equation*}
H^{\delta }(\mathbb{R}) =\Bigl \{ f \in L^{2}(\mathbb{R}): \ \int _{
\mathbb{R}} |\mathcal{F}_{+} f|^{2} (x)(1+x^{2})^{\delta } dx <\infty
\Bigr\}
\end{equation*}
the Sobolev space of order $\delta > 0$ equipped with the Sobolev norm
$||f||_{H^{\delta }(\mathbb{R})} = ||\mathcal{F}_{+} f(\cdot ) (1+
\cdot ^{2})^{\delta /2}||_{L^{2}(\mathbb{R})}$, where $\mathcal{F}_{+}$
is the Fourier transform on $L^{2}(\mathbb{R})$. For $f \in L^{1}(
\mathbb{R})$, $\mathcal{F}_{+} f$ is defined by $\mathcal{F}_{+} f (x)
= \int _{\mathbb{R}}e^{itx}f(t)dt$, $x \in \mathbb{R}$. Throughout the
rest of this paper $(\Omega , \mathcal{A}, \mathbb{P})$ denotes a
probability space. Note that in this case $L^{\alpha }(\Omega )$ is the
space of all random variables with finite $\alpha $-th moment. For an
arbitrary set $A$ we introduce 
the notation $
\textup{card}(A)$ or briefly $|A|$ for its cardinality. Let
$\supp(f)=\{ x\in \mathbb{R}^{d}: f(x)\neq 0\}$ be the support
set of a function $f: \mathbb{R}^{d}\to \mathbb{R}$. Denote by
$\textup{diam}(A)=\sup \{ \| x-y \|_{\infty }: x,y\in A \}$ the diameter
of a bounded set $A\subset \mathbb{R}^{d}$.

\subsection{Regularly growing\index{regularly growing} sets}%
\label{subsec:regularly_growing_sets}
In this \xch{section}{secion}, we briefly recall some basic facts about \textit{regularly
growing sets}. For a more detailed investigation on this topic, see,
e.g.,~\cite{Bulinski07}.

Let $a = (a_{1}, \dots , a_{d}) \in \mathbb{R}^{d}$ be a vector with
positive components. In the sequel, we shortly write $a > 0$ in this
situation. Moreover, let
\begin{equation*}
\Pi _{0}(a) = \{ x \in \mathbb{R}^{d}, \ 0 < x_{i} \leq a_{i}, \ i=1,
\dots ,d \}
\end{equation*}
and define for any $j \in \mathbb{Z}^{d}$ the \textit{shifted block}
$\Pi _{j}(a)$ by
\begin{equation*}
\Pi _{j}(a) = \Pi _{0}(a) + j a = \{ x \in \mathbb{R}^{d}, \ j_{i} a
_{i} < x_{i} \leq j_{i} (a_{i} + 1), \ i=1,\dots ,d \}.
\end{equation*}
Clearly $\{ \Pi _{j}, \ j \in \mathbb{Z}^{d} \}$ forms a partition of
$\mathbb{R}^{d}$. For any $U \subset \mathbb{Z}^{d}$, introduce the sets
\begin{equation*}
\begin{split}
J_{-}(U, a) = \{ j \in \mathbb{Z}^{d}, \ \Pi _{j}(a) \subset U \},
\qquad
& J_{+}(U, a) = \{ j \in \mathbb{Z}^{d}, \ \Pi _{j}(a) \cap U
\neq \emptyset \}
\\
U^{-}(a) = \bigcup \limits _{j\in J_{-}(U, a)} \Pi _{j}(a), \qquad
& U
^{+}(a) = \bigcup \limits _{j\in J_{+}(U, a)} \Pi _{j}(a).
\end{split}
\end{equation*}
A sequence of sets $U_{n} \subset \mathbb{R}^{d}$ ($n \in \mathbb{N}$)
\textit{tends to infinity in Van Hove sense} or shortly is
\textit{VH-growing}, if for any $a > 0$
\begin{equation*}
\nu _{d}(U_{n}^{-}) \to \infty \quad \text{and} \quad \frac{\nu _{d}(U
_{n}^{-})}{\nu _{d}(U_{n}^{+})} \to 1 \quad \text{as} \ n \to \infty .
\end{equation*}
For a finite set $A \subset \mathbb{Z}^{d}$, define by $\partial A =
\{ j \in \mathbb{Z}^{d} \backslash A, \ \dist (j,A) = 1 \}$ its
boundary, where $\dist (j,A) = \inf \{ \|j-x\|_{\infty }, \ x \in A
\}$.

A sequence of finite sets $A_{n} \in \mathbb{Z}^{d}$ ($n \in
\mathbb{N}$) is called \textit{regularly growing\index{regularly growing} (to infinity)}, if
\begin{equation*}
|A_{n}| \to \infty , \quad \text{and} \quad \frac{|\partial A_{n}|}{|A
_{n}|} \to 0, \quad \text{as} \ n \to \infty .
\end{equation*}
%
\begin{rem}
Regular growth of a family $A_{n} \subset \mathbb{Z}^{d}$ means that the
number of points in the boundary of $A_{n}$ grows significantly slower
than the number of its interior points.
\end{rem}
The following result that connects regularly and VH-growing sequences
can be found in~\cite[p.174]{Bulinski07}.
%
\begin{lem}
\label{lem:VH_growing_regularly_growing}
\begin{enumerate}%
\item
Let $U_{n} \subset \mathbb{R}^{d}$ ($n \in \mathbb{N}$) be VH-growing.
Then $V_{n} = U_{n} \cap \mathbb{Z}^{d}$ ($n \in \mathbb{N}$) is
regularly growing\index{regularly growing} to infinity.
\item
If $(U_{n})_{n\in \mathbb{N}}$ is a sequence of finite subsets of
$\mathbb{Z}^{d}$, regularly growing\index{regularly growing} to infinity, then $V_{n} =
\cup _{j \in U_{n}} [j,j+1)$ is VH-\xch{growing}{grwoing}, where $[j,j+1) = \{ x
\in \mathbb{R}^{d}: \ j_{k} \leq x_{k} < j_{k} + 1, \ k=1,\dots ,d \}$.
\end{enumerate}
\end{lem}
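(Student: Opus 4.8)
The plan is to prove the two implications by comparing cardinalities of lattice sets with Lebesgue volumes of the associated block unions; the bridge in both directions is the elementary fact that a "shell" of bounded thickness around the boundary of a set is negligible compared with the set itself.

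\emph{Regular growth $\Rightarrow$ VH growth (part 2).} Here $V_n=\bigcup_{j\in U_n}[j,j+1)$ is a union of $|U_n|$ unit cubes, so $V_n^-(a)\subseteq V_n\subseteq V_n^+(a)$ gives at once $\nu_d(V_n^-(a))\le|U_n|\le\nu_d(V_n^+(a))$. The core estimate is that the number of blocks "straddling" $V_n$, namely $|J_+(V_n,a)\setminus J_-(V_n,a)|$, is at most $C(a)\,|\partial U_n|$ for a constant $C(a)$ depending only on $a$ and $d$. Indeed, a straddling block $\Pi_k(a)$ meets some cube $[j,j+1)$ with $j\in U_n$ and some cube $[j',j'+1)$ with $j'\notin U_n$; since $\Pi_k(a)$ has $\|\cdot\|_\infty$-diameter $\le\|a\|_\infty$, one gets $\dist(j,\mathbb{Z}^d\setminus U_n)\le\|a\|_\infty+1=:r$, each such $j$ is met by at most $\prod_i(1/a_i+2)$ blocks, and — by a one-coordinate-at-a-time lattice path argument — the inner shell $\{j\in U_n:\dist(j,\mathbb{Z}^d\setminus U_n)\le r\}$ is contained in the $dr$-neighbourhood of $\partial U_n$, hence has at most $(2dr+1)^d|\partial U_n|$ points. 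Consequently $\nu_d(V_n^+(a))-\nu_d(V_n^-(a))=(\prod_i a_i)\,|J_+\setminus J_-|\le(\prod_i a_i)C(a)|\partial U_n|=o(|U_n|)$, so $\nu_d(V_n^-(a))\ge|U_n|-o(|U_n|)\to\infty$ and $\nu_d(V_n^-(a))/\nu_d(V_n^+(a))\to1$.

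\emph{VH growth $\Rightarrow$ regular growth (part 1).} Take $a=(1,\dots,1)$. Each $j\in J_-(U_n,a)$ satisfies $j+(1,\dots,1)\in V_n$, and each $p\in V_n$ lies in $\Pi_{p-(1,\dots,1)}(a)$, so that $p-(1,\dots,1)\in J_+(U_n,a)$; these injections yield $\nu_d(U_n^-(a))\le|V_n|\le\nu_d(U_n^+(a))$, and together with the VH hypothesis this forces $|V_n|\to\infty$ with $|V_n|$ comparable to $\nu_d(U_n^\pm(a))$. For the boundary, fix $j\in\partial V_n$: it has a neighbour $j'\in V_n\subseteq U_n$, and since $j\notin U_n$ the segment $[j',j]$ meets the topological boundary $\partial U_n$ within $\|\cdot\|_\infty$-distance $1$ of $j'$. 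The map $j\mapsto j'$ is at most $(3^d-1)$-to-one, hence $|\partial V_n|\le(3^d-1)\,|\{p\in V_n:\dist(p,\partial U_n)\le1\}|$. Finally, for a fixed coarser scale $a'=(3,\dots,3)$ every such $p$ lies within a bounded $\|\cdot\|_\infty$-distance of a block of scale $a'$ that contains both a point of $U_n$ and a point of its complement, i.e. a block indexed by $J_+(U_n,a')\setminus J_-(U_n,a')$; since each block contains only boundedly many lattice points, $|\{p\in V_n:\dist(p,\partial U_n)\le1\}|\le c(a')\,(\prod_i a'_i)^{-1}\bigl(\nu_d(U_n^+(a'))-\nu_d(U_n^-(a'))\bigr)=o(\nu_d(U_n^-(a')))=o(|V_n|)$ by the VH property. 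Therefore $|\partial V_n|/|V_n|\to0$.

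\emph{Expected main obstacle.} The two implications themselves are soft once the geometry is set up; the delicate bookkeeping is the uniform statement that a shell of fixed thickness around $\partial U_n$ — in the lattice sense for part 2, in the topological sense for part 1 — contains only $O(|\partial U_n|)$, respectively $O(|J_+\setminus J_-|)$, lattice points, and the fact that in part 1 the sets $U_n$ are arbitrary bounded Borel subsets of $\mathbb{R}^d$, so one must argue through their topological boundary rather than any cubical structure. All the measure comparisons are otherwise routine; full details may be found in~\cite[p.~174]{Bulinski07}.
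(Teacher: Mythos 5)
The paper itself gives no proof of this lemma (it is quoted from \cite[p.~174]{Bulinski07}), so the only question is whether your argument is sound. Your part 2 is essentially correct: up to harmless constants (your $r=\|a\|_\infty+1$ should be $\|a\|_\infty+2$, since you also lose up to $1$ inside each of the two unit cubes), the estimate $|J_+(V_n,a)\setminus J_-(V_n,a)|\lesssim |\partial U_n|$ and the VH property follow as you describe. Part 1, however, has a genuine gap at the inequality $|\{p\in V_n:\dist(p,\partial U_n)\le 1\}|\le c(a')(\prod_i a_i')^{-1}\big(\nu_d(U_n^+(a'))-\nu_d(U_n^-(a'))\big)$ with the single fixed scale $a'=(3,\dots,3)$. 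The geometric claim behind it — that every lattice point of $U_n$ within distance $1$ of the topological boundary of $U_n$ lies near a block of scale $a'$ meeting both $U_n$ and its complement — fails whenever that boundary runs along block faces: the nearby points of $U_n$ and of $U_n^c$ then sit in \emph{different} blocks, and there need be no straddling block at all. Concretely, $U_n=(0,3n]^d$ is VH-growing and is an exact union of blocks of scale $3$, so $J_+(U_n,a')=J_-(U_n,a')$ and your right-hand side is $0$, while the left-hand side is of order $n^{d-1}$ (positive in any case); moreover $\partial V_n\neq\emptyset$ here although there are no straddling blocks at scale $1$ or $3$, so no bound of the form $|\partial V_n|\le C(a)\,|J_+(U_n,a)\setminus J_-(U_n,a)|$ at one fixed scale can possibly work.

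The standard repair is a two-scale argument with the coarse scale sent to infinity \emph{after} $n$. Fix $R\in\mathbb{N}$, set $a_R=(R,\dots,R)$, and for $j\in\partial V_n$ pick a neighbour $j'\in V_n$; let $B'$ be the scale-$R$ block containing $j'$ (it meets $U_n$). Either $B'$ is straddling, and each straddling block can serve at most $(3^d-1)R^d$ such $j$; or $B'\subset U_n$, in which case $j\notin B'$ forces $j'$ to lie within distance $1$ of the faces of $B'$, and each block contained in $U_n$ has at most $2dR^{d-1}$ such lattice points while contributing $R^d$ points to $V_n$. This yields $|\partial V_n|\le(3^d-1)\big(\nu_d(U_n^+(a_R))-\nu_d(U_n^-(a_R))+2dR^{-1}|V_n|\big)$; dividing by $|V_n|\ge R^d|J_-(U_n,a_R)|=\nu_d(U_n^-(a_R))$ and using the VH property at scale $a_R$ gives $\limsup_n |\partial V_n|/|V_n|\le (3^d-1)2d/R$, and letting $R\to\infty$ finishes part 1. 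Your preliminary bounds $\nu_d(U_n^-((1,\dots,1)))\le |V_n|\le \nu_d(U_n^+((1,\dots,1)))$, hence $|V_n|\to\infty$, are correct and can be kept as they are.
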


\subsection{Infinitely divisible random measures\index{infinitely divisible random measure}}%
\label{subsec:reminder}
In what follows, denote by $\mathcal{E}_{0}(\mathbb{R}^{d})$ the collection
of all bounded Borel sets in $\mathbb{R}^{d}$.

Suppose that $\Lambda = \{\Lambda (A); \ A \in \mathcal{E}_{0}(\mathbb{R}
^{d})\}$ is an infinitely divisible random measure\index{infinitely divisible random measure} on some
probability space $(\Omega , \mathcal{A}, P)$, i.e. a random measure\index{random ! measure}
with the following properties:
\begin{enumerate}%
\item[(a)] Let $(E_{m})_{m\in \mathbb{N}}$ be a sequence of disjoint
sets in $\mathcal{E}_{0}(\mathbb{R}^{d})$. Then the sequence
$(\Lambda (E_{m}))_{m\in \mathbb{N}}$ consists of independent random
variables; if, in addition,\break  $\cup _{m=1}^{\infty }E_{m} \in
\mathcal{E}_{0}(\mathbb{R}^{d})$, then we have $\Lambda (\cup _{m=1}
^{\infty }E_{m}) =\sum _{m=1}^{\infty }\Lambda (E_{m})$ almost surely.
\item[(b)] The random variable $\Lambda (A)$ has an infinitely divisible
distribution for any choice of $A \in \mathcal{E}_{0}(\mathbb{R}^{d})$.
\end{enumerate}

For every $A\in \mathcal{E}_{0}(\mathbb{R}^{d})$, let $
\varphi _{\Lambda (A)}$ denote the characteristic function\index{characteristic function} of the random
variable $\Lambda (A)$. Due to the infinite divisibility of the random
variable $\Lambda (A)$, the characteristic function\index{characteristic function} $\varphi _{\Lambda
(A)}$ has a L\'{e}vy--Khintchin representation which can, in its most
general form, be found in \cite[p. 456]{Rajput}. Throughout the
rest of the paper we make the additional assumption that the
L\'{e}vy--Khintchin representation of $\Lambda (A)$ is of a special form,
namely
\begin{equation*}
\varphi _{\Lambda (A)}(t) = \exp \left \lbrace \nu _{d}(A) K(t)
\right \rbrace , \quad A \in \mathcal{E}_{0}(\mathbb{R}^{d}),
\end{equation*}
with
%
\begin{equation}
\label{eq:K}
K(t) = ita_{0} - \frac{1}{2} t^{2} b_{0} + \int \limits _{\mathbb{R}}
\left ( e^{itx} - 1 - itx \Eins_{[-1,1]}(x) \right )v_{0}(x)dx,
\end{equation}
where $\nu _{d}$ denotes the Lebesgue measure on $\mathbb{R}^{d}$,
$a_{0}$ and $b_{0}$ are real numbers with $0 \leq b_{0} < \infty $ and
$v_{0}: \mathbb{R}\to \mathbb{R}$ is a L\'{e}vy density,\index{L\'{e}vy ! density} i.e. a
measurable function which \xch{fulfills}{fulfils} $\int _{\mathbb{R}} \min \{1,x^{2}\}v
_{0}(x)dx < \infty $. The triplet $(a_{0},b_{0},v_{0})$ will be referred
to as the \emph{L\'{e}vy characteristic\index{L\'{e}vy ! characteristic}} of $\Lambda $. It uniquely
determines the distribution of $\Lambda $. This particular structure of\vadjust{\goodbreak}
the characteristic functions\index{characteristic function} $\varphi _{\Lambda (A)}$ means that the
random measure $\Lambda $\index{random ! measure} is stationary with the control measure
$\lambda : \mathcal{B}(\mathbb{\mathbb{R}}) \rightarrow [0,\infty )$
given by
\begin{equation*}
\lambda (A) = \nu _{d}(A) \left [ |a_{0}| + b_{0} + \int
\limits _{\mathbb{R}} \min \{1,x^{2}\} v_{0}(x)dx \right ]
\quad
\text{for all } A \in \mathcal{E}_{0}(\mathbb{R}^{d}).
\end{equation*}

Now one can define the stochastic integral with respect to the
infinitely divisible random measure $\Lambda $\index{infinitely divisible random measure} in the following way:
\begin{enumerate}%
\item
Let $f = \sum _{j=1}^{n} x_{j} \Eins_{A_{j}}$ be a real simple
function on $\mathbb{R}^{d}$, where $A_{j} \in \mathcal{E}_{0}(
\mathbb{R}^{d})$ are pairwise disjoint. Then for every $A \in
\mathcal{B}(\mathbb{R}^{d})$ we define
\begin{equation*}
\int \limits _{A}f(x)\Lambda (dx) = \sum \limits _{j=1}^{n} x_{j} \Lambda
(A \cap A_{j}).
\end{equation*}%
\item
A measurable function $f:(\mathbb{R}^{d},\mathcal{B}(\mathbb{R}^{d}))
\rightarrow (\mathbb{R}, \mathcal{B}(\mathbb{R}))$ is said to be
$\Lambda $-inte\-grable if there exists a sequence $(f^{(m)})_{m \in
\mathbb{N}}$ of simple functions as in (1) such that $f^{(m)} \rightarrow
f$ holds $\lambda $-almost everywhere and such that, for each
$A \in \mathcal{B}(\mathbb{R}^{d})$, the sequence $\left ( \int _{A} f
^{(m)}(x)\Lambda (dx) \right )_{m \in \mathbb{N}}$ converges in
probability as $m \rightarrow \infty $. In this case we set
\begin{equation*}
\int \limits _{A} f(x) \Lambda (dx) = \plim\limits _{m\rightarrow \infty
} \int \limits _{A} f^{(m)}(x)\Lambda (dx).
\end{equation*}
\end{enumerate}

A useful characterization of the $\Lambda $-integrability of a function
$f$ is given in \cite[Theorem 2.7]{Rajput}. Now let $f:
\mathbb{R}^{d} \rightarrow \mathbb{R}$ be $\Lambda $-integrable; then
the function $f(t - \cdot )$ is $\Lambda $-integrable for every
$t\in \mathbb{R}^{d}$ as well. We define the moving average random field
$X = \{X(t), \ t \in \mathbb{R}^{d}\}$ by
%
\begin{equation}
\label{eq:moving_average}
X(t) = \int \limits _{\mathbb{R}^{d}} f(t-x)\Lambda (dx), \quad t
\in \mathbb{R}^{d}.
\end{equation}
Recall that a random field\index{random ! field} is called \emph{infinitely divisible} if its
finite dimensional distributions are infinitely divisible. The random
field\index{random ! field} $X$ above is (strictly) stationary and infinitely divisible and
the characteristic function $\varphi _{X(0)}$\index{characteristic function} of $X(0)$ is given by
\begin{equation*}
\varphi _{X(0)}(u) = \exp \left ( \int _{\mathbb{R}^{d}} K(uf(s)) \:
\mathrm{d}s \right ),
\end{equation*}
where $K$ is the function from~\eqref{eq:K}. The argument $
\int _{\mathbb{R}^{d}} K(uf(s)) \:\mathrm{d}s$ in the above exponential
function can be shown to have a similar structure as $K(t)$; more
precisely, we have
%
\begin{equation}
\label{eq:exponent_psi}
\int _{\mathbb{R}^{d}} K(uf(s)) \:\mathrm{d}s = i u a_{1} -
\frac{1}{2}u^{2} b_{1} + \int \limits _{\mathbb{R}} \left (e^{iux}-1-iux
\Eins_{[-1,1]}(x)\right )v_{1}(x) \:\mathrm{d}x
\end{equation}
where $a_{1}$ and $b_{1}$ are real numbers with $b_{0} \geq 0$ and the
function $v_{1}$ is the L\'{e}vy density\index{L\'{e}vy ! density} of $X(0)$. The triplet
$(a_{1},b_{1},v_{1})$ is again referred to as the
\emph{L\'{e}vy characteristic\index{L\'{e}vy ! characteristic}} (of $X(0)$) and determines the
distribution of $X(0)$ uniquely. A simple computation shows that the
triplet $(a_{1}, b_{1}, v_{1})$ is given by the formulas
%
\begin{align}
& a_{1} = \int \limits _{\mathbb{R}^{d}}U(f(s)) \:\mathrm{d}s,
\qquad
b_{1} = b_{0} \int \limits _{\mathbb{R}^{d}} f^{2}(s) \:\mathrm{d}s,
\nonumber
\\
& v_{1}(x) = \int \limits _{\supp(f)} \frac{1}{|f(s)|}v_{0}
\left ( \frac{x}{f(s)} \right ) \:\mathrm{d}s ,
\label{eq:levy-characterisitic-of-field}
\end{align}
where $\supp(f) := \{s \in \mathbb{R}^{d}: \ f(s) \neq 0\}$
denotes the support of $f$ and the function $U$ is defined via
\begin{equation*}
U(u) = u \left ( a_{0} + \int _{\mathbb{R}} x \left [
\Eins_{[-1,1]}(ux)- \Eins_{[-1,1]}(x) \right ] v_{0}(x)\:
\mathrm{d}x \right ).
\end{equation*}
Note that the $\Lambda $-integrability of $f$ immediately implies that
$f \in L^{1}(\mathbb{R}^{d}) \cap L^{2}(\mathbb{R}^{d})$. Hence, all
integrals above are finite.

For details on the theory of infinitely divisible measures and fields
we refer the interested reader to \cite{Rajput}.

\subsection{A plug-in estimation approach for $\mathbf{v_{0}}$}%
\label{subsec:plug-in}
Let the random field $X = \{X(t), \ t\in \mathbb{R}^{d} \}$ be given as
in Section~\ref{subsec:reminder} and define the function $u:
\mathbb{R}\to \mathbb{R}$ by $u(x) = x$. Suppose further that an estimator
$\widehat{uv_{1}}$ for $uv_{1}$ is given. In our recent
preprint~\cite{GlRothSpo017}, we provided an estimation approach
for $uv_{0}$ based on relation~\eqref{eq:levy-characterisitic-of-field}
which we briefly recall in this section. Therefore, quite a number of
notations are required.

Assume that $f$ satisfies the integrability condition
%
\begin{equation}
\label{eq:integrability_condition_f}
\int _{\supp(f(s))} |f(s)|^{1/2} ds < \xch{\infty ,}{\infty .}
\end{equation}
and define the operator $\mathcal{G}: L^{2}(\mathbb{R}) \to L^{2}(
\mathbb{R})$ by
\begin{equation*}
\mathcal{G}v = \int _{\supp(f)} \sgn (f(s)) v \Big ( \frac{\,
\cdot \,}{f(s)} \Big ) ds, \quad v \in L^{2}(\mathbb{R}).
\end{equation*}
Moreover, define the isometry $\mathcal{M}: L^{2}(\mathbb{R}) \to L
^{2}(\mathbb{R}^{\times }, \frac{dx}{|x|})$ by
\begin{equation*}
(\mathcal{M}v)(x) = |x|^{1/2} v(x), \quad v \in L^{2}(\mathbb{R}),
\end{equation*}
and let the functions $m_{f,\pm }:\mathbb{R}^{\times }\to \mathbb{C}$
and $\mu _{f}:\mathbb{R}^{\times }\to \mathbb{C}$ be given by
\begin{equation*}
\begin{split}
m_{f,+}(x)
& = \int _{\supp(f)} \sgn (f(s)) |f(s)|^{1/2} e^{-i
x \log |f(s)|} ds,
\\
m_{f,-}(x)
& = \int _{\supp(f)} |f(s)|^{1/2} e^{-i x
\log |f(s)|} ds,
\\
\mu _{f}(y)
& =
\begin{cases}
m_{f,+}(\log |y|)
& \text{if} \ y > 0,
\\
m_{f,-}(\log |y|)
& \text{if} \ y < 0.
\end{cases}
\end{split}
\end{equation*}
Multiplying both sides in~\eqref{eq:levy-characterisitic-of-field} by
$u$ leads to the equivalent relation
%
\begin{equation}
\label{eq:operator_equation}
uv_{1} = \mathcal{G}uv_{0}.
\end{equation}
Suppose $uv_{1} \in L^{2}(\mathbb{R})$ and assume that for some
$\beta \geq 0$,
%
\begin{equation}
\label{eq:existence_uniqueness_condition}
|m_{f,\pm }(x)| \gtrsim \frac{1}{1 + |x|^{\beta }}, \quad
\text{for all} \ x \in \mathbb{R}\tag{$\mathbf{U}_{\beta }$}.
\end{equation}
Then the unique solution $uv_{0} \in L^{2}(\mathbb{R})$ to
equation~\eqref{eq:operator_equation} is given by
\begin{equation*}
uv_{0} = \mathcal{G}^{-1} uv_{1} = \mathcal{M}^{-1} \mathcal{F}_{
\times }^{-1} \Big ( \frac{1}{\mu _{f}} \mathcal{F}_{\times }
\mathcal{M}uv_{1} \Big ),
\end{equation*}
cf.~\cite[Theorem 3.1]{GlRothSpo017}. Based on this relation, the
paper~\cite{GlRothSpo017} provides the estimator
%
\begin{equation}
\label{eq:Gi_n_inverse}
\widehat{uv_{0}} = \mathcal{M}^{-1} \mathcal{F}_{\times }^{-1} \Big ( \frac{1}{
\mu _{f,n}} \mathcal{F}_{\times }\mathcal{M}\widehat{uv_{1}} \Big ) =:
\mathcal{G}_{n}^{-1} \widehat{uv_{1}}
\end{equation}
for $uv_{0}$, where $(a_{n})_{n\in \mathbb{N}} \subseteq (0,\infty )$
is an arbitrary sequence, depending on the sample size $n$, that
tends to $0$ as $n \to \infty $, and the mapping $
\frac{1}{\mu _{f,n}}: \mathbb{R}\to \mathbb{C}$ is defined by
$\frac{1}{\mu _{f,n}} := \frac{1}{\mu _{f}} \Eins _{ \{ |\mu _{f}| >
a_{n} \} }$. Here $\mathcal{F}_{\times }:L^{2}(\mathbb{R}^{\times },
\frac{dx}{|x|}) \to L^{2}(\mathbb{R}^{\times }, \frac{dx}{|x|})$ denotes
the Fourier transform on the multiplicative group $\mathbb{R}^{\times
}$ which is defined by
\begin{equation*}
(\mathcal{F}_{\times }u)(y) = \int _{\mathbb{R}^{\times }} u(x) \; e
^{-i\log \lvert x\rvert \cdot \log \lvert y \rvert } \cdot e^{i\pi
\delta (x) \delta (y)} \, \frac{\mathrm{d}x}{\lvert x\rvert },
\end{equation*}
for all $u \in L^{1}(\mathbb{R}^{\times }, \frac{dx}{|x|}) \cap L^{2}(
\mathbb{R}^{\times }, \frac{dx}{|x|})$, with $\delta :\mathbb{R}^{
\times }\to \mathbb{R}$ given by $\delta (x)
=\Eins_{(-\infty ,0)}(x)$ (cf.~\cite[Section
2.2]{GlRothSpo017}). A more detailed introduction to harmonic analysis
on locally compact abelian groups can be found, e.g.,
in~\cite{deitmar2009}.
%
\begin{rem}
The linear operator $\mathcal{G}_{n}^{-1} : L^{2}(\mathbb{R}) \to L
^{2}(\mathbb{R})$ defined in~\eqref{eq:Gi_n_inverse} is bounded in the
operator norm $\|\mathcal{G}_{n}^{-1}\| \leq \frac{1}{a_{n}}$, whereas
$\mathcal{G}^{-1}$ is unbounded in general.
\end{rem}

\subsection{$m$-dependent random fields}

A random field\index{random ! field} $X = \{ X(t), \ t \in T \}$, $T \subseteq \mathbb{R}
^{d}$, defined on some probability space $(\Omega , \mathcal{A},
\mathbb{P})$ is called $m$-\textit{dependent} if for some $m \in
\mathbb{N}$ and any finite subsets $U$ and $V$ of $T$ the random vectors
$(X(u))_{u \in U}$ and $(X(v))_{v \in V}$ are independent whenever
\begin{equation*}
\|u-v\|_{\infty }= \max \{ |u_{i} - v_{i}|, \ i=1,\dots ,d \} > m,
\end{equation*}
for all $u=(u_{1},\dots ,u_{d})^{\top }\in U$ and $v = (v_{1},\dots ,v
_{d})^{\top }\in V$.
%
\begin{lem}
\label{lem:m_depend_X}
Let the random field $X$ be given in~\eqref{eq:moving_average} and
suppose that $f$ has a compact support. Then $X$ is $m$-dependent with
$m > \textup{diam}(\supp(f))$.
\end{lem}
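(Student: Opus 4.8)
The plan is to exploit the compact support of $f$ to localize the integration region defining $X(t)$, and then to invoke the defining property of $m$-dependence directly. First I would fix $m \in \mathbb{N}$ with $m > \textup{diam}(\supp(f))$; such an $m$ exists because $\supp(f)$ is bounded. The key observation is that for any $t \in \mathbb{R}^d$, the integrand $x \mapsto f(t-x)$ vanishes outside the set $t - \supp(f) := \{t - s : s \in \supp(f)\}$, which is a bounded Borel set of diameter equal to $\textup{diam}(\supp(f))$. Hence $X(t) = \int_{t - \supp(f)} f(t-x)\,\Lambda(dx)$ depends on $\Lambda$ only through its restriction to (bounded Borel subsets of) $t - \supp(f)$.

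Next I would take two finite sets $U, V \subset \mathbb{R}^d$ with $\|u - v\|_\infty > m$ for all $u \in U$, $v \in V$, and show that the "regions of dependence" $\bigcup_{u \in U}(u - \supp(f))$ and $\bigcup_{v \in V}(v - \supp(f))$ are disjoint. Indeed, if some point lay in both, there would exist $u \in U$, $v \in V$ and $s, s' \in \supp(f)$ with $u - s = v - s'$, so $\|u - v\|_\infty = \|s - s'\|_\infty \le \textup{diam}(\supp(f)) < m$, contradicting $\|u - v\|_\infty > m$. Since these two regions are disjoint bounded Borel sets, property (a) of the infinitely divisible random measure $\Lambda$ (independent values on disjoint sets) yields that the families $\{\Lambda(A) : A \subset \bigcup_{u \in U}(u - \supp(f)), A \in \mathcal{E}_0(\mathbb{R}^d)\}$ and the analogous family over $V$ are independent. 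A small technical point to address carefully is that $X(t)$ is defined as a probability limit of integrals of simple functions; one should note that these approximating simple functions can be taken supported in (a neighbourhood of) $t - \supp(f)$, or alternatively argue via the a.s.\ $\sigma$-additivity and independence structure of $\Lambda$ that the stochastic integral over a region is measurable with respect to the $\sigma$-algebra generated by $\Lambda$ on that region. Granting this, $(X(u))_{u \in U}$ is measurable with respect to the first $\sigma$-algebra and $(X(v))_{v \in V}$ with respect to the second, so the two random vectors are independent.

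The main obstacle I anticipate is precisely this measurability bookkeeping: making rigorous the claim that the stochastic integral $\int_{\mathbb{R}^d} f(t-x)\,\Lambda(dx)$, constructed as a $P$-limit of integrals of simple functions, is measurable with respect to $\sigma(\Lambda(A) : A \in \mathcal{E}_0(\mathbb{R}^d),\ A \subseteq t - \supp(f))$. One clean way around it is to first establish independence for the approximating simple-function integrals — which is immediate since a simple function $\sum_j x_j \Eins_{A_j}$ with $A_j \subseteq t-\supp(f)$ integrates to $\sum_j x_j \Lambda(A_j)$, a function of $\Lambda$ on disjoint-from-$V$ sets — and then pass to the limit, using that a $P$-limit of independent random vectors preserves independence of the limiting vectors. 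This avoids any delicate $\sigma$-algebra argument and reduces everything to the elementary disjointness computation above. Finally, collecting these facts shows $X$ satisfies the definition of $m$-dependence for this $m$, completing the proof.
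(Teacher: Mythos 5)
Your proposal is correct and follows essentially the same route as the paper's proof: disjointness of the translated supports $t-\supp(f)$ for points separated by more than $\textup{diam}(\supp(f))$ in the $\|\cdot\|_{\infty}$-norm, combined with the independently scattered property of $\Lambda$. The only difference is that you spell out the measurability/limit-passing step via simple-function approximations and preservation of independence under convergence in probability, which the paper's two-line proof leaves implicit.
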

\begin{proof}
Compactness of $\supp(f)$ implies that $\supp(f(t-
\cdot ))$ and $\supp(f(s-\cdot ))$ are disjoint whenever
$\|t-s\|_{\infty }> \textup{diam}(\supp(f))$. Since further
$\Lambda $ is independently scattered and integration
in~\eqref{eq:moving_average} is done only on $\supp(f(t-
\cdot ))$, $X(t)$ and $X(s)$ are independent for $\|t-s\|_{\infty }>
\textup{diam}(\supp(f))$.
\end{proof}

\section{A linear functional for infinitely divisible moving averages}%
\label{section:clt}

\subsection{The setting}

Let $\Lambda = \{ \Lambda (A), \ A \in \mathcal{E}_{0}(\mathbb{R}^{d})
\}$ be a stationary infinitely divisible random measure\index{infinitely divisible random measure} defined on some
probability space $(\Omega , \mathcal{A}, \mathbb{P})$ with
characteristic triplet 
$(a_{0}, 0, v_{0})$, i.e. $\Lambda $ is
purely non-Gaussian. For a known $\Lambda $-integrable function
$f:\mathbb{R}^{d} \to \mathbb{R}$ let $X = \{X(t) = \int _{\mathbb{R}
^{d}} f(t-x) \Lambda (dx), \ t \in \mathbb{R}^{d} \}$ be the infinitely
divisible moving average random field\index{random ! field} defined in
Section~\ref{subsec:reminder}.

Fix $\Delta > 0$ and suppose $X$ is observed on a regular grid
$\Delta \mathbb{Z}^{d} = \{ j\Delta , \ j \in \mathbb{Z}^{d} \}$ with
the mesh size $\Delta $, i.e. consider the random field\index{random ! field} $Y$ given by
%
\begin{equation}
\label{eq:random_field_Y}
Y = (Y_{j})_{j \in \mathbb{Z}^{d}}, \quad Y_{j} = X(j \Delta ), \ j
\in \mathbb{Z}^{d}.
\end{equation}
For a finite subset $W \subset \mathbb{Z}^{d}$ let $(Y_{j})_{j\in W}$
be a sample drawn from $Y$ and denote by $n$ the cardinality of $W$.

Throughout this paper, for any numbers $a$, $b \geq 0$, we use the
notation $a \lesssim b$ if $a \leq c b$ for some constant $c > 0$.

\begin{ass}
\label{ass:basic_assumptions}
Let the function $u:\mathbb{R}\to \mathbb{R}$ be given by $u(x) = x$.
We make the following assumptions: for some $\tau > 0$
\begin{enumerate}%
\item
$f \in L^{2+\tau }(\mathbb{R}^{d})$ has compact support;
\item
$uv_{0} \in L^{1}(\mathbb{R}) \cap L^{2}(\mathbb{R})$ is bounded;
\item
$\int _{\mathbb{R}} |x|^{1+\tau } |(uv_{0})(x)| dx < \infty $;
\item
$| \int _{\supp(f)} f(s) \mathcal{F}_{+}[uv_{0}](f(s)x) ds |
\lesssim (1+x^{2})^{-1 / 2}$ for all $x \in \mathbb{R}$;
\item
$\exists \ \varepsilon > 0$ such that the function
%
\begin{equation}
\label{eq:abs_psi_condition}
\mathbb{R}\ni x \mapsto \exp \Big (\int _{\supp(f)} \int _{0}
^{f(s)x} \textup{Im}\Big ( \mathcal{F}_{+} [uv_{0}](y) \Big ) dy ds
\Big )
\end{equation}
is contained in $H^{-1 + \varepsilon }(\mathbb{R})$.
\end{enumerate}
\end{ass}
Suppose that $\widehat{uv_{1}}$ is an estimator for $uv_{1}$ (which we
will precisely define in the next section) based on the sample $(Y_{j})_{j
\in W}$. Then, using the notation in Section~\ref{subsec:plug-in}, we
introduce the linear functional
\begin{equation*}
\hat{\mathcal{L}}_{W}: L^{2}(\mathbb{R}) \to \mathbb{R}, \quad
\hat{\mathcal{L}}_{W} v := \left \langle v, \widehat{uv_{0}} \right \rangle _{L^{2}(\mathbb{R})} = \left \langle v, \mathcal{G}_{n}^{-1}
\widehat{uv_{1}} \right \rangle _{L^{2}(\mathbb{R})}.
\end{equation*}
%
The purpose of this paper is to investigate asymptotic properties of
$\hat{\mathcal{L}}_{W} $ as the sample size $|W| = n$ tends to infinity.

\subsection{An estimator for $\mathbf{ uv_{1} }$}

In this section we introduce an estimator for the function $uv_{1}$.
Therefore, let $\psi $ denote the characteristic function\index{characteristic function} of\vadjust{\goodbreak}
$X(0)$. Then, by Assumption~\ref{ass:basic_assumptions}, (2), together
with formula~\eqref{eq:exponent_psi}, we find that $\psi $ can be
rewritten as
%
\begin{equation}
\label{eq:psi}
\psi (t) = \mathbb{E}e^{itY_{0}} = \exp \Big ( i\gamma t +
\int _{\mathbb{R}} (e^{itx} - 1) v_{1}(x) dx \Big ), \quad t \in
\mathbb{R},
\end{equation}
for some $\gamma \in \mathbb{R}$ and the L\'{e}vy density\index{L\'{e}vy ! density} $v_{1}$ given
in~\eqref{eq:levy-characterisitic-of-field}. We call $\gamma $ the
\textit{drift parameter} or shortly \textit{drift} of $X$. As a
consequence of representation~\eqref{eq:psi}, the random field $X$ is
purely non-\xch{Gaussian}{gaussian}. It is subsequently assumed that the drift
$\gamma $ is \xch{known}{knwon}.

Taking derivatives in~\eqref{eq:psi} leads to the identity
\begin{equation*}
-i \frac{\psi ^{\prime }(t)}{\psi (t)} = \gamma + \mathcal{F}_{+}[uv
_{1}](t), \quad t \in \mathbb{R}.
\end{equation*}
Neglecting $\gamma $ for the moment, this relation suggests that a
natural estimator $\widehat{\mathcal{F}_{+}[uv_{1}]}$ for $
\mathcal{F}_{+}[uv_{1}]$ is given by
\begin{equation*}
\widehat{\mathcal{F}_{+}[uv_{1}]}(t) = \frac{\hat{\theta }(t)}{
\tilde{\psi }(t)}, \quad t \in \mathbb{R},
\end{equation*}
with
\begin{equation*}
\tilde{\psi }(t) = \frac{1}{\hat{\psi }(t)} \Eins _{ \{ |
\hat{(\psi )}(t)| > n^{-1/2} \} }, \quad t \in \mathbb{R},
\end{equation*}
and $\hat{\psi }(t) = \sum _{j \in W} e^{itY_{j}}$, $\hat{\theta }(t) =
\sum _{j \in W} Y_{j} e^{itY_{j}}$ being the empirical counterparts of
$\psi $ and $\theta = -i\psi ^{\prime }$.

Now, consider for any $b > 0$ a function $K_{b}:\mathbb{R}\to
\mathbb{R}$ with the following properties:
\begin{enumerate}%
\item[\bf (K1)] $K_{b} \in L^{2}(\mathbb{R})$;
\item[\bf (K2)] $\supp(\mathcal{F}_{+} [K_{b}]) \subseteq [-b^{-1},
b^{-1}]$;
\item[\bf (K3)] $|1-\mathcal{F}_{+}[K_{b}](x)| \lesssim \min \{1, b|x|\}$
for all $x \in \mathbb{R}$.
\end{enumerate}
Then, for any $b > 0$, we define the estimator $\widehat{uv_{1}}$ for
$uv_{1}$ by
%
\begin{equation}
\label{eq:uv_1_estimator}
\widehat{uv_{1}}(t) = \mathcal{F}_{+}^{-1} \Big [
\widehat{\mathcal{F}_{+}[uv_{1}]} \mathcal{F}_{+}[K_{b}] \Big ](t) = \frac{1}{2
\pi } \int _{\mathbb{R}} e^{-itx}
\frac{\hat{\theta }(x)}{
\tilde{\psi }(x)} \mathcal{F}_{+}[K_{b}](x) dx, \quad t \in
\mathbb{R}.
\end{equation}
%
\begin{rem}
\begin{enumerate}[(a)]%
\item
If $\widehat{uv_{1}}$ is
a consistent estimator for
$uv_{1}$, it is reasonable to assume that $\gamma = 0$
(cf.~\cite{KaRoSpoWalk18}). 
Indeed, for the asymptotic
results below, the value of $\gamma $ is irrelevant. Even if
$\gamma \neq 0$, the functional $\hat{\mathcal{L}}_{W}$ estimates the
intended quantity with $\widehat{uv_{1}}$ given
in~\eqref{eq:uv_1_estimator} (cf.
Section~\ref{sec:negelcting_the_drift}).
\item
Choosing $K_{b}(x) = \frac{\sin (b^{-1}x)}{\pi x}$ yields the estimator
$\widehat{uv_{1}}$ that we introduced in~\cite{KaRoSpoWalk18}
and~\cite{GlRothSpo017}, originally designed by Comte and
Genon-Catalot~\cite{comte1} in the case when $X$ is a pure jump
L\'{e}vy process.
\end{enumerate}
\end{rem}

\subsection{Discussion and examples}

In order to explain Assumption~\ref{ass:basic_assumptions}, we prepend
the following proposition whose proof can be found in Appendix.\vadjust{\goodbreak}
%
\begin{prop}
\label{lem:properties_transferred}
Let the infinitely divisible moving average random field $X = \{X(t),\break
t \in \mathbb{R}^{d} \}$ be given as above and suppose $u(x) = x$.
\begin{enumerate}[(a)]%
\item
Let Assumption~\ref{ass:basic_assumptions}, (1) and (2) be satisfied.
Then $uv_{1} \in L^{1}(\mathbb{R}) \cap L^{2}(\mathbb{R})$ is bounded.
Moreover,
%
\begin{equation}
\label{eq:Fouriertrafo_uv_1}
\mathcal{F}_{+}[uv_{1}](x) = \int _{\supp(f)} f(s) \mathcal{F}
_{+}[uv_{0}](f(s)x) ds, \quad \text{for all} \ x \in \mathbb{R},
\end{equation}
that is, the expression in Assumption~\ref{ass:basic_assumptions}, (4) is valid.
\item
Let Assumption~\ref{ass:basic_assumptions}, (1) and (3) hold true. Then
$\int _{\mathbb{R}} |x|^{2+\tau } |(uv_{1})(x)|dx < \infty $ (also in
the case when $\tau = 0$).
\item
Assumption~\ref{ass:basic_assumptions}, (5) is satisfied if and only
if the function $\mathbb{R}\ni x \mapsto (1+x^{2})^{-\frac{1}{2} +
\varepsilon } \frac{1}{\psi (x)}$, with $\psi $ given
in~\eqref{eq:psi}, for some
$\varepsilon > 0$ belongs to $L^{2}(\mathbb{R})$.
\end{enumerate}
\end{prop}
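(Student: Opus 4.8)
The plan is to reduce all three parts to the pointwise identity obtained by multiplying \eqref{eq:levy-characterisitic-of-field} by $u$, namely $(uv_1)(x) = \int_{\supp(f)} \sgn(f(s))\,(uv_0)(x/f(s))\,ds$ for a.e.\ $x\in\mathbb{R}$ (this is just $uv_1=\mathcal{G}uv_0$ written out, cf.\ \eqref{eq:operator_equation}). For part~(a), boundedness of $uv_1$ is then immediate, since $|(uv_1)(x)|\le\|uv_0\|_{L^{\infty}(\mathbb{R})}\,\nu_d(\supp f)<\infty$ and $\supp f$ is bounded by Assumption~\ref{ass:basic_assumptions},~(1). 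For the $L^1$- and $L^2$-bounds I would use Tonelli's theorem, respectively Minkowski's integral inequality, together with the scaling identity $\|(uv_0)(\cdot/f(s))\|_{L^p(\mathbb{R})}=|f(s)|^{1/p}\|uv_0\|_{L^p(\mathbb{R})}$; this gives $\|uv_1\|_{L^1(\mathbb{R})}\le(\int_{\supp f}|f(s)|\,ds)\,\|uv_0\|_{L^1(\mathbb{R})}$ and $\|uv_1\|_{L^2(\mathbb{R})}\le(\int_{\supp f}|f(s)|^{1/2}\,ds)\,\|uv_0\|_{L^2(\mathbb{R})}$, and both prefactors are finite because $f\in L^{2+\tau}(\mathbb{R}^d)$ has compact support, so by H\"{o}lder's inequality $f\in L^1(\mathbb{R}^d)$ and $\int_{\supp f}|f(s)|^{1/2}\,ds<\infty$ (the latter being exactly \eqref{eq:integrability_condition_f}). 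Knowing $uv_1\in L^1(\mathbb{R})$, its Fourier transform is defined pointwise; inserting the representation into $\mathcal{F}_+[uv_1](x)=\int_{\mathbb{R}}e^{itx}(uv_1)(t)\,dt$, interchanging the two integrals (justified by the $L^1$-estimate just obtained) and substituting $y=t/f(s)$ in the inner integral yields \eqref{eq:Fouriertrafo_uv_1}, which in particular makes the expression in Assumption~\ref{ass:basic_assumptions},~(4) meaningful. The only delicate point is that for $s$ with $f(s)<0$ the substitution reverses orientation and produces a sign, which is precisely cancelled by the factor $\sgn(f(s))$, so in either case the inner integral equals $f(s)\,\mathcal{F}_+[uv_0](f(s)x)$.

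For part~(b), the same change of variables (bring the polynomial weight inside the representation of $uv_1$, apply Tonelli, substitute $y=x/f(s)$) bounds the weighted $L^1$-integral of $uv_1$ by a finite constant --- an $L^p$-norm of $f$ over its bounded support, finite by Assumption~\ref{ass:basic_assumptions},~(1) together with H\"{o}lder's inequality --- times the weighted $L^1$-integral of $uv_0$ supplied by Assumption~\ref{ass:basic_assumptions},~(3); since the positivity of $\tau$ is nowhere used, the estimate also holds in the borderline case $\tau=0$.

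For part~(c), I would first identify the function in \eqref{eq:abs_psi_condition} explicitly. Interchanging the order of integration (legitimate since $uv_0\in L^1(\mathbb{R})$) and computing the inner $y$-integral gives, for every $z\in\mathbb{R}$, $\int_0^z\operatorname{Im}(\mathcal{F}_+[uv_0](y))\,dy=\int_{\mathbb{R}}v_0(t)\,(1-\cos(tz))\,dt$, where I used $(uv_0)(t)/t=v_0(t)$ and noted the integral is finite because $v_0$ is a L\'{e}vy density. Taking $z=f(s)x$, integrating over $\supp f$ and substituting $w=tf(s)$ rewrites the exponent in \eqref{eq:abs_psi_condition} as $\int_{\mathbb{R}}(1-\cos(wx))\,v_1(w)\,dw$, by the definition \eqref{eq:levy-characterisitic-of-field} of $v_1$. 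Comparing with \eqref{eq:psi}, which yields $|\psi(x)|=\exp(\int_{\mathbb{R}}(\cos(wx)-1)v_1(w)\,dw)$, the function in \eqref{eq:abs_psi_condition} coincides with $x\mapsto 1/|\psi(x)|$, and hence has the same modulus as $1/\psi$. The claimed equivalence then follows by unwinding the definition of $H^{-1+\varepsilon}(\mathbb{R})$: membership of this (real, positive) function in that space translates, via Plancherel's identity and a harmless reparametrization of $\varepsilon$, into a weighted $L^2$-bound which --- since $1/|\psi|$ and $1/\psi$ have the same modulus --- is exactly $(1+x^2)^{-1/2+\varepsilon}/\psi(x)\in L^2(\mathbb{R})$.

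I expect the main obstacles to be two-fold. In part~(a), the careful bookkeeping of sign and orientation in the change of variables, and --- more importantly --- the justification of the several Fubini/Tonelli interchanges: the relevant integrands are only seen to be absolutely integrable after the scaling identity has been applied, so the order of the arguments matters. In part~(c), the identification of the function in \eqref{eq:abs_psi_condition} with $1/|\psi|$ is the crux; the intermediate integrals there involve $v_0$ itself (which need not belong to $L^1(\mathbb{R})$), so all interchanges and the finiteness of the resulting integrals must be controlled through the L\'{e}vy-density property $\int_{\mathbb{R}}\min\{1,t^2\}\,v_0(t)\,dt<\infty$ rather than through $L^1$-integrability.
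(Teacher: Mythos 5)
Your treatment of parts (a) and (b) is essentially the paper's own proof: Minkowski's integral inequality (Tonelli for the $L^{1}$-bound) applied to the representation $uv_{1}=\mathcal{G}uv_{0}$, followed by Fubini and the substitution $y=t/f(s)$ to get \eqref{eq:Fouriertrafo_uv_1}, and your sign/orientation bookkeeping for $f(s)<0$ is correct. The only genuinely different step is part (c): the paper disposes of it by citing the proof of Theorem 3.10 in \cite{GlRothSpo017}, which identifies the function in \eqref{eq:abs_psi_condition} with $1/|\psi |$, whereas you re-derive this identity from scratch via $\int _{0}^{z}\textup{Im}\bigl(\mathcal{F}_{+}[uv_{0}](y)\bigr)dy=\int _{\mathbb{R}}(1-\cos (tz))v_{0}(t)\,dt$ and the rescaling into $v_{1}$, controlling everything through $\int _{\mathbb{R}}\min \{1,t^{2}\}v_{0}(t)\,dt<\infty $; that is correct and makes the proof self-contained at the cost of redoing work the paper outsources. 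Two caveats. First, in (c) drop the appeal to Plancherel: with the reading of $H^{-1+\varepsilon }(\mathbb{R})$ that the paper actually operates with (a weighted $L^{2}$-condition on the function itself, as Example~\ref{expl:gamma_random_measure} and the uses in Lemma~\ref{lem:important_lemma} show), the equivalence is nothing but the reparametrization $\varepsilon \mapsto \varepsilon /2$, while under the Fourier-side definition of $H^{\delta }$ from Section~\ref{subsec:notation} Plancherel would not convert a weight on $\mathcal{F}_{+}[1/|\psi |]$ into the stated weight on $1/\psi $, so the remark is either superfluous or wrong depending on the interpretation. Second, in (b) the computation you describe (and the paper's own appendix proof) yields $\int _{\mathbb{R}}|x|^{1+\tau }|(uv_{1})(x)|\,dx\lesssim \int _{\mathbb{R}}|x|^{1+\tau }|(uv_{0})(x)|\,dx$, i.e. the $(2+\tau )$-moment of $v_{1}$; the exponent $2+\tau $ on $|(uv_{1})(x)|$ in the displayed statement is not reachable from Assumption~\ref{ass:basic_assumptions}, (3) by this route, so you should state explicitly that the $|x|^{1+\tau }$-weighted bound is what is being proved.
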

The compact support property in Assumption~\ref{ass:basic_assumptions},
(1) ensures that the random field $(Y_{j})_{j \in \mathbb{Z}^{d}}$ is
$m$-dependent with $m > \Delta ^{-1} \textup{diam}(\supp(f))$
(cf. Lemma~\ref{lem:m_depend_X}). In particular, $m$ increases when the
grid size $\Delta $ of the sample is decreasing. Moreover, compact
support of $f$ together with $f \in L^{2+\tau }(\mathbb{R})$ implies
that $f \in L^{q}(\mathbb{R})$ for all $0 < q \leq 2+\tau $.
Consequently, $f$ fulfills the integrability
condition~\eqref{eq:integrability_condition_f}. In contrast, if $f$ does
not have compact support, the $\Lambda $-integrability only ensures
$f \in L^{2}(\mathbb{R})$.

Assumption~\ref{ass:basic_assumptions}, (3) is a moment assumption on
$\Lambda $. More precisely, it is satisfied if and only if
\begin{equation*}
\mathbb{E}|\Lambda (A)|^{2+\tau } < \infty
\end{equation*}
for all $A \in \mathcal{E}_{0}(\mathbb{R}^{d})$,
cf.~\cite{sato1999}. By
Proposition~\ref{lem:properties_transferred}, (b), this assumption also
implies $\mathbb{E}|X(0)|^{2+\tau } < \infty $ in our setting.

As a consequence of Proposition~\ref{lem:properties_transferred}, (a)
and (c), Assumption~\ref{ass:basic_assumptions}, (4) ensures regularity
of $uv_{1}$ whereas (5) yields the polynomial decay of $\psi $. It was shown
in~\cite[Theorem 3.10]{GlRothSpo017} that $\psi $ and $uv_{1}$ are
connected via the relation
\begin{equation*}
|\psi (x)| = \exp \Big ( - \int _{0}^{x} \textup{Im}\big ( \mathcal{F}
_{+}[uv_{1}](y) \big ) dy \Big ), \quad x \in \mathbb{R};
\end{equation*}
hence, more regularity of $uv_{1}$ ensures slower decay rates for
$|\psi (x)|$ as $x \to \pm \infty $. Further results on the polynomial decay
of infinitely divisible characteristic functions\index{infinitely divisible characteristic functions} as well as sufficient
conditions for this property to hold can be found
in~\cite{Trabs14}.

Let us give some examples of $\Lambda $ and $f$ \xch{satisfying}{satisfiying}
Assumption~\ref{ass:basic_assumptions}, (1)--(5).

\begin{expl}[Gamma random measure]
\label{expl:gamma_random_measure}
Fix $b > 0$ and let for any $x \in \mathbb{R}$, $v_{0}(x) = x^{-1} e
^{-bx} \Eins _{(0,\infty )}(x)$. Clearly,
Assumption~\ref{ass:basic_assumptions}, (2) and (3) are 
satisfied for any
$\tau > 0$. The Fourier transform of $uv_{0}$ is given by $
\mathcal{F}_{+}[uv_{0}](x) = (b-ix)^{-1}$, $x \in \mathbb{R}$; hence
\begin{equation*}
\int _{\supp(f)} f(s) \mathcal{F}_{+}[uv_{0}] (f(s) x) ds =
\int _{\supp(f)} \frac{f(s)}{b-if(s)x} ds, \quad x \in
\mathbb{R}.
\end{equation*}
The latter identity shows that Assumption~\ref{ass:basic_assumptions},
(4) holds true for any integrable $f$ with a compact support. Moreover,
a simple calculation yields that for any $x \in \mathbb{R}$,
Assumption~\ref{ass:basic_assumptions}, (5) becomes
%
\begin{equation}
\label{eq:five_equivalent}
\int _{\mathbb{R}} (1+x^{2})^{-1+\varepsilon } \exp \Big (
\int _{\supp(f)} \log \big ( 1 + \frac{x^{2} f^{2}(s)}{b}
\big ) ds \Big ) dx < \infty .
\end{equation}
This condition is fulfilled for any $\varepsilon < \frac{1}{2} -
\alpha $ if
\begin{equation*}
\alpha := \int _{\supp(f)} \max \Big \{ 1, \frac{f^{2}(s)}{b}
\Big \} ds < \frac{1}{2}.
\end{equation*}
\end{expl}

\subsection{Consistency of $\hat{\mathcal{L}}_{W}$}

In this section, we give an upper bound for the estimation error
$\mathbb{E}|\hat{\mathcal{L}}_{W} v - \mathcal{L}v|$ that allows to
derive conditions under which $\hat{\mathcal{L}}_{W}$ is consistent for
the linear functional $\mathcal{L}: L^{2}(\mathbb{R}) \to \mathbb{R}$
given by
\begin{equation*}
\mathcal{L}v = \left \langle v, uv_{0} \right \rangle , \quad v
\in L^{2}(\mathbb{R}).
\end{equation*}
With the notations from Section~\ref{subsec:plug-in}, we have that the
adjoint operator $\mathcal{G}^{-1 \ast } :\break \textup{Image}(\mathcal{G})
\to L^{2}(\mathbb{R})$ of $\mathcal{G}^{-1}$ is given by
%
\begin{equation}
\label{eq:Gi_inverse_adjoint}
\mathcal{G}^{-1 \ast } v = \mathcal{M}^{-1} \mathcal{F}_{\times }^{-1}
\Big ( \frac{1}{\bar{\mu }_{f}} \mathcal{F}_{\times }\mathcal{M}v
\Big ), \quad v \in \textup{Image}(\mathcal{G}),
\end{equation}
where $\bar{\mu }_{f}$ denotes the complex conjugate function of
$\mu _{f}$. Moreover, the adjoint $\mathcal{G}_{n}^{-1 \ast }:L^{2}(
\mathbb{R}) \to L^{2}(\mathbb{R})$ of $\mathcal{G}_{n}^{-1}$ writes as
\begin{equation*}
\mathcal{G}_{n}^{-1 \ast } v = \mathcal{M}^{-1} \mathcal{F}_{\times }
^{-1} \Big ( \frac{1}{\bar{\mu }_{f,n}} \mathcal{F}_{\times }
\mathcal{M}v \Big ), \quad v \in L^{2}(\mathbb{R}),
\end{equation*}
with $\frac{1}{\bar{\mu }_{f,n}} = \frac{1}{\bar{\mu }_{f}}
\Eins _{ \{ |\bar{\mu }_{f}| > a_{n} \} }$. Notice that
$\mathcal{G}_{n}^{-1 \ast }$ is a bounded operator whereas $
\mathcal{G}^{-1 \ast }$ is unbounded in general.

\begin{rem}
Notice that $\mathcal{G}_{n}^{-1 \ast } = \mathcal{G}^{-1 \ast }$ if
$a_{n} = 0$ for any $n \in \mathbb{N}$. Hence, $\mathcal{G}_{n}^{-1
\ast }\widehat{uv_{1}}$ in this case only is well-defined if
$\widehat{uv_{1}}$ is 
an element of $\textup{Image}(
\mathcal{G}^{\ast })$ 
what is indeed a very restrictive assumption. For
a detailed discussion we refer to~\cite{GlRothSpo017}.
\end{rem}
With the previous notations we now derive an upper bound for
$\mathbb{E}|\hat{\mathcal{L}}_{W} v - \mathcal{L}v|$. Therefore, recall
condition ($\mathbf{U}_{\beta}$) from Section~\ref{subsec:plug-in}.

\begin{lem}
\label{theo:upper_bound_estimation_error}
Let $\gamma = 0$ and suppose Assumption~\ref{ass:basic_assumptions}, (1)--(3) hold true for some $\tau \geq 0$. Moreover, let condition
$(\mathbf{U}_{\beta })$ be satisfied for some $\beta \geq 0$ and assume that
$K_{b}:\mathbb{R}\to \mathbb{R}$ is a function with properties
(\textbf{K1})--(\textbf{K3}). Then
%
\begin{align}
\label{eq:upper_bound_uv_1}
\mathbb{E}|\hat{\mathcal{L}}_{W} v - \mathcal{L}v| \leq{}
& \frac{S}{\sqrt{
\pi }} \mathbb{E}|Y_{0}| \Big ( \frac{n}{b} \Big )^{1/2}
\|\big (
\mathcal{G}_{n}^{-1 \ast } - \mathcal{G}^{-1 \ast } \big ) v\|_{L^{2}(
\mathbb{R})}\nonumber
\\
& + \frac{1}{2\pi } \left \langle |\mathcal{F}_{+}[\mathcal{G}^{-1
\ast } v] |, |\mathcal{F}_{+}[uv_{1}]| |1-\mathcal{F}_{+}[K_{b}]| \right \rangle _{L^{2}(\mathbb{R})}\nonumber
\\
& + \frac{c \cdot S}{2 \pi \sqrt{n}} \Big ( \sqrt{\mathbb{E}|Y
_{0}|^{2}} + \|uv_{1}\|_{L^{1}(\mathbb{R})} \Big ) \int _{\mathbb{R}} \frac{|
\mathcal{F}_{+}[\mathcal{G}^{-1 \ast }v]|(x)}{|\psi (x)|} dx
\end{align}
for any $v \in \textup{Image}(\mathcal{G})$ such that $\int _{
\mathbb{R}} \frac{|\mathcal{F}_{+}[\mathcal{G}^{-1 \ast }v](x)|}{|
\psi (x)|} dx < \infty $, where  $c > 0$ is some constant and
$S:=\sup _{b > 0, \ x \in \mathbb{R}} |\mathcal{F}_{+}[K_{b}](x)|$\xch{.}{.-}
\end{lem}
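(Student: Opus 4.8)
The plan is to decompose the error $\hat{\mathcal{L}}_W v - \mathcal{L}v$ into three terms corresponding to the three summands on the right-hand side, by inserting intermediate quantities between $\widehat{uv_0}=\mathcal{G}_n^{-1}\widehat{uv_1}$ and $uv_0 = \mathcal{G}^{-1}uv_1$. First I would write, using self-adjointness of $\mathcal{M}$ and unitarity of the Fourier transforms involved,
\begin{equation*}
\hat{\mathcal{L}}_W v = \langle v, \mathcal{G}_n^{-1}\widehat{uv_1}\rangle_{L^2(\mathbb{R})} = \langle \mathcal{G}_n^{-1\ast}v, \widehat{uv_1}\rangle_{L^2(\mathbb{R})},
\end{equation*}
and similarly $\mathcal{L}v = \langle \mathcal{G}^{-1\ast}v, uv_1\rangle_{L^2(\mathbb{R})}$ (valid since $v \in \textup{Image}(\mathcal{G})$, so $\mathcal{G}^{-1\ast}v$ is well-defined by~\eqref{eq:Gi_inverse_adjoint}). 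Then I insert $\langle \mathcal{G}^{-1\ast}v,\widehat{uv_1}\rangle$ as an intermediate term, splitting off the first summand as $\langle (\mathcal{G}_n^{-1\ast}-\mathcal{G}^{-1\ast})v, \widehat{uv_1}\rangle$, and insert $\langle \mathcal{G}^{-1\ast}v, \widehat{uv_1}^{\,\mathrm{det}}\rangle$, where $\widehat{uv_1}^{\,\mathrm{det}}$ replaces the empirical $\hat\theta/\tilde\psi$ by its population analogue $\mathcal{F}_+[uv_1]$, so that the second summand becomes the (deterministic) smoothing bias from the kernel $K_b$ and the third summand is the stochastic fluctuation of $\widehat{uv_1}$.

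For the first term, I bound $|\langle (\mathcal{G}_n^{-1\ast}-\mathcal{G}^{-1\ast})v, \widehat{uv_1}\rangle| \leq \|(\mathcal{G}_n^{-1\ast}-\mathcal{G}^{-1\ast})v\|_{L^2(\mathbb{R})}\,\|\widehat{uv_1}\|_{L^2(\mathbb{R})}$ by Cauchy--Schwarz, then take expectation and estimate $\mathbb{E}\|\widehat{uv_1}\|_{L^2(\mathbb{R})}$. Using~\eqref{eq:uv_1_estimator}, Plancherel, property (\textbf{K2}) (support of $\mathcal{F}_+[K_b]$ in $[-b^{-1},b^{-1}]$), the definition $\tilde\psi = \hat\psi^{-1}\Eins_{\{|\hat\psi|>n^{-1/2}\}}$ (so $|\tilde\psi|\leq n^{1/2}$) and $|\hat\theta(x)|\leq \sum_{j\in W}|Y_j|$, one gets a bound of order $S\,\mathbb{E}|Y_0|\,(n/b)^{1/2}$, matching the first summand. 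For the second (bias) term, I write it via Plancherel as $\tfrac{1}{2\pi}\langle \mathcal{F}_+[\mathcal{G}^{-1\ast}v], \mathcal{F}_+[uv_1](1-\mathcal{F}_+[K_b])\rangle$ and bound the integrand in absolute value, producing exactly the second summand. For the third term, $\langle \mathcal{G}^{-1\ast}v, \widehat{uv_1}-\widehat{uv_1}^{\,\mathrm{det}}\rangle$, I pass to the Fourier side and write it as $\tfrac{1}{2\pi}\int_{\mathbb{R}}\mathcal{F}_+[\mathcal{G}^{-1\ast}v](x)\big(\tfrac{\hat\theta(x)}{\tilde\psi(x)}-\mathcal{F}_+[uv_1](x)\big)\mathcal{F}_+[K_b](x)\,dx$; here I would take absolute values and expectations inside the integral, and the key quantity to control is $\mathbb{E}\big|\tfrac{\hat\theta(x)}{\tilde\psi(x)}-(\gamma+\mathcal{F}_+[uv_1](x))\big|$ (with $\gamma=0$ by hypothesis), which by a standard linearization argument (splitting at the event $\{|\hat\psi|>n^{-1/2}\}$ and using $\hat\theta/\tilde\psi - \theta/\psi = (\hat\theta-\theta)\tilde\psi + \theta(\tilde\psi - \psi^{-1})$, $\mathbb{E}|\hat\psi(x)-n\psi(x)|^2 \leq n\,\mathbb{E}|Y_0|^0$ and $\mathbb{E}|\hat\theta(x)-n\theta(x)|^2 \leq n\,\mathbb{E}|Y_0|^2$, after rescaling by $n^{-1}$) is bounded by $\tfrac{c}{\sqrt{n}\,|\psi(x)|}\big(\sqrt{\mathbb{E}|Y_0|^2}+\|uv_1\|_{L^1(\mathbb{R})}\big)$; integrating against $|\mathcal{F}_+[\mathcal{G}^{-1\ast}v]|$ and using $|\mathcal{F}_+[K_b]|\leq S$ gives the third summand, the integrability hypothesis on $|\mathcal{F}_+[\mathcal{G}^{-1\ast}v]|/|\psi|$ guaranteeing finiteness.

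The main obstacle is the third term: obtaining the stated bound on $\mathbb{E}|\hat\theta/\tilde\psi - \mathcal{F}_+[uv_1]|$ uniformly in $x$ with the explicit $1/|\psi(x)|$ factor. This requires carefully handling the truncation event $\{|\hat\psi(x)|>n^{-1/2}\}$ — in particular controlling the contribution of its complement, where one exploits that $\mathbb{P}(|\hat\psi(x)| \leq n^{-1/2})$ is small when $n|\psi(x)|$ is not too small, together with a crude deterministic bound $|\hat\theta(x)/\tilde\psi(x)| \leq n^{1/2}\sum_{j\in W}|Y_j|$ on that event — and then assembling the pieces so that the $m$-dependence of $(Y_j)_{j\in W}$ does not spoil the second-moment estimates (it does not, since variances of sums of $m$-dependent arrays are still $\mathcal{O}(n)$). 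The first and second terms are comparatively routine applications of Cauchy--Schwarz, Plancherel, and the kernel properties (\textbf{K1})--(\textbf{K3}).
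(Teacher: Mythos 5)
Your plan coincides with the paper's own proof: the paper decomposes the error into $\langle(\mathcal{G}_n^{-1\ast}-\mathcal{G}^{-1\ast})v,\widehat{uv_1}\rangle$ plus $\langle\mathcal{G}^{-1\ast}v,\widehat{uv_1}-uv_1\rangle$, treats the first exactly as you do (Cauchy--Schwarz, Plancherel, \textbf{(K2)} and the crude bound $|\hat\theta/\tilde\psi|\le n^{1/2}\cdot\frac{1}{n}\sum_{j\in W}|Y_j|$), and splits the second into the same kernel-bias term and the same stochastic term, where your key pointwise estimate $\mathbb{E}\bigl|\hat\theta(x)/\tilde\psi(x)-\theta(x)/\psi(x)\bigr|\lesssim n^{-1/2}|\psi(x)|^{-1}\bigl(\sqrt{\mathbb{E}|Y_0|^2}+\|uv_1\|_{L^1(\mathbb{R})}\bigr)$ is precisely what the paper gets from the $m$-dependent moment bounds of Lemmas 8.1 and 8.3 in \cite{KaRoSpoWalk18}. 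Your insertion of the deterministic smoothed intermediate $K_b\ast uv_1$ is only a cosmetic reorganization of the paper's splitting inside the Fourier integral, so the proposal is correct and essentially identical in approach.
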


A proof of Lemma~\ref{theo:upper_bound_estimation_error} as well as of
Theorem~\ref{cor:order_of_convergence} below can be found in Appendix.
%
\begin{theo}
\label{cor:order_of_convergence}
Fix $\gamma \in \mathbb{R}$. Suppose that condition $(\mathbf{U}_{
\beta _{1}})$ is satisfied for some $\beta _{1} \geq 0$ and let
$v \in L^{2}(\mathbb{R})$ be such that $\mathcal{G}^{-1 \ast }v \in H
^{1}(\mathbb{R})$, $\frac{\mathcal{F}_{+}[\mathcal{G}^{-1 \ast }v]}{
\psi } \in L^{1}(\mathbb{R})$ and
%
\begin{equation}
\label{eq:contained_in_range_condition}
\begin{split}
(\mathcal{M}v)(\exp (\, \cdot \,)), \ (\mathcal{M}v)(-\exp (\, \cdot
\,)) \in H^{\beta _{2}}(\mathbb{R})
\end{split}
\end{equation}
for some $\beta _{2} > \beta _{1}$. Moreover, let $a = a_{n}$ and
$b = b_{n}$ be sequences with the properties
\begin{equation*}
a_{n} \to 0, \quad b_{n} \to 0 \quad \text{and} \quad a_{n} = o
\Big ( \Big ( \frac{n}{b_{n}} \Big )^{\frac{\beta _{1}}{2(\beta _{1} -
\beta _{2})}} \Big ), \quad  \text{as} \ n \to \infty,
\end{equation*}
and assume that conditions (\textbf{K1})--(\textbf{K3}) are fulfilled.
Then, under  Assumption~\ref{ass:basic_assumptions}, \mbox{(1)--(4)},
$\mathbb{E}|\hat{\mathcal{L}}_{W} v - \mathcal{L}v| \to 0$ as
$n \to \infty $ with the order of convergence given by
\begin{equation*}
\mathbb{E}|\hat{\mathcal{L}}_{W} v - \mathcal{L}v| = \mathcal{O}
\Big ( a_{n}^{\frac{\beta _{2}}{\beta _{1}} - 1 } \sqrt{ \frac{n}{b
_{n}} } + b_{n} + \frac{1}{\sqrt{n}} \Big ).
\end{equation*}
\end{theo}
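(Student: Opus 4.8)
The plan is to deduce the theorem from the error bound of Lemma~\ref{theo:upper_bound_estimation_error} by estimating each of its three summands separately in terms of $a_n$, $b_n$ and $n$. First I would dispose of the drift: since the estimator $\widehat{uv_1}$ in~\eqref{eq:uv_1_estimator}, and hence $\hat{\mathcal{L}}_W$, does not involve $\gamma$, the reduction carried out in Section~\ref{sec:negelcting_the_drift} allows us to assume $\gamma = 0$, so that Lemma~\ref{theo:upper_bound_estimation_error} is applicable. Its hypotheses are then in force: Assumption~\ref{ass:basic_assumptions}, (1)--(3) holds (with $\tau = 0$ if no larger value is available), $(\mathbf{U}_{\beta_1})$ is assumed, $K_b$ satisfies (\textbf{K1})--(\textbf{K3}), which in particular gives $S = \sup_{b>0,\,x\in\mathbb{R}}|\mathcal{F}_+[K_b](x)| < \infty$ by (\textbf{K3}), and $\mathbb{E}|Y_0|^2 < \infty$, $\|uv_1\|_{L^1(\mathbb{R})} < \infty$ by Proposition~\ref{lem:properties_transferred}, (b) and (a). The integrability hypothesis of the lemma, $\int_{\mathbb{R}} |\mathcal{F}_+[\mathcal{G}^{-1\ast}v](x)|/|\psi(x)|\,dx < \infty$, is exactly the assumption $\mathcal{F}_+[\mathcal{G}^{-1\ast}v]/\psi \in L^1(\mathbb{R})$; and the well-definedness of $\mathcal{G}^{-1\ast}v$, i.e. $v \in \mathrm{Image}(\mathcal{G})$, follows from~\eqref{eq:contained_in_range_condition} together with $(\mathbf{U}_{\beta_1})$ (via the same log-change-of-variables used below and $H^{\beta_2}(\mathbb{R}) \subset H^{\beta_1}(\mathbb{R})$).

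Next I would bound the three terms on the right-hand side of~\eqref{eq:upper_bound_uv_1}. The third term is $\mathcal{O}(n^{-1/2})$ by the finiteness statements just recalled. For the second term, (\textbf{K3}) gives $|1 - \mathcal{F}_+[K_b](x)| \lesssim b|x|$, so it is bounded by a constant times $b_n \int_{\mathbb{R}} |\mathcal{F}_+[\mathcal{G}^{-1\ast}v](x)|\,|\mathcal{F}_+[uv_1](x)|\,|x|\,dx$; by Assumption~\ref{ass:basic_assumptions}, (4) (valid by Proposition~\ref{lem:properties_transferred}, (a)) we have $|\mathcal{F}_+[uv_1](x)| \lesssim (1+x^2)^{-1/2}$, hence $|\mathcal{F}_+[uv_1](x)|\,|x| \lesssim 1$, and since $\mathcal{G}^{-1\ast}v \in H^1(\mathbb{R})$ forces $\mathcal{F}_+[\mathcal{G}^{-1\ast}v] \in L^1(\mathbb{R})$ (Cauchy--Schwarz against $(1+x^2)^{-1/2}$, which is square-integrable once weighted by $(1+x^2)^{1/2}$ against an $L^2$ function with the $H^1$-weight), the integral is finite and the term is $\mathcal{O}(b_n)$.

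The first term is the crux. Writing $\mathcal{G}_n^{-1\ast}v - \mathcal{G}^{-1\ast}v = -\mathcal{M}^{-1}\mathcal{F}_\times^{-1}\big(\bar{\mu}_f^{-1}\,\mathbf{1}_{\{|\mu_f| \le a_n\}}\,\mathcal{F}_\times\mathcal{M}v\big)$ and using that $\mathcal{M}$ and $\mathcal{F}_\times$ are, up to a constant, isometries,
\[
\|(\mathcal{G}_n^{-1\ast} - \mathcal{G}^{-1\ast})v\|_{L^2(\mathbb{R})}^2 \lesssim \int_{\{|\mu_f| \le a_n\}} \frac{|(\mathcal{F}_\times\mathcal{M}v)(x)|^2}{|\mu_f(x)|^2}\,\frac{dx}{|x|}.
\]
On $\{|\mu_f| \le a_n\}$ I split $|\mu_f|^{-2} = |\mu_f|^{2(\beta_2/\beta_1 - 1)}\,|\mu_f|^{-2\beta_2/\beta_1} \le a_n^{2(\beta_2/\beta_1 - 1)}\,|\mu_f|^{-2\beta_2/\beta_1}$, which is legitimate because $\beta_2 > \beta_1$; then $(\mathbf{U}_{\beta_1})$ yields $|\mu_f(x)|^{-2\beta_2/\beta_1} \lesssim (1 + |\log|x||^{\beta_1})^{2\beta_2/\beta_1} \lesssim 1 + |\log|x||^{2\beta_2}$, and the substitutions $x = e^{\,t}$, $x = -e^{\,t}$ (under which $\mathcal{F}_\times\mathcal{M}$ becomes the ordinary Fourier transform of $(\mathcal{M}v)(\pm e^{\,\cdot})$ and $dx/|x| = dt$) identify the remaining integral with a constant times $\|(\mathcal{M}v)(e^{\,\cdot})\|_{H^{\beta_2}(\mathbb{R})}^2 + \|(\mathcal{M}v)(-e^{\,\cdot})\|_{H^{\beta_2}(\mathbb{R})}^2 < \infty$ by~\eqref{eq:contained_in_range_condition}. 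Hence $\|(\mathcal{G}_n^{-1\ast} - \mathcal{G}^{-1\ast})v\|_{L^2(\mathbb{R})} = \mathcal{O}(a_n^{\beta_2/\beta_1 - 1})$, and since $S < \infty$ and $\mathbb{E}|Y_0| < \infty$ the first term is $\mathcal{O}(a_n^{\beta_2/\beta_1 - 1}\sqrt{n/b_n})$. (When $\beta_1 = 0$ the first term eventually vanishes and the claim is trivial, so we may assume $\beta_1 > 0$.)

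Adding the three estimates gives $\mathbb{E}|\hat{\mathcal{L}}_W v - \mathcal{L}v| = \mathcal{O}(a_n^{\beta_2/\beta_1 - 1}\sqrt{n/b_n} + b_n + n^{-1/2})$. Finally, $b_n \to 0$ and $n^{-1/2} \to 0$ by assumption, while the hypothesis $a_n = o\big((n/b_n)^{\beta_1/(2(\beta_1 - \beta_2))}\big)$, after rewriting the exponent as $-1/\big(2(\beta_2/\beta_1 - 1)\big)$, is exactly equivalent to $a_n^{\beta_2/\beta_1 - 1}\sqrt{n/b_n} \to 0$; this proves $\mathbb{E}|\hat{\mathcal{L}}_W v - \mathcal{L}v| \to 0$ with the stated rate. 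The main obstacle is the first-term estimate: it is where the lower bound $(\mathbf{U}_{\beta_1})$, the surplus regularity $\beta_2 > \beta_1$ from~\eqref{eq:contained_in_range_condition}, and the interpolation exponent $\beta_2/\beta_1$ must be combined, and where one has to pass to the multiplicative-group/log picture to make the Fourier and Sobolev bookkeeping work.
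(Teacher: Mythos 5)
Your overall route is the paper's own: reduce to the three-term bound of Lemma~\ref{theo:upper_bound_estimation_error}, get $\mathcal{O}(b_n)$ for the bias term from Assumption~\ref{ass:basic_assumptions}, (4) and \textbf{(K3)}, $\mathcal{O}(n^{-1/2})$ for the last term, and $\|(\mathcal{G}_n^{-1\ast}-\mathcal{G}^{-1\ast})v\|_{L^2(\mathbb{R})}\lesssim a_n^{\beta_2/\beta_1-1}$ for the first. The only real difference is that where the paper simply invokes the argument of Corollary 3.7 in \cite{GlRothSpo017} for the operator-norm estimate, you reprove it: restriction to $\{|\mu_f|\le a_n\}$, the split $|\mu_f|^{-2}\le a_n^{2(\beta_2/\beta_1-1)}|\mu_f|^{-2\beta_2/\beta_1}$, the lower bound $(\mathbf{U}_{\beta_1})$, and the logarithmic change of variables turning the weighted integral into the Sobolev norms in \eqref{eq:contained_in_range_condition}. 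That computation is correct (modulo the small point that for fixed sign of $y$ the transform $\mathcal{F}_\times$ mixes $(\mathcal{M}v)(e^{\,\cdot})$ and $(\mathcal{M}v)(-e^{\,\cdot})$ as a sum/difference, so you get an upper bound by the two $H^{\beta_2}$ norms rather than an identity — which is all you need), and it makes the proof more self-contained than the paper's citation. Your exponent bookkeeping at the end, translating $a_n=o((n/b_n)^{\beta_1/(2(\beta_1-\beta_2))})$ into $a_n^{\beta_2/\beta_1-1}\sqrt{n/b_n}\to 0$, is also right.

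The gap is the drift. You claim that "since the estimator does not involve $\gamma$" the reduction of Section~\ref{sec:negelcting_the_drift} lets you assume $\gamma=0$; but that is precisely backwards — because $\hat\theta/\tilde\psi$ estimates $\gamma+\mathcal{F}_+[uv_1]$ rather than $\mathcal{F}_+[uv_1]$, a nonzero $\gamma$ introduces a genuine bias, and Lemma~\ref{theo:upper_bound_estimation_error} is stated only for $\gamma=0$, so it cannot be applied as is. The paper handles this by redoing the decomposition of Section~\ref{sec:negelcting_the_drift} for the centred sample $\tilde Y_j=Y_j-\gamma$: an extra term of the form $\frac{\gamma}{2\pi}\,\mathbb{E}\big|\langle\mathcal{F}_+[\mathcal{G}^{-1\ast}v],\,\mathcal{F}_+[K_{b_n}]-1\rangle_{L^2(\mathbb{R})}-\langle\mathcal{F}_+[\mathcal{G}^{-1\ast}v],\,\mathcal{F}_+[K_{b_n}]\,\mathbf{1}\{|\hat\psi_\ast|\le n^{-1/2}\}\rangle_{L^2(\mathbb{R})}\big|$ appears on the right-hand side of \eqref{eq:upper_bound_uv_1}, and it is bounded by $\frac{\gamma}{2\pi}\big(b_n\|\mathcal{G}^{-1\ast}v\|_{H^1(\mathbb{R})}+\frac{S}{\sqrt n}\|\mathcal{F}_+[\mathcal{G}^{-1\ast}v]/\psi\|_{L^1(\mathbb{R})}\big)$ using \textbf{(K3)}, the probability bound \eqref{eq:propb_estimation}, and exactly the hypotheses $\mathcal{G}^{-1\ast}v\in H^1(\mathbb{R})$ and $\mathcal{F}_+[\mathcal{G}^{-1\ast}v]/\psi\in L^1(\mathbb{R})$ — this is the sole reason the $H^1$ assumption is in the theorem (it can be dropped when $\gamma=0$, as the paper remarks). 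Your write-up never verifies that this drift-induced term is $\mathcal{O}(b_n+n^{-1/2})$ in mean, so for $\gamma\neq 0$ the argument is incomplete; the repair is exactly the computation above, which you should spell out (noting also that Section~\ref{sec:negelcting_the_drift} is formulated under the CLT assumptions, so you must check that only Assumption~\ref{ass:basic_assumptions}, (1)--(4) and the stated integrability hypotheses are actually used there).
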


\begin{rem}
\begin{enumerate}[(a)]%
\item
Notice that condition $(\mathbf{U}_{\beta })$ ensures uniqueness of
$uv_{0} \in L^{2}(\mathbb{R})$ as a solution of $\mathcal{G}uv_{0} = uv
_{1}$. In Lemma~\ref{theo:upper_bound_estimation_error}, it can be
replaced by the more (and most) general assumption $m_{f,\pm } \neq 0$
almost everywhere on $\mathbb{R}$. Moreover, condition \textbf{(K3)} can
be replaced by $\sup _{b > 0, \ x \in \mathbb{R}} |\mathcal{F}_{+}[K
_{b}](x)| < \infty $ in Lemma~\ref{theo:upper_bound_estimation_error}.
\item
In order to deduce the convergence rate in
Theorem~\ref{cor:order_of_convergence} explicitely,
condition~\eqref{eq:contained_in_range_condition} is essential.
Moreover, it ensures that the function $v$
belongs to the range of
$\mathcal{G}$ (cf.~\cite[Theorem 3.1]{GlRothSpo017}); hence the
expression $\mathcal{G}^{-1 \ast } v$ is well-defined.
\item
The condition $\mathcal{G}^{-1 \ast }v \in H^{1}(\mathbb{R})$ in
Theorem~\ref{cor:order_of_convergence} can be dropped if $\gamma = 0$.
\item
Under the conditions of Theorem~\ref{cor:order_of_convergence}, the
convergence rate of $\mathbb{E}|\hat{\mathcal{L}}_{W} v - \mathcal{L}v|
\to 0$ is at least $\mathcal{O}(n^{-1/2})$ as $n \to \infty $, provided
that
\begin{equation*}
a_{n} = o \Big ( \Big ( \frac{n}{\sqrt{b_{n}}} \Big )^{\frac{\beta _{1}}{
\beta _{1} - \beta _{2}}} \Big )
\quad \text{and} \quad b_{n} =
\mathcal{O}\Big ( \frac{1}{\sqrt{n}} \Big ),\quad  \text{as} \ n \to \infty
.
\end{equation*}
\end{enumerate}
\end{rem}

We close this section with the following example, showing that the
functions $g_{t}$ considered in~\cite[p. 3309]{nickl} may 
belong to the range of $\mathcal{G}^{-1 \ast }$.

\begin{expl}
Fix $t > 0$ and let $v(x) = \frac{1}{x} \Eins _{\mathbb{R}\backslash
[-t,t]}(x)$, $x \in \mathbb{R}$. Apparently, $v \in L^{2}(\mathbb{R})$
fulfills condition~\eqref{eq:contained_in_range_condition} for any
$\beta _{2} > 0$. Let for some fixed $\lambda , \ \theta > 0$,
$f(s) = e^{-\lambda s}\Eins _{(0,\theta )}(s)$, $s \in
\mathbb{R}$. Then a simple computation yields that $(\mathbf{U}_{
\beta _{1}})$ is satisfied with $\beta _{1} = 1$. Moreover, for all
$x \neq 0$
\begin{equation*}
(\mathcal{G}^{-1 \ast }v)(x) = \frac{1}{2 x} \log \Big ( \frac{|x|}{t}
\Big ) \Eins _{(t,t e^{\lambda \theta }]}(|x|)
+ \frac{\lambda
\theta }{2 x}\Eins _{(t e^{\lambda \theta }, \infty )}(|x|);
\end{equation*}
hence, $\mathcal{G}^{-1 \ast }v \in H^{1}(\mathbb{R})$. Since
\begin{equation*}
\Big \| \frac{\mathcal{F}_{+}[\mathcal{G}^{-1 \ast }v]}{\psi } \Big
\|_{L^{1}(\mathbb{R})} \leq \| \mathcal{G}^{-1 \ast } v \|_{H^{1}(
\mathbb{R})} \Big \|
\frac{(1+\, \cdot \,^{2})^{-\frac{1+\varepsilon
}{2}}}{\psi } \Big \|_{L^{2}(\mathbb{R})},
\end{equation*}
any random measure\index{random ! measure} $\Lambda $ satisfying
Assumption~\ref{ass:basic_assumptions}, (5) yields $\frac{
\mathcal{F}_{+}[\mathcal{G}^{-1 \ast }v]}{\psi } \in L^{1}(\mathbb{R})$
(cf. Proposition~\ref{lem:properties_transferred}, (c)).
\end{expl}

\subsection{A central limit theorem for $\hat{\mathcal{L}}_{W}$}%
\label{sec:clts}
Provided the assumptions of Theorem~\ref{cor:order_of_convergence} are
satisfied,
\begin{equation*}
\err_{W} (v) := \sqrt{n}\ (\hat{\mathcal{L}}_{W} v - \mathcal{L}v)
\end{equation*}
is bounded in mean. In this section, we give conditions under which
$\err_{W}(v)$ is asymptotically Gaussian. For this purpose, introduce
the following notation.

\begin{definition}
\label{def:admissible_function}
Let Assumption~\ref{ass:basic_assumptions} be satisfied and suppose that
condition ($\mathbf{U}_{\beta _{1}})$ is fulfilled for some
$\beta _{1} > 0$. A function $v \in L^{2}(\mathbb{R})$ is called
\textit{admissible} of index $(\xi , \beta _{2})$ if
\begin{enumerate}[(i)]%
\item
$\mathcal{G}^{-1 \ast } v \in H^{\frac{3}{2} - \varepsilon }(
\mathbb{R})$,
\item
$(\mathcal{M}v)(\exp (\, \cdot \,))$, $(\mathcal{M}v)(-\exp (\,
\cdot \,)) \in H^{\beta _{2}}(\mathbb{R})$ for some $\beta _{2} > \beta
_{1}$ and
\item
$|\mathcal{F}_{+}[\mathcal{G}^{-1 \ast }v](x)| \lesssim (1+x^{2})^{-
\xi /2}$ for some $\xi > 2(1-\varepsilon ) - \Big ( \frac{1}{2} -
\varepsilon \Big ) \frac{1+\tau }{2+\tau }$.
\end{enumerate}
The linear subspace of all admissible functions of index $(\xi , \beta
_{2})$ is denoted by $\mathcal{U}(\xi , \beta _{2})$.
\end{definition}

\begin{rem}
\label{rem:admissible_function_discussed}
\begin{enumerate}[(a)]%
\item
The parameters $\varepsilon $ and $\tau $ describe the size of
$\mathcal{U}(\xi , \beta _{2})$. In particular, for larger values of
$\varepsilon $ and $\tau $, the set of admissible functions\index{admissible functions} is
increasing and vice versa.
\item
Assumption~\ref{ass:basic_assumptions}, (5) implies $\varepsilon <
\frac{1}{2}$ (otherwise $\frac{1}{|\psi (x)|} \to 0$ as $|x| \to
\infty $); hence Definition~\ref{def:admissible_function}, (i) yields
that $\mathcal{F}_{+}[\mathcal{G}^{-1 \ast } v] \in L^{1}(\mathbb{R})$.
\item
Clearly, the lower bound for $\xi $ in
\xch{Definition}{Defintion}~\ref{def:admissible_function}, (iii) can be replaced by
$\xi > \frac{7}{4} - \frac{3}{2}\varepsilon $. Nevertheless, since
our purpose is to point out the influence of $\tau $ on the set of
admissible functions,\index{admissible functions} we do not use this simplification.
\item
It immediately follows from formula~\eqref{eq:Gi_inverse_adjoint} that
$\mathcal{G}^{-1 \ast } v \in H^{\delta }(\mathbb{R})$ if and only if
$\mathcal{G}^{-1} v \in H^{\delta }(\mathbb{R})$.
\end{enumerate}
\end{rem}

For any $j \in W$ and any admissible function $v \in \mathcal{U}(
\xi , \beta _{2})$, introduce the random variables
\begin{equation*}
\begin{split}
Z_{j,v}^{(1)}
& = \frac{1}{2\pi } Y_{j} \mathcal{F}_{+} \Big [ \frac{
\mathcal{F}_{+}[\mathcal{G}^{-1 \ast }v](- \, \cdot \,)}{\psi (\,
\cdot \,)} \Big ](Y_{j}) \quad \text{and}
\\
Z_{j,v}^{(2)}
& = \frac{i}{2\pi } \mathcal{F}_{+} \Big [ \mathcal{F}
_{+}[\mathcal{G}^{-1 \ast }v](- \, \cdot \,) \Big ( \frac{1}{\psi }
\Big )^{\prime }\Big ](Y_{j}).
\end{split}
\end{equation*}
In the sequel, it is assumed that the random field\index{random ! field} $Y$ introduced
in~\eqref{eq:random_field_Y} is observed on a sequence $(W_{k})_{k
\in \mathbb{N}}$ of regularly growing\index{regularly growing} observation windows (cf.
Section~\ref{subsec:regularly_growing_sets}). To avoid longer notations,
we drop the index $k$ in this notation and shortly write $W$ instead of
$W_{k}$. Moreover, we denote by $n$ $(= n(k))$ the cardinality of
$W$.

With the previous notation, we now can formulate the main result of this
section.
%
\begin{theo}
\label{theo:clt_univariate}
Fix $m \in \mathbb{N}$, $m > \Delta ^{-1} \textup{diam}( \supp(f)
)$. Let Assumption~\ref{ass:basic_assumptions} be satisfied and suppose
that conditions \textbf{(K1)}--\textbf{(K3)}\vadjust{\goodbreak} are fulfilled. Moreover,
let for some $\eta > 0$ the sequences $a_{n}$ and $b_{n}$ be given by
\begin{equation*}
a_{n} = o \Big ( \Big ( \frac{n}{\sqrt{b_{n}}} \Big )^{\frac{\beta _{1}}{
\beta _{1} - \beta _{2}}} \Big ) \quad \text{and} \quad b_{n} \approx n
^{-\frac{1}{1-2\varepsilon }} (\log n )^{\eta + \frac{1}{1-2\varepsilon
}}, \quad  \text{as} \ n \to \infty .
\end{equation*}
Then, as $W$ is regularly growing\index{regularly growing} to infinity,
\begin{equation*}
\err_{W}(v) \overset{d}{\to } N_{v},
\end{equation*}
for any admissible function $v \in \mathcal{U}(\xi , \beta _{2})$, where
$N_{v}$ is a Gaussian random variable\index{Gaussian random variable} with zero expectation and variance
given by
%
\begin{equation}
\label{eq:asymptotic_variance}
\sigma ^{2}_{v} = \sum _{j \in \mathbb{Z}^{d}: \ \|j\|_{\infty }\leq m}
\mathbb{E}\Big [ \Big ( Z_{j,v}^{(1)} - Z_{j,v}^{(2)} \Big ) \Big ( Z_{0,v}
^{(1)} - Z_{0,v}^{(2)} \Big ) \Big ].
\end{equation}
\end{theo}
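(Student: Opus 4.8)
The plan is to split $\hat{\mathcal{L}}_{W} v - \mathcal{L} v$ into a \emph{regularization} term, a deterministic \emph{kernel bias} and a genuinely \emph{stochastic} term, to show that the first two tend to zero faster than $n^{-1/2}$, and to linearize the stochastic term into a sum of strictly stationary, centered, identically distributed $m$-dependent summands, to which a central limit theorem for $m$-dependent random fields applies. Writing $\mathcal{L} v = \langle \mathcal{G}^{-1\ast} v, uv_{1}\rangle$, $\hat{\mathcal{L}}_{W} v = \langle \mathcal{G}_{n}^{-1\ast} v, \widehat{uv_{1}}\rangle$ and using $\widehat{uv_{1}} = \mathcal{F}_{+}^{-1}[\widehat{\mathcal{F}_{+}[uv_{1}]}\,\mathcal{F}_{+}[K_{b}]]$, one has
\[
\hat{\mathcal{L}}_{W} v - \mathcal{L} v = \langle (\mathcal{G}_{n}^{-1\ast} - \mathcal{G}^{-1\ast}) v,\, \widehat{uv_{1}}\rangle + \langle \mathcal{G}^{-1\ast} v,\, \mathcal{F}_{+}^{-1}[\mathcal{F}_{+}[uv_{1}]\,(\mathcal{F}_{+}[K_{b}] - 1)]\rangle + T_{n},
\]
with $T_{n} = \langle \mathcal{G}^{-1\ast} v,\, \mathcal{F}_{+}^{-1}[(\widehat{\mathcal{F}_{+}[uv_{1}]} - \mathcal{F}_{+}[uv_{1}])\,\mathcal{F}_{+}[K_{b}]]\rangle$; admissibility of $v$ ensures via~\eqref{eq:contained_in_range_condition} that $v \in \textup{Image}(\mathcal{G})$, so $\mathcal{G}^{-1\ast} v$ is well-defined.

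For the first term I would use the first line of the estimate in Lemma~\ref{theo:upper_bound_estimation_error} together with the source-condition bound $\|(\mathcal{G}_{n}^{-1\ast} - \mathcal{G}^{-1\ast}) v\|_{L^{2}(\mathbb{R})} \lesssim a_{n}^{\beta_{2}/\beta_{1} - 1}$ that follows from $(\mathbf{U}_{\beta_{1}})$ and Definition~\ref{def:admissible_function}, (ii) (cf.~\cite{GlRothSpo017}); this gives $\mathbb{E}|\langle (\mathcal{G}_{n}^{-1\ast} - \mathcal{G}^{-1\ast}) v, \widehat{uv_{1}}\rangle| \lesssim (n/b_{n})^{1/2} a_{n}^{\beta_{2}/\beta_{1} - 1}$, which by the hypothesis $a_{n} = o((n/\sqrt{b_{n}})^{\beta_{1}/(\beta_{1}-\beta_{2})})$ is $o(n^{-1/2})$, exactly as in the proof of Theorem~\ref{cor:order_of_convergence}. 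The kernel bias I would treat by Parseval's identity, property \textbf{(K3)}, the bound $|\mathcal{F}_{+}[uv_{1}](x)| \lesssim (1+x^{2})^{-1/2}$ (Assumption~\ref{ass:basic_assumptions}, (4) and Proposition~\ref{lem:properties_transferred}, (a)) and the integrability of $|x|^{s}\mathcal{F}_{+}[\mathcal{G}^{-1\ast} v]$ for $s$ slightly below $1-\varepsilon$ (from $\mathcal{G}^{-1\ast} v \in H^{3/2-\varepsilon}(\mathbb{R})$, Definition~\ref{def:admissible_function}, (i)), which bounds it by $b_{n}^{s}$ for some $s > \tfrac{1}{2}-\varepsilon$; with the prescribed bandwidth $b_{n} \approx n^{-1/(1-2\varepsilon)}(\log n)^{\eta + 1/(1-2\varepsilon)}$ this is again $o(n^{-1/2})$.

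The heart of the proof is $T_{n}$. By Parseval, $2\pi T_{n} = \int_{\mathbb{R}}\mathcal{F}_{+}[\mathcal{G}^{-1\ast} v](-x)\,\bigl(\widehat{\mathcal{F}_{+}[uv_{1}]}(x) - \mathcal{F}_{+}[uv_{1}](x)\bigr)\,\mathcal{F}_{+}[K_{b}](x)\,dx$. On the event $\{|\hat{\psi}(x)| > n^{-1/2}\}$ one has $\widehat{\mathcal{F}_{+}[uv_{1}]}(x) = \hat{\theta}(x)/\hat{\psi}(x)$ and, using $\theta = -i\psi'$ and $(1/\psi)' = -\psi'/\psi^{2}$, the expansion
\[
\frac{\hat{\theta}}{\hat{\psi}} - \frac{\theta}{\psi} = \frac{\hat{\theta} - \theta}{\psi} - i(\hat{\psi} - \psi)\Bigl(\frac{1}{\psi}\Bigr)' - \frac{\bigl[(\hat{\theta} - \theta)\psi - \theta(\hat{\psi} - \psi)\bigr](\hat{\psi} - \psi)}{\hat{\psi}\,\psi^{2}}.
\]
Inserting $\hat{\psi}(x) = n^{-1}\sum_{j\in W}e^{ixY_{j}}$ and $\hat{\theta}(x) = n^{-1}\sum_{j\in W}Y_{j}e^{ixY_{j}}$ into the two linear terms, the part not depending on the $Y_{j}$ cancels pointwise (since $-\theta/\psi + i\psi(1/\psi)' \equiv 0$) and, \emph{after replacing $\mathcal{F}_{+}[K_{b}]$ by $1$}, the remaining part is exactly $n^{-1}\sum_{j\in W}(Z_{j,v}^{(1)} - Z_{j,v}^{(2)})$; a short computation using $\mathbb{E}[e^{ixY_{0}}] = \psi(x)$, $\mathbb{E}[Y_{0}e^{ixY_{0}}] = -i\psi'(x)$ and $(1/\psi)'\psi = -\psi'/\psi$ shows $\mathbb{E}Z_{0,v}^{(1)} = \mathbb{E}Z_{0,v}^{(2)}$, so these summands are centered. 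It then remains to establish three negligibility facts: (a) reinserting $\mathcal{F}_{+}[K_{b}]$ in the linear term changes it by $o_{\mathbb{P}}(n^{-1/2})$, which follows from \textbf{(K3)}, the $L^{2}$-property of $x\mapsto (1+x^{2})^{-1/2+\varepsilon}/\psi(x)$ (Proposition~\ref{lem:properties_transferred}, (c)) combined with $\mathcal{G}^{-1\ast} v \in H^{1-2\varepsilon}(\mathbb{R})$, dominated convergence, and the variance estimate below; (b) the quadratic remainder, multiplied by $\sqrt{n}$, is $o_{\mathbb{P}}(1)$ — one truncates the $Y_{j}$ at a suitable level, controls the tail by $\mathbb{E}|Y_{0}|^{2+\tau} < \infty$ (Assumption~\ref{ass:basic_assumptions}, (3), Proposition~\ref{lem:properties_transferred}, (b)) and the bulk by a Bernstein-type inequality for $m$-dependent random fields (cf.~\cite{Heinrich,dedecker}), reducing the claim to an estimate on $\int_{|x|\le b_{n}^{-1}}|\mathcal{F}_{+}[\mathcal{G}^{-1\ast} v](x)|\,|\psi(x)|^{-2}\,dx$ that is governed by the decay exponent $\xi$, and the lower bound $\xi > 2(1-\varepsilon) - (\tfrac{1}{2}-\varepsilon)\tfrac{1+\tau}{2+\tau}$ of Definition~\ref{def:admissible_function}, (iii) is precisely what makes this vanish; (c) the contribution of $\{|\hat{\psi}(x)| \le n^{-1/2}\}$ is negligible, again by the exponential inequality, since $b_{n}$ is tuned so that the relevant values of $|\psi(x)|$ exceed $n^{-1/2}$ by a logarithmic factor.

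After these reductions, $\err_{W}(v) = n^{-1/2}\sum_{j\in W}\zeta_{j} + o_{\mathbb{P}}(1)$ with $\zeta_{j} := Z_{j,v}^{(1)} - Z_{j,v}^{(2)}$, which is a strictly stationary, centered random field that is $m$-dependent because each $\zeta_{j}$ is a measurable functional of $Y_{j}$ alone and $m > \Delta^{-1}\textup{diam}(\supp(f))$ (Lemma~\ref{lem:m_depend_X}). The pointwise bounds $|Z_{0,v}^{(1)}| \le (2\pi)^{-1}|Y_{0}|\,\|\mathcal{F}_{+}[\mathcal{G}^{-1\ast} v](-\cdot)/\psi\|_{L^{1}(\mathbb{R})}$ and $|Z_{0,v}^{(2)}| \le (2\pi)^{-1}\|\mathcal{F}_{+}[\mathcal{G}^{-1\ast} v](-\cdot)(1/\psi)'\|_{L^{1}(\mathbb{R})}$, whose $L^{1}$-norms are finite by Definition~\ref{def:admissible_function}, (i) and (iii), Proposition~\ref{lem:properties_transferred}, (c) and the boundedness of $\mathcal{F}_{+}[uv_{1}]$, yield $\zeta_{0} \in L^{2+\tau}(\Omega)$. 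Since $W$ is regularly growing, $m$-dependence and stationarity give $n^{-1}\operatorname{Var}(\sum_{j\in W}\zeta_{j}) = \sum_{\|l\|_{\infty}\le m}(|W\cap(W-l)|/n)\,\mathbb{E}[\zeta_{0}\zeta_{l}] \to \sum_{\|l\|_{\infty}\le m}\mathbb{E}[\zeta_{0}\zeta_{l}] = \sigma_{v}^{2}$, because $|W\cap(W-l)|/n \to 1$ for each fixed $l$, the discrepancy being $\lesssim |\partial W|/|W| \to 0$ (this is where the arguments of~\cite{Bulinski07} enter). Finally I would apply the normal approximation theorem for locally (here $m$-) dependent summands of Chen and Shao~\cite{Chen2004} to $(\zeta_{j})_{j\in W}$, enumerated as a one-dimensional sequence, using the uniform $(2+\tau)$-moment and the linear growth of the variance, to conclude $n^{-1/2}\sum_{j\in W}\zeta_{j} \overset{d}{\to} N(0,\sigma_{v}^{2})$; Slutsky's lemma then finishes the argument. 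The main obstacle is the bundle of estimates (a)--(c): controlling, after multiplication by $\sqrt{n}$, the quadratic stochastic remainder and the truncation event against the weight $1/\psi$ that grows in $x$, which requires the interplay of the decay parameter $\varepsilon$ of $\psi$, the moment exponent $\tau$, the decay exponent $\xi$ and the bandwidth $b_{n}$ to be balanced exactly, and is where exponential inequalities for weakly dependent random fields replace their i.i.d.\ counterparts.
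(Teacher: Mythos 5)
Your plan is, in substance, the paper's own proof with the terms regrouped: your regularization term is the paper's $E_{5}$, your kernel bias is $E_{3}$, and your $T_{n}$ collects the main stochastic term $E_{1}$, the remainder $E_{2}$ and the contribution of the event $\{|\hat{\psi}|\leq n^{-1/2}\}$ ($E_{4}$); the linearization of $\hat{\theta}/\tilde{\psi}-\theta/\psi$, the truncation of the $Y_{j}$ plus Bernstein-type inequalities for $m$-dependent fields, the sup-bounds on $[-b_{n}^{-1},b_{n}^{-1}]$, the role of the exponent $\xi$ from Definition~\ref{def:admissible_function}, (iii), the covariance limit over regularly growing $W$, and the Chen--Shao normal approximation are exactly the ingredients of Theorems~\ref{theo:asymptotic_normality_of_main_stoch_term}, \ref{theo:remainder_negligible_theorem}, \ref{theo:conv_rate} and Corollaries~\ref{cor:sup_bn}, \ref{cor:kappa_n}. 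The one genuine (and legitimate) variation is at the CLT step: you strip the kernel and apply Chen--Shao to the $n$-independent summands $Z_{j,v}^{(1)}-Z_{j,v}^{(2)}$, paying for this with the extra estimate (your step (a)) that reinserting $\mathcal{F}_{+}[K_{b_{n}}]$ in the linear term costs only $o_{\mathbb{P}}(n^{-1/2})$, and with a direct computation of $n^{-1}\operatorname{Var}(\sum_{j\in W}\zeta_{j})$ via $|W\cap(W-l)|/|W|\to 1$; the paper instead keeps the kernel, works with the triangular array $Z_{j,v,n}^{(k)}$ and proves convergence of covariances (Lemmas~\ref{lem:representation_of_main_stoch_term}, \ref{lem:covariance_convergence_lemma} and \ref{lem:conv_of_covariance_fct_cubes_version}). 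Both routes are viable; yours trades the covariance-convergence lemma for an $L^{2}$ bound on the kernel-removal error.

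There is, however, one concrete omission: the drift $\gamma$. Theorem~\ref{theo:clt_univariate} is stated for the model \eqref{eq:psi} with an arbitrary (known) $\gamma$, and $\hat{\theta}/\tilde{\psi}$ estimates $\theta/\psi=\gamma+\mathcal{F}_{+}[uv_{1}]$, not $\mathcal{F}_{+}[uv_{1}]$ itself. Your passage from $\widehat{\mathcal{F}_{+}[uv_{1}]}-\mathcal{F}_{+}[uv_{1}]$ to the expansion of $\hat{\theta}/\hat{\psi}-\theta/\psi$ therefore implicitly assumes $\gamma=0$; for $\gamma\neq 0$ an additional $\gamma$-dependent term appears after pairing with $\mathcal{F}_{+}[\mathcal{G}^{-1\ast}v]$ and must be shown to be $o(n^{-1/2})$. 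This is precisely what Section~\ref{sec:negelcting_the_drift} of the paper does, via the shifted quantities $\psi_{\ast}$, $\theta_{\ast}$, the identity \eqref{eq:discard_gamma}, property \textbf{(K3)} together with $\mathcal{G}^{-1\ast}v\in H^{1}(\mathbb{R})$ (guaranteed here by Definition~\ref{def:admissible_function}, (i)), and the bound \eqref{eq:propb_estimation} on $\mathbb{P}(|\hat{\psi}_{\ast}(x)|\leq n^{-1/2})$. Your argument is complete in outline for $\gamma=0$ (with the hard estimates (b)--(c) still to be carried out, as you acknowledge), but a proof of the theorem as stated must add this drift step.
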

A proof of Theorem~\ref{theo:clt_univariate} can be found in
Section~\ref{proof:clt_univariate}.

\begin{rem}
Unfortunately, we could not provide a rate for the convergence\break
$\err_{W}(v) \overset{d}{\to } N_{v}$ in
Theorem~\ref{theo:clt_univariate}. Therefore, it would be sufficient to
provide, e.g., $L^{1}(\Omega , \mathbb{P})$-rates for the convergence
$\sup _{x} |\hat{\psi }(x) - \psi (x)|$, $\sup _{x} |\hat{\theta }(x) -
\theta (x)| \to 0$ (as $|W| \to \infty $), that seems to be a hard
problem in the dependent observations setting.
\end{rem}

\begin{cor}
\label{cor:clt_multivariate}
Let the assumptions of Theorem~\ref{theo:clt_univariate} hold true.
Then, as $W$ is regularly growing\index{regularly growing} to infinity,
\begin{equation*}
(\err_{W}(v_{1}), \dots , \err_{W}(v_{k}))^{\top }\overset{d}{\to } N
_{v_{1},\dots ,v_{k}},
\end{equation*}
for any $v_{1} \in \mathcal{U}(\xi _{1}, \beta _{2}^{(1)}), \dots , v
_{k} \in \mathcal{U}(\xi _{k}, \beta _{2}^{(k)})$, where $N_{v_{1},
\dots ,v_{k}}$ is a centered Gaussian random vector with covariance
matrix $(\Sigma _{s,t})_{s,t=1,\dots ,k}$ given by
\begin{equation*}
\Sigma _{s,t} = \sum _{j \in \mathbb{Z}^{d}: \ \|j\|_{\infty }\leq m}
\mathbb{E}\Big [ \Big ( Z_{j,v_{s}}^{(1)} - Z_{j,v_{s}}^{(2)} \Big )
\Big ( Z_{0,v_{t}}^{(1)} - Z_{0,v_{t}}^{(2)} \Big ) \Big ].
\end{equation*}
\end{cor}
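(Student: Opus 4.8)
The plan is to reduce the statement to the univariate Theorem~\ref{theo:clt_univariate} via the Cram\'er--Wold device. Fix $\lambda = (\lambda_1,\dots,\lambda_k)^\top \in \mathbb{R}^k$ and consider the linear combination $\sum_{s=1}^k \lambda_s \err_W(v_s)$. Since $v \mapsto \widehat{uv_0} = \mathcal{G}_n^{-1}\widehat{uv_1}$ enters $\hat{\mathcal{L}}_W v = \langle v, \widehat{uv_0}\rangle_{L^2(\mathbb{R})}$ linearly, both $\hat{\mathcal{L}}_W$ and $\mathcal{L}$ are linear functionals of $v$, hence so is $\err_W(\cdot) = \sqrt{n}(\hat{\mathcal{L}}_W - \mathcal{L})(\cdot)$. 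Consequently
\begin{equation*}
\sum_{s=1}^k \lambda_s \err_W(v_s) = \err_W\Big( \sum_{s=1}^k \lambda_s v_s \Big) =: \err_W(v),
\end{equation*}
and it suffices to prove that $\err_W(v) \overset{d}{\to} N(0, \lambda^\top \Sigma \lambda)$ as $W$ grows regularly to infinity, with $\Sigma = (\Sigma_{s,t})_{s,t=1,\dots,k}$.

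The first step is to check that $v = \sum_s \lambda_s v_s$ is again admissible. The Sobolev spaces $H^\delta(\mathbb{R})$ are linear subspaces, the operators $\mathcal{M}$, $\mathcal{G}^{-1\ast}$ and $\mathcal{F}_+$ are linear, and the maps $v \mapsto (\mathcal{M}v)(\pm\exp(\, \cdot \,))$ are linear; hence conditions (i) and (ii) of Definition~\ref{def:admissible_function} pass to $v$ with $\beta_2 := \min_{s} \beta_2^{(s)}$, which still satisfies $\beta_2 > \beta_1$. For condition (iii), the triangle inequality together with $1+x^2 \ge 1$ gives $|\mathcal{F}_+[\mathcal{G}^{-1\ast}v](x)| \le \sum_s |\lambda_s|\,|\mathcal{F}_+[\mathcal{G}^{-1\ast}v_s](x)| \lesssim (1+x^2)^{-\xi/2}$ with $\xi := \min_s \xi_s$, which still exceeds the threshold $2(1-\varepsilon) - (\tfrac12 - \varepsilon)\tfrac{1+\tau}{2+\tau}$ since each $\xi_s$ does and that threshold depends only on the fixed parameters $\varepsilon,\tau$. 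Thus $v \in \mathcal{U}(\xi, \beta_2)$. The only point requiring minor attention is that the cutoff condition $a_n = o\big((n/\sqrt{b_n})^{\beta_1/(\beta_1-\beta_2)}\big)$ must be read with this common index $\beta_2 = \min_s \beta_2^{(s)}$; since the exponent $\beta_1/(\beta_1-\beta_2)$ is smallest (most negative) precisely for this choice, the corresponding constraint on $a_n$ is the most restrictive one and is already implied by the assumptions of Theorem~\ref{theo:clt_univariate} holding for each individual $v_s$ (the sequences $a_n,b_n$ are part of the estimator and do not depend on the test function).

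Applying Theorem~\ref{theo:clt_univariate} to $v$ then yields $\err_W(v) \overset{d}{\to} N_v$, a centered Gaussian with variance $\sigma_v^2$ as in~\eqref{eq:asymptotic_variance}. It remains to identify $\sigma_v^2$ with $\lambda^\top \Sigma \lambda$. By linearity of $v \mapsto Z_{j,v}^{(1)}$ and $v \mapsto Z_{j,v}^{(2)}$ (immediate from their definitions, using linearity of $\mathcal{F}_+$, $\mathcal{G}^{-1\ast}$ and of differentiation), we have $Z_{j,v}^{(i)} = \sum_s \lambda_s Z_{j,v_s}^{(i)}$ for $i=1,2$, so expanding the product in~\eqref{eq:asymptotic_variance} bilinearly gives
\begin{equation*}
\sigma_v^2 = \sum_{s,t=1}^k \lambda_s \lambda_t \sum_{j \in \mathbb{Z}^d:\ \|j\|_\infty \le m} \mathbb{E}\Big[ \big( Z_{j,v_s}^{(1)} - Z_{j,v_s}^{(2)} \big)\big( Z_{0,v_t}^{(1)} - Z_{0,v_t}^{(2)} \big) \Big] = \sum_{s,t=1}^k \lambda_s \lambda_t \Sigma_{s,t} = \lambda^\top \Sigma \lambda .
\end{equation*}
Hence $\sum_s \lambda_s \err_W(v_s) \overset{d}{\to} N(0, \lambda^\top \Sigma \lambda)$ for every $\lambda \in \mathbb{R}^k$, and the Cram\'er--Wold theorem yields $(\err_W(v_1),\dots,\err_W(v_k))^\top \overset{d}{\to} N_{v_1,\dots,v_k}$ with covariance matrix $(\Sigma_{s,t})_{s,t=1,\dots,k}$. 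Since the argument reduces to a single application of the univariate theorem plus bilinearity, there is no genuine analytic obstacle; the only thing to get right is the bookkeeping of the admissibility indices and of the bandwidth conditions described above.
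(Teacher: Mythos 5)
Your proposal is correct and follows essentially the same route as the paper: the Cram\'er--Wold reduction to Theorem~\ref{theo:clt_univariate} via linearity of $\err_W$, admissibility of $v=\sum_s\lambda_s v_s$ with indices $(\min_s\xi_s,\min_s\beta_2^{(s)})$, and identification of $\sigma_v^2=\lambda^\top\Sigma\lambda$ by linearity of $\mathcal{F}_+$ and $\mathcal{G}^{-1\ast}$. Your extra bookkeeping on the bandwidth condition $a_n$ is a fair (and correct) elaboration of a point the paper leaves implicit.
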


\begin{proof}
Suppose $v_{1} \in \mathcal{U}(\xi _{1}, \beta _{2}^{(1)}),\dots ,v_{k}
\in \mathcal{U}(\xi _{k}, \beta _{2}^{(k)})$ and, for arbitrary numbers
$\lambda _{1},\dots ,\lambda _{k} \in \mathbb{R}$, let $v = \sum _{l=1}
^{k} \lambda _{l} v_{l}$. Then a simple calculation yields
\begin{equation*}
\sum _{l=1}^{k} \lambda _{l} \err_{W}(v_{l}) = \sqrt{n} \ (
\hat{\mathcal{L}}_{W} v - \mathcal{L}v).
\end{equation*}
Since $v \in \mathcal{U}(\min _{l}\xi _{l}, \min _{l} \beta _{2}^{(l)})$,
by Theorem~\ref{theo:clt_univariate}, $\sqrt{n} \ (
\hat{\mathcal{L}}_{W} v - \mathcal{L}v) \overset{d}{\to } N_{v}$, where
$N_{v}$ is a Gaussian random variable\index{Gaussian random variable} with zero expectation and variance
given in~\eqref{eq:asymptotic_variance}. Now, let $(T_{1}, \dots , T
_{k})^{\top }$ be a zero mean Gaussian random vector with covariance
given by $(\Sigma _{s,t})_{s,t=1,\dots ,k}$. Using linearity of
$\mathcal{F}_{+}$ and $\mathcal{G}^{-1 \ast }$, a short computation
shows that
\begin{equation*}
N_{v} \overset{d}{=} \sum _{l=1}^{k} \lambda _{l} T_{l};
\end{equation*}
hence, the assertion follows by the Cram\'{e}r--Wold theorem
(cf.~\cite{billingsley2012}).
\end{proof}

\section{Proof of Theorem~\ref{theo:clt_univariate}}%
\label{proof:clt_univariate}
In order to prove Theorem~\ref{proof:clt_univariate}, we adopt the
strategy of the proof of~\cite[Theorem 2]{nickl}. Nevertheless, the
main difficulty in our setting is that the observations $(Y_{j})_{j
\in W}$ are not independent; hence the classical theory cannot be
applied here. Instead, we use asymptotic results for partial sums of
$m$-dependent random fields (cf.~\cite{Chen2004}) in combination
with the theory developed by Bulinski and Shashkin
in~\cite{Bulinski07} for weakly dependent random fields.\index{random ! field}

We start with the following lemma.

\begin{lem}
\label{lem:important_lemma}
Let $\gamma = 0$ and suppose that $v \in \mathcal{U}(\xi , \beta _{2})$
is an admissible function. Then Assumption~\ref{ass:basic_assumptions}
implies:
\begin{enumerate}%
\item
$xP$ has a bounded Lebesgue density on $\mathbb{R}$, where $P$ denotes the
distribution of $X(0)$.
\item
$\Big ( \frac{1}{\psi } \Big )^{\prime }\in L^{2}(\mathbb{R}) \cap L
^{\infty }(\mathbb{R})$ and $\frac{1}{|\psi (x)|} \lesssim (1+|x|)^{
\frac{1}{2} - \varepsilon }$ for all $x \in \mathbb{R}$.
\item
$\mathcal{F}_{+}[\mathcal{G}^{-1 \ast }v]$, $\frac{\mathcal{F}_{+}[
\mathcal{G}^{-1 \ast }v]}{\psi } \in L^{1}(\mathbb{R}) \cap L^{2}(
\mathbb{R})$.
\end{enumerate}
\end{lem}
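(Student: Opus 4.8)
The plan is to move to the Fourier side throughout. Differentiating~\eqref{eq:psi} gives $-i\psi'/\psi=\gamma+\mathcal{F}_+[uv_1]$, so under $\gamma=0$ we have $\psi'=i\psi\,\mathcal{F}_+[uv_1]$ and hence $(1/\psi)'=-\psi'/\psi^2=-i\,\mathcal{F}_+[uv_1]/\psi$, while $\theta:=-i\psi'=\psi\,\mathcal{F}_+[uv_1]$. The recurring inputs are: the decay bound $|\mathcal{F}_+[uv_1](x)|\lesssim(1+x^2)^{-1/2}$, which is Assumption~\ref{ass:basic_assumptions},(4) combined with~\eqref{eq:Fouriertrafo_uv_1}; the moment transfer $\int_{\mathbb{R}}|x|^{2+\tau}|(uv_1)(x)|\,dx<\infty$ of Proposition~\ref{lem:properties_transferred},(b); the relation $|\psi(x)|=\exp\big(-\int_0^x\textup{Im}\,\mathcal{F}_+[uv_1](y)\,dy\big)$ recalled in Section~\ref{section:clt}; and the reformulation of Assumption~\ref{ass:basic_assumptions},(5) from Proposition~\ref{lem:properties_transferred},(c), namely $(1+\cdot^2)^{-1/2+\varepsilon}/\psi\in L^2(\mathbb{R})$ (whence $\varepsilon<1/2$, cf. Remark~\ref{rem:admissible_function_discussed},(b)).

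I would prove (2) first. The identity $(1/\psi)'=-i\,\mathcal{F}_+[uv_1]/\psi$ reduces it to the pointwise bound on $1/|\psi|$: feeding $|\mathcal{F}_+[uv_1](x)|\lesssim(1+x^2)^{-1/2}$ and $1/|\psi(x)|\lesssim(1+|x|)^{1/2-\varepsilon}$ into it yields $|(1/\psi)'(x)|\lesssim(1+|x|)^{-1/2-\varepsilon}$, which is bounded and, since $1+2\varepsilon>1$, square-integrable. For the bound on $1/|\psi|$ itself I would use that $x\mapsto\log|\psi(x)|$ has derivative $-\textup{Im}\,\mathcal{F}_+[uv_1]$ of size $\lesssim(1+x^2)^{-1/2}$, so $1/|\psi|$ is even and varies by at most a bounded factor over unit intervals; combining this with monotonicity of $1/|\psi|$ on half-lines (sign of $\textup{Im}\,\mathcal{F}_+[uv_1]$) and the integrability $\int_{\mathbb{R}}(1+x^2)^{-1+2\varepsilon}/|\psi(x)|^2\,dx<\infty$ of Proposition~\ref{lem:properties_transferred},(c), a dyadic comparison gives $1/|\psi(x)|^2\lesssim(1+x^2)^{1-2\varepsilon}/(1+|x|)$, which implies the claim.

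Part (3) is then essentially bookkeeping, using $v\in\mathcal{U}(\xi,\beta_2)$. Definition~\ref{def:admissible_function},(iii) gives $|\mathcal{F}_+[\mathcal{G}^{-1\ast}v](x)|\lesssim(1+x^2)^{-\xi/2}$, and one checks that the prescribed lower bound $2(1-\varepsilon)-(\tfrac12-\varepsilon)\tfrac{1+\tau}{2+\tau}$ for $\xi$ already exceeds $\tfrac32-\varepsilon>1$ (the difference equals $(\tfrac12-\varepsilon)\tfrac{1}{2+\tau}>0$ since $\varepsilon<\tfrac12$). Hence $(1+x^2)^{-\xi/2}\in L^1(\mathbb{R})\cap L^2(\mathbb{R})$, i.e. $\mathcal{F}_+[\mathcal{G}^{-1\ast}v]\in L^1(\mathbb{R})\cap L^2(\mathbb{R})$ (alternatively from $\mathcal{G}^{-1\ast}v\in H^{3/2-\varepsilon}(\mathbb{R})$ and $\tfrac32-\varepsilon>\tfrac12$). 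Multiplying by the bound from (2), $|\mathcal{F}_+[\mathcal{G}^{-1\ast}v](x)/\psi(x)|\lesssim(1+|x|)^{-\xi+1/2-\varepsilon}$, and since $\xi>\tfrac32-\varepsilon$ the exponent $\xi-\tfrac12+\varepsilon$ exceeds $1$; thus $\mathcal{F}_+[\mathcal{G}^{-1\ast}v]/\psi\in L^1(\mathbb{R})\cap L^2(\mathbb{R})$.

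Finally (1). The Fourier transform of the finite signed measure $xP$ is $\int e^{itx}\,x\,P(dx)=-i\psi'(t)=\psi(t)\mathcal{F}_+[uv_1](t)=\theta(t)$, so $\theta\in L^1(\mathbb{R})$ would give a bounded continuous density by inversion, with sup-norm $\le\tfrac{1}{2\pi}\|\theta\|_{L^1}$. However, $|\theta(t)|\le|\mathcal{F}_+[uv_1](t)|\lesssim(1+t^2)^{-1/2}$ just misses $L^1$, and closing the gap requires decay of $\psi$, which is not forced by the assumptions (e.g. the compound-Poisson regime $v_1(\mathbb{R})<\infty$). I would therefore argue directly on the law of $X(0)=\int f(-s)\,\Lambda(ds)$: split $\Lambda$ into countably many finite-intensity pieces $\Lambda^{(k)}$, so that $X(0)=\sum_k X^{(k)}(0)$ with $\sum_k\mathbb{E}|X^{(k)}(0)|<\infty$ (from Assumption~\ref{ass:basic_assumptions},(2)--(3)), hence $P$ is the corresponding infinite convolution; using $|(uv_0)(x)|=|xv_0(x)|\le\|uv_0\|_\infty$ (Assumption~\ref{ass:basic_assumptions},(2)) and~\eqref{eq:levy-characterisitic-of-field} the ``first-moment density'' $x\cdot(\text{jump density of the }k\text{-th piece})$ is bounded uniformly by $\nu_d(\supp(f))\|uv_0\|_\infty$, the compound-Poisson expansion then bounds the density of each $xP_{X^{(k)}(0)}$ by the same quantity, and one propagates this through the convolution via $x(\mu\ast\nu)=(x\mu)\ast\nu+\mu\ast(x\nu)$ together with the summability of $\sum_k\mathbb{E}|X^{(k)}(0)|$. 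Securing that $xP$ is absolutely continuous with a bounded density in every regime — in particular controlling the infinite convolution when $v_1$ is infinite — is the main obstacle; by contrast (2) and (3) are comparatively routine once the pointwise control of $1/|\psi|$ is available.
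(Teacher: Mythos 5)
Your reduction of part (2) to the pointwise bound $1/|\psi (x)| \lesssim (1+|x|)^{1/2-\varepsilon }$ is the right target, but the argument you give for that bound does not work: $1/|\psi |$ is \emph{not} monotone on half-lines, because $\textup{Im}\,\mathcal{F}_{+}[uv_{1}]$ has no fixed sign in general (think of $v_{1}$ concentrated near a point $c>0$, where $|\psi (t)|$ behaves like $\exp \{\lambda (\cos (ct)-1)\}$ and oscillates). Without monotonicity, your unit-interval Harnack estimate combined with $\int _{\mathbb{R}} (1+x^{2})^{-1+2\varepsilon }|\psi (x)|^{-2}dx<\infty $ (Proposition~\ref{lem:properties_transferred}, (c)) only yields $1/|\psi (x)| \lesssim (1+|x|)^{1-2\varepsilon }$, which is strictly weaker than the claimed exponent $\tfrac{1}{2}-\varepsilon $ (recall $\varepsilon <\tfrac12$) and would not support the later choice of $b_{n}$. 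The paper's route, which you almost have in hand, is to integrate the identity you wrote down: $1/\psi (x) = 1+\int _{0}^{x} (1/\psi )'(t)\,dt$ with $|(1/\psi )'| = |\mathcal{F}_{+}[uv_{1}]|/|\psi | \lesssim (1+t^{2})^{-1/2}/|\psi (t)|$ (Assumption~\ref{ass:basic_assumptions}, (4)); splitting $(1+t^{2})^{-1/2}=(1+t^{2})^{-\varepsilon /2}(1+t^{2})^{-(1-\varepsilon )/2}$ and applying Cauchy--Schwarz against the $L^{2}$-norm of $(1+\cdot ^{2})^{-(1-\varepsilon )/2}/\psi $ (finite by Assumption~\ref{ass:basic_assumptions}, (5) via Proposition~\ref{lem:properties_transferred}, (c)) gives $\bigl (\int _{0}^{|x|}(1+t^{2})^{-\varepsilon }dt\bigr )^{1/2} \lesssim (1+|x|)^{1/2-\varepsilon }$, which is exactly the asserted bound; the claims on $(1/\psi )'$ then follow as you say.

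In part (1) you also miss the short argument and replace it by an incomplete one. Having computed $\mathcal{F}_{+}[xP]=\theta =\psi \,\mathcal{F}_{+}[uv_{1}]$, you conclude that a bounded density would require $\theta \in L^{1}(\mathbb{R})$ and, since that fails, you embark on a splitting of $\Lambda $ into finite-intensity pieces and an infinite-convolution propagation that you yourself flag as the unresolved main obstacle. The missing observation is that $uv_{1}\in L^{1}(\mathbb{R})\cap L^{\infty }(\mathbb{R})$ (Proposition~\ref{lem:properties_transferred}, (a)), so $\mu (dx)=(uv_{1})(x)dx$ is a finite signed measure and $\theta =\mathcal{F}_{+}[P]\,\mathcal{F}_{+}[uv_{1}]=\mathcal{F}_{+}[\mu \ast P]$; by uniqueness of Fourier transforms of finite signed measures, $xP=\mu \ast P$, whose density $\int _{\mathbb{R}}(uv_{1})(\cdot -y)P(dy)$ is bounded by $\|uv_{1}\|_{L^{\infty }(\mathbb{R})}$ — no Fourier inversion, hence no integrability of $\theta $, is needed. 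Part (3) of your proposal is acceptable as a variant (it uses the decay condition of Definition~\ref{def:admissible_function}, (iii) where the paper uses the Sobolev condition (i) plus Cauchy--Schwarz), but as written it leans on the pointwise bound of part (2), which your argument does not actually deliver.
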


\begin{proof}
\begin{enumerate}%
\item
Let $\mu (dx) = (uv_{1})(x) dx$. By
Proposition~\ref{lem:properties_transferred}, (a), $uv_{1} \in L^{1}(
\mathbb{R})$; hence, $\mu $ defines a finite signed measure on
$\mathbb{R}$. Since $\theta = \psi \mathcal{F}_{+}[uv_{1}]$, we conclude
that
\begin{equation*}
\mathcal{F}_{+}[xP](t) = \theta (t) = \mathcal{F}_{+}[P](t)
\mathcal{F}_{+}[uv_{1}](t) = \mathcal{F}_{+}[\mu \ast P](t),
\end{equation*}
i.e. $xP(dx) = (\mu \ast P)(dx)$; thus, $xP$ has the density given by
$\frac{d[xP]}{dx} =\break  \int _{\mathbb{R}} (uv_{1})(x-y) P(dy)$ and
consequently $\|\frac{d[xP]}{dx}\|_{L^{\infty }(\mathbb{R})} \leq \|uv
_{1}\|_{L^{\infty }(\mathbb{R})}$.
\item
By Assumption~\ref{ass:basic_assumptions}, (4), (5),
Proposition~\ref{lem:properties_transferred}, (a), (c) and the
\xch{Cauchy--Schwarz}{Cauchy-Schwart} inequality, we obtain for any $x \in \mathbb{R}$,
\begin{equation*}
\begin{split}
\frac{1}{|\psi (x)|} ={}
& 1 + \int _{0}^{x} \Big ( \frac{1}{\psi }
\Big )^{\prime }(t) dt
= 1 + \int _{0}^{x}
\frac{|\theta (t)|}{|\psi (t)|^{2}} dt
\\
={}
& 1 + \int _{0}^{x} \frac{|\mathcal{F}_{+}[uv_{1}](t)|}{|\psi (t)|}
dt
\\
\lesssim{}
& 1 + \int _{0}^{x} (1+t^{2})^{-\frac{\varepsilon }{2}} \frac{(1+t
^{2})^{-\frac{1-\varepsilon }{2}}}{|\psi (t)|} dt
\\
\leq{}
& 1 + \Big \| \frac{(1+\, \cdot \,^{2})^{-
\frac{1-\varepsilon }{2}}}{\psi } \Big \|_{L^{2}(\mathbb{R})}
\Big (
\int _{0}^{x} (1+t^{2})^{-\varepsilon } dt \Big )^{1/2}
\\
\lesssim{}
& (1+|x|)^{\frac{1}{2}-\varepsilon }.
\end{split}
\end{equation*}
Further, we have for any $x \in \mathbb{R}$,
\begin{equation*}
\left | \Big ( \frac{1}{\psi } \Big )^{\prime }(x) \right |
= \frac{|
\mathcal{F}_{+}[uv_{1}](x)|}{|\psi (x)|} \lesssim (1+|x|)^{-
\frac{1}{2}-\varepsilon }.
\end{equation*}
The last expression is bounded and square integrable, hence
$\Big ( \frac{1}{\psi } \Big )^{\prime }\in L^{2}(\mathbb{R}) \cap L
^{\infty }(\mathbb{R})$.
\item
$\mathcal{F}_{+}[\mathcal{G}^{-1 \ast }v] \in L^{1}(\mathbb{R})
\cap L^{2}(\mathbb{R})$ immediately follows from
Definition~\ref{def:admissible_function}, (i) (cf.
Remark~\ref{rem:admissible_function_discussed}, (b)). Moreover, by
Proposition~\ref{lem:properties_transferred}, (a), we find that
\begin{equation*}
\begin{split}
\int _{\mathbb{R}}\frac{|\mathcal{F}_{+}[\mathcal{G}^{-1 \ast }v](x)|}{|
\psi (x)|}dx \leq \|\mathcal{G}^{-1 \ast }v\|_{H^{1-\varepsilon }(
\mathbb{R})}
\Big \| \frac{(1+\, \cdot \,^{2})^{-\frac{1-\varepsilon
}{2}}}{\psi } \Big \|_{L^{2}(\mathbb{R})},
\end{split}
\end{equation*}
where the latter is finite due to
Definition~\ref{def:admissible_function}, (i). The bound in part (2)
finally yields
\begin{equation*}
\begin{split}
\int _{\mathbb{R}}\frac{|\mathcal{F}_{+}[\mathcal{G}^{-1 \ast }v](x)|^{2}}{|
\psi (x)|^{2}}dx \lesssim{}
& \int _{\mathbb{R}}|\mathcal{F}_{+}[
\mathcal{G}^{-1 \ast }v](x)|^{2} (1+|x|^{2})^{\frac{1}{2}-\varepsilon
} dx
\\
={}
& \|\mathcal{G}^{-1 \ast }v\|_{H^{1-2\varepsilon }(\mathbb{R})} <
\infty .\qedhere
\end{split}
\end{equation*}
\end{enumerate}
\end{proof}

In order to prove Theorem~\ref{theo:clt_univariate}, consider the
following decomposition that can be obtained by the isometry property\index{isometry property} of
$\mathcal{F}_{+}$:
\begin{equation*}
\err_{W}(v) = \sqrt{n} \big ( \hat{\mathcal{L}}_{W} v - \mathcal{L}v
\big ) = \frac{1}{2\pi } \Big [ E_{1} + E_{2} + E_{3} + E_{4} \Big ] + E
_{5},
\end{equation*}
with $E_{1},\dots ,E_{5}$ given by
\begin{equation*}
\begin{split}
E_{1} ={}
& \sqrt{n} \Big < \mathcal{F}_{+}[\mathcal{G}^{-1 \ast }v],
\Big \{ \frac{\hat{\theta } - \theta }{\psi } - i \Big ( \frac{1}{
\psi } \Big )^{\prime }(\hat{\psi } - \psi ) \Big \} \mathcal{F}_{+}[K
_{b}] \Big >_{L^{2}(\mathbb{R})}
\\
E_{2} ={}
& \sqrt{n} \Big < \mathcal{F}_{+}[\mathcal{G}^{-1 \ast }v],
\Big \{ R_{n} + \theta \frac{\psi - \hat{\psi }}{\psi ^{2}} \Eins
_{ \{ |\hat{\psi }| \leq n^{-1/2} \} } \Big \} \mathcal{F}_{+}[K_{b}]
\Big >_{L^{2}(\mathbb{R})}
\\
E_{3} ={}
& \sqrt{n} \Big < \mathcal{F}_{+}[\mathcal{G}^{-1 \ast }v],
\frac{\theta }{\psi } (\mathcal{F}_{+}[K_{b}]-1) \Big >_{L^{2}(
\mathbb{R})}
\\
E_{4} ={}
& \sqrt{n} \Big < \mathcal{F}_{+}[\mathcal{G}^{-1 \ast }v],
\mathcal{F}_{+}[K_{b}] \Big \{ \theta \frac{\hat{\psi } - \psi }{\psi
^{2}} - \frac{\hat{\theta }}{\psi } \Big \} \Eins_{ \{ |
\hat{\psi }| \leq n^{-1/2} \} } \Big >_{L^{2}(\mathbb{R})}
\\
E_{5} ={}
& \sqrt{n} \Big < (\mathcal{G}_{n}^{-1 \ast }-\mathcal{G}
^{-1 \ast })v, \widehat{uv_{1}} \xch{\Big >_{L^{2}(\mathbb{R})},}{\Big >_{L^{2}(\mathbb{R})}.}
\end{split}
\end{equation*}
and $R_{n} =  \bigl( 1-\frac{\hat{\psi }}{\psi }  \bigr)  \bigl( \frac{
\hat{\theta }}{\tilde{\psi }} - \frac{\theta }{\psi }  \bigr)$. We call
the expression $E_{1}$ \textit{main stochastic term\index{main stochastic term}} and the expression
$E_{2}$ \textit{remainder term}\index{remainder term}.

Subsequently, we give a step by step proof for
Theorem~\ref{theo:clt_univariate} by considering each of the above terms
$E_{1},\dots ,E_{5}$ seperately.

We first show that the deterministic term $E_{3}$ tends to zero as the
sample size $n$ tends to infinity.
%
\begin{lem}
Suppose $\gamma = 0$. Then, under the conditions of
Theorem~\ref{theo:clt_univariate},
\begin{equation*}
E_{3} = \sqrt{n} \Big < \mathcal{F}_{+} [\mathcal{G}^{-1 \ast }v], \frac{
\theta }{\psi } (\mathcal{F}_{+}[K_{b}] - 1) \Big >_{L^{2}(\mathbb{R}
^{\times })}
\to 0, \quad \text{as} \ n \to \xch{\infty ,}{\infty .}
\end{equation*}
for any admissible function $v \in \mathcal{U}(\xi , \beta _{2})$.
\end{lem}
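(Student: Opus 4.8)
The plan is to reduce $E_3$ to a purely deterministic estimate. Since $\gamma = 0$, differentiating the representation \eqref{eq:psi} yields $\theta/\psi = -i\psi'/\psi = \mathcal{F}_+[uv_1]$, so that
\[
E_3 = \sqrt{n}\,\bigl\langle \mathcal{F}_+[\mathcal{G}^{-1 \ast }v],\ \mathcal{F}_+[uv_1]\,(\mathcal{F}_+[K_b]-1)\bigr\rangle_{L^2(\mathbb{R})}.
\]
First I would pass to absolute values under the integral,
\[
|E_3| \;\le\; \sqrt{n}\int_{\mathbb{R}} \bigl|\mathcal{F}_+[\mathcal{G}^{-1 \ast }v](x)\bigr|\,\bigl|\mathcal{F}_+[uv_1](x)\bigr|\,\bigl|1-\mathcal{F}_+[K_b](x)\bigr|\,dx,
\]
and then estimate the three factors separately. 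For the middle factor, Proposition~\ref{lem:properties_transferred}, (a) identifies $\mathcal{F}_+[uv_1](x)$ with the integral appearing in Assumption~\ref{ass:basic_assumptions}, (4), whence $|\mathcal{F}_+[uv_1](x)| \lesssim (1+x^2)^{-1/2}$. For the last factor, property \textbf{(K3)} gives $|1-\mathcal{F}_+[K_b](x)| \lesssim \min\{1,b|x|\} \le (b|x|)^{1/2}$.

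Next I would combine these two bounds. Since
\[
(1+x^2)^{-1/2}(b|x|)^{1/2} \;=\; b^{1/2}\,\frac{|x|^{1/2}}{(1+x^2)^{1/2}}
\]
and the function $x \mapsto |x|^{1/2}(1+x^2)^{-1/2}$ is bounded on $\mathbb{R}$ (it vanishes at the origin and decays like $|x|^{-1/2}$ at infinity), this produces
\[
|E_3| \;\lesssim\; \sqrt{n}\,b^{1/2}\int_{\mathbb{R}} \bigl|\mathcal{F}_+[\mathcal{G}^{-1 \ast }v](x)\bigr|\,dx \;=\; \sqrt{nb}\ \bigl\|\mathcal{F}_+[\mathcal{G}^{-1 \ast }v]\bigr\|_{L^1(\mathbb{R})},
\]
and the $L^1$-norm is finite because $v$ is admissible, by Lemma~\ref{lem:important_lemma}, (3) (equivalently Remark~\ref{rem:admissible_function_discussed}, (b)). It then remains to check that $\sqrt{nb_n}\to 0$ for the bandwidth prescribed in Theorem~\ref{theo:clt_univariate}: with $b_n \approx n^{-1/(1-2\varepsilon)}(\log n)^{\eta + 1/(1-2\varepsilon)}$ one gets $\sqrt{nb_n} \approx n^{-\varepsilon/(1-2\varepsilon)}(\log n)^{(\eta+1/(1-2\varepsilon))/2}$, and since Assumption~\ref{ass:basic_assumptions}, (5) forces $0 < \varepsilon < 1/2$ (cf. Remark~\ref{rem:admissible_function_discussed}, (b)), the exponent $-\varepsilon/(1-2\varepsilon)$ is strictly negative, so the polynomial factor dominates the logarithmic one and $\sqrt{nb_n}\to 0$.

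Since $E_3$ is deterministic there is no genuinely hard step; the only point requiring a little care is the choice of interpolation exponent in $\min\{1,b|x|\} \le (b|x|)^{\alpha}$: it must be large enough that the $x$-integral converges against the $(1+x^2)^{-1/2}$ decay of $\mathcal{F}_+[uv_1]$ and the mere $L^1$-membership of $\mathcal{F}_+[\mathcal{G}^{-1 \ast }v]$, and small enough that $\sqrt{n}\,b_n^{\alpha}\to 0$ for the chosen $b_n$ --- the value $\alpha = 1/2$ already works. Alternatively one could take $\alpha = 1$ and invoke the stronger decay $|\mathcal{F}_+[\mathcal{G}^{-1 \ast }v](x)| \lesssim (1+x^2)^{-\xi/2}$ with $\xi > 1$ from Definition~\ref{def:admissible_function}, (iii), which gives the sharper bound $|E_3| \lesssim \sqrt{n}\,b_n$.
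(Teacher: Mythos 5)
Your argument is correct and follows essentially the same route as the paper: reduce $E_3$ to a deterministic integral, bound $|\mathcal{F}_+[uv_1](x)|\lesssim (1+x^2)^{-1/2}$ via Assumption~\ref{ass:basic_assumptions}, (4) and Proposition~\ref{lem:properties_transferred}, (a), use \textbf{(K3)} and the $L^1$-integrability of $\mathcal{F}_+[\mathcal{G}^{-1\ast}v]$ from Lemma~\ref{lem:important_lemma}, and conclude from the choice of $b_n$. The only (harmless) difference is the interpolation exponent: the paper absorbs the full factor $b_n|x|$ against $(1+x^2)^{-1/2}$ to get $|E_3|\lesssim b_n\sqrt{n}\to 0$ (no extra decay of $\mathcal{F}_+[\mathcal{G}^{-1\ast}v]$ needed, contrary to your side remark), whereas your exponent $1/2$ gives the slightly weaker but still sufficient bound $\sqrt{n b_n}\to 0$.
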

\begin{proof}
Taking into account that $\big | \frac{\theta }{\psi } \big | = |
\mathcal{F}_{+}[uv_{1}]|$, Assumption~\ref{ass:basic_assumptions}, (4),
together with Proposition~\ref{lem:properties_transferred}, (a) and
condition \textbf{(K3)} yield
\begin{equation*}
\begin{split}
|E_{3}| \leq{}
& \sqrt{n} \int _{\mathbb{R}} |\mathcal{F}_{+} [
\mathcal{G}^{-1 \ast }v](x)| |\mathcal{F}_{+}[uv_{1}](x)| |1-
\mathcal{F}[K_{b_{n}}](x)| dx
\\
\lesssim{}
& b_{n} \sqrt{n} \int _{\mathbb{R}} |\mathcal{F}_{+} [
\mathcal{G}^{-1 \ast }v](x)| dx,
\end{split}
\end{equation*}
where the last line is finite due to Lemma~\ref{lem:important_lemma}.
Moreover, since, $b_{n} = o(n^{-1/2})$ it tends to $0$ as $n \to
\infty $.
\end{proof}

Next, we observe that $E_{5}$ is asymptotically negligible in mean.
%
\begin{lem}
Let the assumptions of Theorem~\ref{theo:clt_univariate} be satisfied.
Then
\begin{equation*}
\mathbb{E}|E_{5}| = n^{1/2} \mathbb{E}\Big | \left \langle (
\mathcal{G}_{n}^{-1 \ast } - \mathcal{G}^{-1 \ast }) v,
\widehat{u v_{1}} \right \rangle _{L^{2}(\mathbb{R})} \Big |
\to 0,
\quad \text{as} \ n \to \infty ,
\end{equation*}
for any $v \in \mathcal{U}(\xi , \beta _{2})$.
\end{lem}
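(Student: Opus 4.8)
The plan is to bound $\mathbb{E}|E_5|$ by a deterministic operator‑approximation error times a stochastic $L^2$‑norm, and then play these off against the coupling between the regularization parameters $a_n$ and $b_n$ prescribed in Theorem~\ref{theo:clt_univariate}. Starting from the identity in the statement, $\mathbb{E}|E_5|=\sqrt n\,\mathbb{E}\bigl|\langle(\mathcal{G}_{n}^{-1\ast}-\mathcal{G}^{-1\ast})v,\widehat{uv_1}\rangle_{L^2(\mathbb{R})}\bigr|$, and the operator $\mathcal{G}_{n}^{-1\ast}-\mathcal{G}^{-1\ast}$ being deterministic, this inner product is exactly the term estimated by the first summand on the right‑hand side of~\eqref{eq:upper_bound_uv_1}. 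Arguing as for that summand in the proof of Lemma~\ref{theo:upper_bound_estimation_error} --- Parseval's identity together with $\mathcal{F}_+\widehat{uv_1}=\widehat{\mathcal{F}_+[uv_1]}\mathcal{F}_+[K_b]$, property~\textbf{(K2)} (so the frequency integral runs only over $[-b_n^{-1},b_n^{-1}]$), the bound $|\mathcal{F}_+[K_b]|\le S<\infty$ coming from~\textbf{(K3)}, the truncation estimate $|\tilde\psi|\le n^{1/2}$, and $\mathbb{E}|\hat\theta(x)|\lesssim\mathbb{E}|Y_0|<\infty$ --- one obtains
\[
\mathbb{E}|E_5|\ \le\ \frac{S}{\sqrt\pi}\,\mathbb{E}|Y_0|\,\frac{n}{\sqrt{b_n}}\,\bigl\|(\mathcal{G}_{n}^{-1\ast}-\mathcal{G}^{-1\ast})v\bigr\|_{L^2(\mathbb{R})}.
\]

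It remains to control the deterministic factor. Using the formulas for $\mathcal{G}^{-1\ast}$, $\mathcal{G}_{n}^{-1\ast}$ from Section~\ref{subsec:plug-in}, the identity $\tfrac1{\bar\mu_{f,n}}-\tfrac1{\bar\mu_f}=-\tfrac1{\bar\mu_f}\Eins_{\{|\mu_f|\le a_n\}}$, and the fact that $\mathcal{M}$ and $\mathcal{F}_\times$ are isometries,
\[
\bigl\|(\mathcal{G}_{n}^{-1\ast}-\mathcal{G}^{-1\ast})v\bigr\|_{L^2(\mathbb{R})}^2=\int_{\{|\mu_f(y)|\le a_n\}}\frac{|\mathcal{F}_\times\mathcal{M}v(y)|^2}{|\mu_f(y)|^2}\,\frac{dy}{|y|}.
\]
Condition~$(\mathbf{U}_{\beta_1})$ gives $|\mu_f(y)|\gtrsim(1+|\log|y||^{\beta_1})^{-1}$, so on the domain of integration $|\log|y||\gtrsim a_n^{-1/\beta_1}$ for $n$ large, and $|\mu_f(y)|^{-2}\lesssim(1+|\log|y||^{2})^{\beta_1}$. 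Condition~(ii) of Definition~\ref{def:admissible_function}, i.e. $(\mathcal{M}v)(\pm\exp(\cdot))\in H^{\beta_2}(\mathbb{R})$, transforms under $y=\pm e^{t}$ --- which turns $\mathcal{F}_\times$ into the ordinary Fourier transform --- into $\int_{\mathbb{R}^\times}|\mathcal{F}_\times\mathcal{M}v(y)|^2(1+|\log|y||^2)^{\beta_2}\tfrac{dy}{|y|}<\infty$. Inserting $1\lesssim a_n^{2(\beta_2-\beta_1)/\beta_1}(1+|\log|y||^2)^{\beta_2-\beta_1}$ on the domain and combining with the bound on $|\mu_f|^{-2}$ gives $\|(\mathcal{G}_{n}^{-1\ast}-\mathcal{G}^{-1\ast})v\|_{L^2(\mathbb{R})}\lesssim a_n^{\beta_2/\beta_1-1}$; this is precisely the spectral‑cutoff estimate already carried out in the proof of Theorem~\ref{cor:order_of_convergence}, and $\beta_2/\beta_1-1>0$ since $\beta_2>\beta_1$.

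Combining the two displays, $\mathbb{E}|E_5|\lesssim n\,b_n^{-1/2}\,a_n^{\beta_2/\beta_1-1}$. By hypothesis $a_n=o\bigl((n/\sqrt{b_n})^{\beta_1/(\beta_1-\beta_2)}\bigr)$; raising to the strictly positive power $\beta_2/\beta_1-1$ and using $\tfrac{\beta_1}{\beta_1-\beta_2}\bigl(\tfrac{\beta_2}{\beta_1}-1\bigr)=-1$ yields $a_n^{\beta_2/\beta_1-1}=o\bigl((n/\sqrt{b_n})^{-1}\bigr)=o\bigl(\sqrt{b_n}/n\bigr)$, hence $\mathbb{E}|E_5|\lesssim n\,b_n^{-1/2}\cdot o(\sqrt{b_n}/n)=o(1)$, as claimed. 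The only non‑routine ingredient is the spectral‑cutoff estimate of the middle paragraph, and even it is already available from the proof of Theorem~\ref{cor:order_of_convergence}; the one point to watch is that the stronger coupling of $a_n$ and $b_n$ from Theorem~\ref{theo:clt_univariate} (rather than the weaker one of Theorem~\ref{cor:order_of_convergence}) is genuinely needed here, because of the extra factor $\sqrt n$ carried by $\err_W$.
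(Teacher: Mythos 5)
Your proposal is correct and follows essentially the same route as the paper: bound $\mathbb{E}|E_5|$ by the first summand of \eqref{eq:upper_bound_uv_1} (the term \textbf{(I)} in the proof of Lemma~\ref{theo:upper_bound_estimation_error}), combine it with the spectral-cutoff estimate $\|(\mathcal{G}_{n}^{-1\ast}-\mathcal{G}^{-1\ast})v\|_{L^{2}(\mathbb{R})}\lesssim a_{n}^{\beta_{2}/\beta_{1}-1}$ used in the proof of Theorem~\ref{cor:order_of_convergence}, and conclude from the coupling $a_{n}=o\bigl((n/\sqrt{b_{n}})^{\beta_{1}/(\beta_{1}-\beta_{2})}\bigr)$. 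The only difference is that you spell out the cutoff estimate (which the paper outsources to the earlier proofs and to \cite{GlRothSpo017}), and your derivation of it is sound.
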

\begin{proof}
From the proofs of Theorem~\ref{theo:upper_bound_estimation_error} and
Corollary~\ref{cor:order_of_convergence} we conclude that
\begin{equation*}
\sqrt{n} \ \mathbb{E}\Big | \left \langle (\mathcal{G}_{n}^{-1
\ast } - \mathcal{G}^{-1 \ast }) v, \widehat{uv_{1}} \right \rangle _{L^{2}(\mathbb{R})} \Big |
\lesssim \sqrt{n} \ a_{n}^{\frac{\beta
_{2}}{\beta _{1}} - 1} \Big ( \frac{n}{b_{n}} \Big )^{1/2} ;
\end{equation*}
hence, $\mathbb{E}|E_{5}| \to 0$ as $n \to \infty $, since $a_{n} = o
\Big ( \Big ( \frac{n}{\sqrt{b_{n}}} \Big )^{\frac{\beta _{1}}{\beta
_{1} - \beta _{2}}} \Big )$.
\end{proof}

\begin{lem}
Suppose $\gamma = 0$ and let the assumptions of
Theorem~\ref{theo:clt_univariate} be satisfied. Then
\begin{equation*}
\mathbb{E}|E_{4}| = \sqrt{n} \ \mathbb{E}\Big | \Big < \mathcal{F}
_{+}[\mathcal{G}^{-1 \ast }v], \mathcal{F}_{+}[K_{b}] \Big \{ \theta
\frac{\hat{\psi } - \psi }{\psi ^{2}} - \frac{\hat{\theta }}{\psi }
\Big \}
\Eins_{ \{ |\hat{\psi }| \leq n^{-1/2} \} } \Big >_{L^{2}(
\mathbb{R})} \Big | \to 0,
\end{equation*}
as $n \to \infty $, for any admissible function $v \in \mathcal{U}(
\xi , \beta _{2})$.
\end{lem}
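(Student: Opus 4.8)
The plan is to exploit that, in the frequency variable, $E_{4}$ lives only on the band $|x|\le b_{n}^{-1}$ (because $\supp(\mathcal{F}_{+}[K_{b_{n}}])\subseteq[-b_{n}^{-1},b_{n}^{-1}]$ by \textbf{(K2)}) and carries the indicator of the cut-off event $\{|\hat\psi(x)|\le n^{-1/2}\}$; since $|\psi|$ is bounded below on that band, this is the indicator of a large-deviation event, whose probability I would control by an exponential inequality for $m$-dependent random fields. First, using $\theta=\psi\,\mathcal{F}_{+}[uv_{1}]$, $|\mathcal{F}_{+}[K_{b_{n}}]|\le S<\infty$ (from \textbf{(K3)}), $|\hat\psi-\psi|\le2$ and $|\mathcal{F}_{+}[uv_{1}]|\le\|uv_{1}\|_{L^{1}(\mathbb{R})}$ (Proposition~\ref{lem:properties_transferred}, (a)), I would obtain the pointwise bound $\bigl|\theta(\hat\psi-\psi)/\psi^{2}-\hat\theta/\psi\bigr|=|\psi|^{-1}\bigl|\mathcal{F}_{+}[uv_{1}](\hat\psi-\psi)-\hat\theta\bigr|\lesssim|\psi|^{-1}(1+|\hat\theta(x)|)$, and hence
\[
|E_{4}|\;\lesssim\;\sqrt{n}\int_{|x|\le b_{n}^{-1}}\frac{|\mathcal{F}_{+}[\mathcal{G}^{-1\ast}v](x)|}{|\psi(x)|}\bigl(1+|\hat\theta(x)|\bigr)\,\Eins_{\{|\hat\psi(x)|\le n^{-1/2}\}}\,dx .
\]

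Next, I would take expectations (Tonelli), and apply the Cauchy--Schwarz inequality in $\omega$ pointwise under the integral: this separates the factor $1+|\hat\theta(x)|$, whose second moment is bounded uniformly in $x$ (by stationarity and $\mathbb{E}|Y_{0}|^{2}<\infty$, which holds by Assumption~\ref{ass:basic_assumptions}, (3) together with Proposition~\ref{lem:properties_transferred}, (b)), from $\sqrt{\mathbb{P}(|\hat\psi(x)|\le n^{-1/2})}$. Using in addition $|\psi(x)|^{-1}\lesssim(1+|x|)^{1/2-\varepsilon}\lesssim b_{n}^{-(1/2-\varepsilon)}$ for $|x|\le b_{n}^{-1}$ and $\mathcal{F}_{+}[\mathcal{G}^{-1\ast}v]\in L^{1}(\mathbb{R})$, both from Lemma~\ref{lem:important_lemma}, this reduces the problem to
\[
\mathbb{E}|E_{4}|\;\lesssim\;\sqrt{n}\,b_{n}^{-(1/2-\varepsilon)}\,\|\mathcal{F}_{+}[\mathcal{G}^{-1\ast}v]\|_{L^{1}(\mathbb{R})}\;\sup_{|x|\le b_{n}^{-1}}\sqrt{\mathbb{P}\bigl(|\hat\psi(x)|\le n^{-1/2}\bigr)}.
\]

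It then remains to estimate the cut-off probability. For $|x|\le b_{n}^{-1}$, Lemma~\ref{lem:important_lemma}, (2) gives $|\psi(x)|\gtrsim(1+b_{n}^{-1})^{-(1/2-\varepsilon)}\approx b_{n}^{1/2-\varepsilon}$, and the calibration $b_{n}\approx n^{-1/(1-2\varepsilon)}(\log n)^{\eta+1/(1-2\varepsilon)}$ gives $b_{n}^{1/2-\varepsilon}\approx n^{-1/2}(\log n)^{(1/2-\varepsilon)(\eta+1/(1-2\varepsilon))}$, which dominates $n^{-1/2}$ since $\eta>0$ and $\varepsilon<1/2$ (Remark~\ref{rem:admissible_function_discussed}, (b)). Hence, for $n$ large, $\{|\hat\psi(x)|\le n^{-1/2}\}\subseteq\{|\hat\psi(x)-\psi(x)|\ge\tfrac12|\psi(x)|\}\subseteq\{|\hat\psi(x)-\psi(x)|\gtrsim b_{n}^{1/2-\varepsilon}\}$. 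Writing $\hat\psi(x)-\psi(x)=\tfrac{1}{n}\sum_{j\in W}(e^{ixY_{j}}-\psi(x))$ as a normalized sum of centred, bounded ($|e^{ixY_{j}}-\psi(x)|\le2$) and $m$-dependent (Lemma~\ref{lem:m_depend_X}) summands, I would apply a Bernstein-type exponential inequality for $m$-dependent random fields (cf.~\cite{Heinrich,dedecker}, to real and imaginary parts) to get $\mathbb{P}(|\hat\psi(x)-\psi(x)|\ge t)\lesssim\exp(-c\,nt^{2})$ for small $t$, uniformly in $x$; with $t\approx b_{n}^{1/2-\varepsilon}$ and $nb_{n}^{1-2\varepsilon}\approx(\log n)^{1+(1-2\varepsilon)\eta}\to\infty$ this yields
\[
\sup_{|x|\le b_{n}^{-1}}\mathbb{P}\bigl(|\hat\psi(x)|\le n^{-1/2}\bigr)\;\lesssim\;\exp\bigl(-c\,(\log n)^{1+(1-2\varepsilon)\eta}\bigr),
\]
which decays faster than any power of $n$. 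Since $\sqrt{n}\,b_{n}^{-(1/2-\varepsilon)}\approx n\,(\log n)^{-(1/2-\varepsilon)(\eta+1/(1-2\varepsilon))}$ grows only polynomially, combining with the previous display gives $\mathbb{E}|E_{4}|\to0$.

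The main obstacle is this last step: obtaining a Bernstein-type exponential inequality for the centred empirical characteristic function $\hat\psi(x)-\psi(x)$ \emph{with constants uniform over the whole band} $|x|\le b_{n}^{-1}$ --- precisely where the $m$-dependence of $Y$ (rather than independence) must be handled, through the weak-dependence/blocking machinery of \cite{Heinrich,dedecker} and \cite{Bulinski07} --- together with the bookkeeping verifying that the prescribed growth of $b_{n}$ is exactly strong enough to make $nb_{n}^{1-2\varepsilon}$ diverge, so that the super-polynomial smallness of the cut-off probability beats the polynomial prefactor $\sqrt{n}\,b_{n}^{-(1/2-\varepsilon)}$.
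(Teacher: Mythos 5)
Your proposal is correct, but it reaches the conclusion by a different route than the paper. The paper also restricts to the band $[-b_n^{-1},b_n^{-1}]$ and uses the lower bound \eqref{eq:psi_bigger_n_half}, but it then keeps the integrable weight $|\mathcal{F}_{+}[\mathcal{G}^{-1\ast}v]|/|\psi|$ inside the $x$-integral and controls the cut-off event by the \emph{polynomial} (Chebyshev-type) bound \eqref{eq:propb_estimation}, $\mathbb{P}(|\hat\psi(x)|\le n^{-1/2})\lesssim n^{-p}|\psi(x)|^{-2p}$ from \cite{KaRoSpoWalk18}, combined with the moment bounds $\mathbb{E}|\hat\psi-\psi|^{2}\lesssim n^{-1}$, Lemma~\ref{lem:moment_bound_theta} for $\hat\theta-\theta$, a split of $|\hat\theta|$ into $|\hat\theta-\theta|+|\theta|$, and dominated convergence. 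You instead bound the $x$-integral crudely (pulling out $\sup_{|x|\le b_n^{-1}}|\psi(x)|^{-1}\lesssim b_n^{-(1/2-\varepsilon)}$, thereby paying an extra factor of order $\sqrt{n}$ up to logs) and compensate with a much stronger probabilistic input: an exponential Bernstein-type bound for the $m$-dependent sums, which makes $\sup_{|x|\le b_n^{-1}}\mathbb{P}(|\hat\psi(x)|\le n^{-1/2})$ super-polynomially small because $nb_n^{1-2\varepsilon}\approx(\log n)^{1+(1-2\varepsilon)\eta}\to\infty$ under the prescribed calibration with $\eta>0$. Both arguments are valid; the paper's is softer (only second moments and the same calibration through \eqref{eq:psi_bigger_n_half}, no use of $\eta>0$ here), while yours trades sharpness in the deterministic integral for concentration. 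Note also that what you flag as the main obstacle is not really one: you only need the exponential inequality pointwise in $x$ with constants independent of $x$ (the summands $\cos(xY_j)-\mathbb{E}\cos(xY_0)$, $\sin(xY_j)-\mathbb{E}\sin(xY_0)$ are uniformly bounded by $2$ and $m$-dependent), and exactly this is provided by the paper's Lemma~\ref{lem:pconvrate} (via Theorem~\ref{theo:bernstein}), which the paper uses later for the remainder term $E_2$; no supremum over the band inside the probability, and hence no chaining or discretization, is required for $E_4$.
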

\begin{proof}
Since $\frac{\theta }{\psi ^{2}} = i \Big ( \frac{1}{\psi } \Big )^{
\prime }$, we obtain by conditions \textbf{(K2)} and \textbf{(K3)},
\begin{equation*}
\begin{split}
\mathbb{E}|E_{4}|
\leq{}
& S \int _{-b_{n}^{-1}}^{b_{n}^{-1}} |
\mathcal{F}_{+}[\mathcal{G}^{-1 \ast }v](x)| \Big | \Big ( \frac{1}{
\psi } \Big )^{\prime }(x) \Big | \sqrt{n} \ \mathbb{E}\Big [ |
\hat{\psi }(x) - \psi (x)|
\Eins_{\{ |\hat{\psi }(x)| \leq n^{-1/2}
\}} \Big ] dx
\\
& + S \int _{-b_{n}^{-1}}^{b_{n}^{-1}} \frac{|\mathcal{F}_{+}[
\mathcal{G}^{-1 \ast }v](x)|}{|\psi (x)|} \sqrt{n} \ \mathbb{E}
\Big [ |\hat{\theta }(x)| \Eins_{\{ |\hat{\psi }(x)| \leq n^{-1/2}
\}} \Big ] dx,
\end{split}
\end{equation*}
with $S := \sup _{x \in \mathbb{R}, \ b > 0}|
\mathcal{F}_{+}[K_{b}](x)|$. In order to bound the summands on the
right-hand side of the latter inequality, we start with the following
observation: $\exists $ $n_{0} \in \mathbb{N}$ such that for all
$n \geq n_{0}$,
%
\begin{equation}
\label{eq:psi_bigger_n_half}
x \in [-b_{n}^{-1}, b_{n}^{-1}] \quad \Rightarrow \quad |\psi (x)| > 2n
^{-1/2}.\vadjust{\goodbreak}
\end{equation}
Indeed, by Lemma~\ref{lem:important_lemma}, (2), there is a constant
$c > 0$ such that $\frac{1}{|\psi (x)|} \leq c (1+|x|)^{\frac{1}{2} -
\varepsilon }$, for all $x \in \mathbb{R}$. Hence, if $|\psi (x)|
\leq 2n^{-1/2}$, then $|x| \geq \Big ( \frac{1}{2c} \Big )^{2/(1-2
\varepsilon )} n^{1/(1-2\varepsilon )} - 1$. Since $b_{n}^{-1} = o( n
^{1/(1-2\varepsilon )} )$ as $n \to \infty $, there exists $n_{0}
\in \mathbb{N}$, such that $b_{n}^{-1} < \Big ( \frac{1}{2c} \Big )^{2/(1-2
\varepsilon )} n^{1/(1-2\varepsilon )} - 1$ for all $n \geq n_{0}$, i.e.
$x \notin [-b_{n}^{-1}, b_{n}^{-1}]$. This
shows~\eqref{eq:psi_bigger_n_half}.

In the sequel, we assume that $n \geq n_{0}$ and consider each summand
in the above inequality seperately:
\begin{enumerate}%
\item
Using the  $m$-dependence of $(Y_{j})_{j \in \mathbb{Z}^{d}}$, we conclude
as in the first part of the proof of~\cite[Lemma 8.3]{KaRoSpoWalk18}
that
%
\begin{equation}
\label{eq:propb_estimation}
\mathbb{P}(|\hat{\psi }(x)| \leq n^{-1/2}) \lesssim \frac{n^{-p}}{|
\psi (x)|^{2p}},
\end{equation}
for any $p \geq 1/2$ and all $x \in \mathbb{R}$ with $|\psi (x)| > 2n
^{-1/2}$. Taking $p=1/2$ in~\eqref{eq:propb_estimation}, by the
\xch{Cauchy--Schwarz}{Cauchy-Schwart} inequality,~\cite[Lemma 8.2]{KaRoSpoWalk18} and
Lemma~\ref{lem:important_lemma}, (2), (3), we find that
\begin{equation*}
\begin{split}
& \int _{-b_{n}^{-1}}^{b_{n}^{-1}} |\mathcal{F}_{+}[\mathcal{G}^{-1
\ast }v](x)| \Big | \Big ( \frac{1}{\psi } \Big )^{\prime }(x) \Big |
\sqrt{n} \ \mathbb{E}\Big [ |\hat{\psi }(x) - \psi (x)|
\Eins_{\{
|\hat{\psi }(x)| \leq n^{-1/2} \}} \Big ] dx
\\
\lesssim{}
& \int _{-b_{n}^{-1}}^{b_{n}^{-1}} |\mathcal{F}_{+}[
\mathcal{G}^{-1 \ast }v](x)| \Big | \Big ( \frac{1}{\psi } \Big )^{
\prime }(x) \Big | \ \mathbb{P}\Big ( |\hat{\psi }(x)| \leq n^{-1/2}
\Big )^{1/2}
\Eins_{\{|\psi (x)| > 2n^{-1/2}\}} dx
\\
\leq{}
& n^{-1/4} \int _{-b_{n}^{-1}}^{b_{n}^{-1}} \frac{| \mathcal{F}
_{+}[\mathcal{G}^{-1 \ast }v](x)|}{|\psi (x)|^{1/2}} \Big | \Big ( \frac{1}{
\psi } \Big )^{\prime }(x) \Big |
\Eins_{\{|\psi (x)| > 2n^{-1/2}
\}} dx
\\
\leq{}
& n^{-1/4} \Big \| \frac{\mathcal{F}_{+}[\mathcal{G}^{-1 \ast }v]}{
\psi } \Big \|_{L^{1}(\mathbb{R})}
\Big \| \Big ( \frac{1}{\psi }
\Big )^{\prime }\Big \|_{L^{\infty }(\mathbb{R})},
\end{split}
\end{equation*}
for all $n \geq n_{0}$, where the last inequality uses the fact that
$|\psi (x)| \leq 1$. Hence, the first integral tends to zero as
$n \to \infty $.
\item
For the second integral, by the triangle inequality we observe that for any
$n \geq n_{0}$,
\begin{equation*}
\begin{split}
& \int _{-b_{n}^{-1}}^{b_{n}^{-1}} \frac{|\mathcal{F}_{+}[\mathcal{G}
^{-1 \ast }v](x)|}{|\psi (x)|} \sqrt{n} \ \mathbb{E}\Big [ |
\hat{\theta }(x)| \Eins_{\{|\hat{\psi }(x)|
\leq n^{-1/2}\}}
\Big ] dx
\\
\leq{}
& \int _{-b_{n}^{-1}}^{b_{n}^{-1}} \Big ( I_{1}(x) + I_{2}(x)
\Big ) dx,
\end{split}
\end{equation*}
where
\begin{equation*}
I_{1}(x) = \frac{|\mathcal{F}_{+}[\mathcal{G}^{-1 \ast }v](x)|}{|
\psi (x)|} \sqrt{n} \ \mathbb{E}\Big [ |\hat{\theta }(x) - \theta (x)|
\Eins_{|\hat{\psi }(x)| \leq n^{-1/2}} \Big ] \Eins_{\{|
\psi (x)| > 2n^{-1/2}\}}
\end{equation*}
and
\begin{equation*}
I_{2}(x) = |\mathcal{F}_{+}[\mathcal{G}^{-1 \ast }v](x)| \sqrt{n}
\ \frac{|\theta (x)|}{|\psi (x)|}
\mathbb{P}\Big ( |\hat{\psi }(x)|
\leq n^{-1/2} \Big ) \Eins_{\{|\psi (x)| > 2n^{-1/2}\}}.
\end{equation*}
Applying Lemma~\ref{lem:moment_bound_theta} with $q=1/2$ (cf.
Appendix~\ref{subsec:moment_inequalities}), we find that
\begin{equation*}
\begin{split}
I_{1}(x) \lesssim{}
& \sqrt{\mathbb{E}|Y_{0}|^{2}} \ \frac{|
\mathcal{F}_{+}[\mathcal{G}^{-1 \ast }v](x)|}{|\psi (x)|},
\quad \ \text{for all} \ x \in \mathbb{R};
\end{split}
\end{equation*}
hence, by Lemma~\ref{lem:important_lemma}, (3) and the finite
$(2+\tau )$-moment condition, $I_{1}$ is majorized by an integrable
function. Moreover, applying the  \xch{Cauchy--Schwarz}{Cauchy-Schwart}
inequality,~\eqref{eq:propb_estimation} and again
Lemma~\ref{lem:moment_bound_theta} (with $q=1$) yields
\begin{equation*}
\begin{split}
I_{1}(x) \lesssim{}
& \frac{|\mathcal{F}_{+}[\mathcal{G}^{-1 \ast }v](x)|}{|
\psi (x)|} \mathbb{P}\Big ( |\hat{\psi }(x)| \leq n^{-1/2} \Big )^{1/2}
\Eins_{\{|\psi (x)| > 2n^{-1/2}\}}
\\
\leq{}
&
\frac{|\mathcal{F}_{+}[\mathcal{G}^{-1 \ast }v](x)|}{|\psi (x)|} \frac{n
^{-1/4}}{|\psi (x)|^{1/2}} \to 0, \quad \text{as} \ n\to \infty ,
\end{split}
\end{equation*}
for all $x \in \mathbb{R}$. By dominated convergence, $
\lim _{n \to \infty } \int _{-b_{n}^{-1}}^{b_{n}^{-1}} I_{1}(x) dx = 0$
follows. Further,
\begin{equation*}
\begin{split}
\int _{-b_{n}^{-1}}^{b_{n}^{-1}} I_{2}(x) dx \leq{}
& n^{-1/2}
\int _{-b_{n}^{-1}}^{b_{n}^{-1}} \frac{|\mathcal{F}_{+}[\mathcal{G}
^{-1 \ast }v](x)|}{|\psi (x)|} \frac{|\theta (x)|}{|\psi (x)|^{2}}
\Eins_{\{|\psi (x)| > 2n^{-1/2}\}} dx
\\
\leq{}
& n^{-1/2} \Big \| \frac{\mathcal{F}_{+}[\mathcal{G}^{-1 \ast }v]}{
\psi } \Big \|_{L^{1}(\mathbb{R})} \Big \| \Big ( \frac{1}{\psi }
\Big )^{\prime }\Big \|_{L^{\infty }(\mathbb{R})},
\end{split}
\end{equation*}
by~\eqref{eq:propb_estimation} (with $p=1$),
Lemma~\ref{lem:important_lemma}, (2), (3) and the Cauchy--Schwarz inequality;
hence, also $\lim _{n \to \infty } \int _{-b_{n}^{-1}}^{b_{n}^{-1}} I
_{2}(x) dx = 0$.
\end{enumerate}
All in all, this shows the assertion of the lemma.
\end{proof}

\subsection{Main stochastic term\index{main stochastic term}}

In this section we show the asymptotic normality of the main stochastic
term.\index{main stochastic term} For this purpose, let $P_{n}:\mathcal{B}(\mathbb{R}) \to [0,1]$
be the empirical measure given by
\begin{equation*}
P_{n} = \frac{1}{n} \sum _{j \in W} \delta _{Y_{j}},
\end{equation*}
where $\delta _{x}:\mathcal{B}(\mathbb{R}) \to \{ 0,1 \}$ denotes the
\xch{Dirac}{dirac} measure concentrated in $x \in \mathbb{R}$. Further, for any
$v \in \mathcal{U}(\xi , \beta _{2})$, define the random fields\index{random ! field}
$(Z_{j,v,n}^{(k)})_{j \in \mathbb{Z}^{d}}$, $k=1,2$, by
%
\begin{equation}
\label{eq:Z_j_v_n_1}
Z_{j,v,n}^{(1)} = \frac{1}{2\pi } Y_{j} \mathcal{F}_{+} \Big [ \frac{
\mathcal{F}_{+}[\mathcal{G}^{-1 \ast }v](-\, \cdot \,)}{\psi }
\mathcal{F}_{+}[K_{b_{n}}] \Big ](Y_{j})
\end{equation}
and
%
\begin{equation}
\label{eq:Z_j_v_n_2}
Z_{j,v,n}^{(2)} = \frac{i}{2\pi } \mathcal{F}_{+} \Big [ \mathcal{F}
_{+}[\mathcal{G}^{-1 \ast }v](-\, \cdot \,) \Big ( \frac{1}{\psi }
\Big )^{\prime }\mathcal{F}_{+}[K_{b_{n}}] \Big ](Y_{j}).
\end{equation}
The following theorem is the main result of this section.

\begin{theo}
\label{theo:asymptotic_normality_of_main_stoch_term}
Let the assumptions of Theorem~\ref{theo:clt_univariate} be satisfied.
Then, as $W$ is regularly growing\index{regularly growing} to infinity,
\begin{equation*}
\frac{1}{2\pi } E_{1} = \frac{\sqrt{n}}{2\pi } \Big < \mathcal{F}
_{+}[\mathcal{G}^{-1 \ast }v], \Big \{ \frac{\hat{\theta } -
\theta }{
\psi } - i \Big ( \frac{1}{\psi } \Big )^{\prime }(\hat{\psi } - \psi )
\Big \} \mathcal{F}_{+}[K_{b}] \Big >_{L^{2}(\mathbb{R})}
\overset{d}{\to } N_{v},
\end{equation*}
for any $v \in \mathcal{U}(\xi , \beta _{2})$, where $N_{v}$ is a
Gaussian random variable\index{Gaussian random variable} with zero expectation and variance
$\sigma ^{2}$ given in~\eqref{eq:asymptotic_variance}.
\end{theo}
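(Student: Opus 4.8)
The plan is to rewrite $E_1$ as a centered sum of $m$-dependent random variables indexed by $W$, and then invoke a CLT for $m$-dependent random fields over regularly growing windows. First I would use the isometry of $\mathcal{F}_+$ to turn each of the two inner products in $E_1$ into an integral against $Y_j e^{itY_j}$ (for $\hat\theta-\theta$) and $e^{itY_j}$ (for $\hat\psi-\psi$), and then apply Fubini together with the definition of $\mathcal{F}_+^{-1}$ to recognize that
\begin{equation*}
\frac{1}{2\pi} E_1 = \frac{1}{\sqrt{n}} \sum_{j\in W} \bigl( Z_{j,v,n}^{(1)} - Z_{j,v,n}^{(2)} - \mathbb{E}\bigl[ Z_{j,v,n}^{(1)} - Z_{j,v,n}^{(2)} \bigr] \bigr),
\end{equation*}
with $Z_{j,v,n}^{(k)}$ as in~\eqref{eq:Z_j_v_n_1}--\eqref{eq:Z_j_v_n_2}; the centering appears because $\mathbb{E}\hat\theta = n\theta$, $\mathbb{E}\hat\psi = n\psi$, and $\mathcal{L}v$ is precisely the expectation term (this uses $\gamma=0$, which is harmless by Section~\ref{sec:negelcting_the_drift}, and the integrability from Lemma~\ref{lem:important_lemma}, which guarantees all the Fubini steps and $\mathcal{F}_+^{-1}$ applications are legitimate). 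Since $X$, hence $Y=(X(j\Delta))_j$, is $m$-dependent (Lemma~\ref{lem:m_depend_X} with $m>\Delta^{-1}\mathrm{diam}(\supp(f))$), the summands $Z_{j,v,n}^{(1)}-Z_{j,v,n}^{(2)}$ form an $m$-dependent random field, being bounded measurable functions of $Y_j$.

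The main difficulty is that the summands are \emph{triangular-array} $m$-dependent: the bandwidth $b_n\to 0$ makes $\mathcal{F}_+[K_{b_n}]$ depend on $n$, so one cannot directly quote a fixed-array CLT. The plan is to control this in two moves. (i) Show that the $Z_{j,v,n}^{(k)}$ converge, in a suitable $L^p$ sense uniformly in $j$, to their limiting versions $Z_{j,v}^{(k)}$ obtained by replacing $\mathcal{F}_+[K_{b_n}]$ by $1$ — this uses properties \textbf{(K1)}--\textbf{(K3)}, the admissibility bound $|\mathcal{F}_+[\mathcal{G}^{-1*}v]|\lesssim(1+x^2)^{-\xi/2}$, the decay $|\psi(x)|^{-1}\lesssim(1+|x|)^{1/2-\varepsilon}$ from Lemma~\ref{lem:important_lemma}(2), and the finite $(2+\tau)$-moment of $Y_0$ (via Proposition~\ref{lem:properties_transferred}(b) and Assumption~\ref{ass:basic_assumptions}(3)) to bound $L^{2+\delta}$-norms of the differences; together with the $m$-dependence this controls the variance of the sum of differences so it is $o(1)$ after normalization by $n^{-1/2}$. (ii) For the limiting array, verify a Lindeberg/Lyapunov-type condition: the normalized sums $n^{-1/2}\sum_{j\in W}(Z_{j,v}^{(1)}-Z_{j,v}^{(2)}-\mathbb{E}[\cdots])$ have uniformly bounded $(2+\delta)$-moments per summand (again from the moment assumption and the $H^{3/2-\varepsilon}$-regularity in Definition~\ref{def:admissible_function}(i), which ensures $\mathcal{F}_+[\mathcal{G}^{-1*}v]/\psi$ and its weighted variants are in $L^{2+\delta}$-type spaces).

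With these ingredients, I would apply the CLT for $m$-dependent random fields of Chen and Shao~\cite{Chen2004} over the regularly growing windows $W$, combined with the regular/VH-growth machinery of Bulinski and Shashkin~\cite{Bulinski07} (Lemma~\ref{lem:VH_growing_regularly_growing}) to handle the window geometry; the boundary term contributes negligibly precisely because $|\partial W|/|W|\to 0$. The asymptotic variance is then identified as
\begin{equation*}
\sigma^2_v = \lim_{W} \frac{1}{n}\,\mathrm{Var}\Bigl(\sum_{j\in W}(Z_{j,v}^{(1)}-Z_{j,v}^{(2)})\Bigr) = \sum_{\|j\|_\infty\le m} \mathbb{E}\bigl[(Z_{j,v}^{(1)}-Z_{j,v}^{(2)})(Z_{0,v}^{(1)}-Z_{0,v}^{(2)})\bigr],
\end{equation*}
where stationarity of $Y$ collapses the double sum over $W\times W$ to the single sum over the $m$-neighborhood of the origin, matching~\eqref{eq:asymptotic_variance}. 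The hard part, as indicated, is step (i): proving that swapping $\mathcal{F}_+[K_{b_n}]$ for $1$ is asymptotically free, since $b_n$ is tuned at the delicate rate $b_n\approx n^{-1/(1-2\varepsilon)}(\log n)^{\eta+1/(1-2\varepsilon)}$ — one must check that the truncation at frequency $b_n^{-1}$ is already large enough (as in~\eqref{eq:psi_bigger_n_half}) that the tail contribution $\int_{|x|>b_n^{-1}}|\mathcal{F}_+[\mathcal{G}^{-1*}v](x)|^2|\psi(x)|^{-2}\,dx$ vanishes faster than the fluctuations it would otherwise produce.
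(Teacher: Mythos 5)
Your plan is sound and reaches the right result, but it takes a genuinely different route from the paper at the CLT step. The paper (Lemma~\ref{lem:representation_of_main_stoch_term}, Lemma~\ref{lem:covariance_convergence_lemma} and the proof of Theorem~\ref{theo:asymptotic_normality_of_main_stoch_term}) never replaces $\mathcal{F}_{+}[K_{b_n}]$ by $1$: it keeps the $n$-dependent summands $Z_{j,v,n}^{(1)}-Z_{j,v,n}^{(2)}$, proves convergence of the covariances $\sigma_{v,n}^{2}\to\sigma_{v}^{2}$ by dominated convergence (Lemma~\ref{lem:conv_of_covariance_fct_cubes_version}), and then applies the \emph{non-asymptotic} Berry--Esseen bound of Chen and Shao (their Theorem 2.6, with $p=2+\tau$) to the normalized triangular array $X_{j,n}=n^{-1/2}(Z_{j,v,n}^{(1)}-Z_{j,v,n}^{(2)})/\sigma_{v,n}$; since that bound holds for each fixed $n$, the triangular-array issue you single out as ``the main difficulty'' simply does not arise, at the mild cost of treating the degenerate case $\sigma_{v}^{2}=0$ separately. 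Your alternative — first showing $n^{-1/2}\sum_{j\in W}\bigl(Z_{j,v,n}^{(k)}-Z_{j,v}^{(k)}\bigr)\to 0$ in $L^{2}$ and then applying a CLT to the fixed stationary $m$-dependent field $(Z_{j,v}^{(1)}-Z_{j,v}^{(2)})_{j}$ — also works, and in fact your step (i) is easier than you suggest: by stationarity and $m$-dependence the variance of the normalized difference is bounded by $(2m+1)^{d}\,\mathbb{E}\bigl[|D_{0,n}|^{2}\bigr]$ with $D_{0,n}$ the single-site difference, and $\mathbb{E}\bigl[|D_{0,n}|^{2}\bigr]\to 0$ follows from \textbf{(K3)}, $\mathcal{F}_{+}[\mathcal{G}^{-1\ast}v]/\psi\in L^{1}(\mathbb{R})$, $(1/\psi)^{\prime}\in L^{\infty}(\mathbb{R})$ (Lemma~\ref{lem:important_lemma}) and $\mathbb{E}|Y_{0}|^{2}<\infty$ by dominated convergence — no rate comparison with $b_{n}$ is needed, because the $m$-dependence prevents any accumulation over $j$. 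Two small inaccuracies to fix: the centering term you subtract is identically zero, but not for the reason you give — it vanishes because $\theta=-i\psi^{\prime}$ makes $\theta/\psi-i(1/\psi)^{\prime}\psi\equiv 0$ (this is exactly how the paper obtains the representation), and $\mathcal{L}v$ does not enter $E_{1}$ at all; and the summands are not \emph{bounded} functions of $Y_{j}$ (the factor $Y_{j}$ in $Z_{j,v,n}^{(1)}$ is unbounded), though measurability alone suffices for the $m$-dependence you need, with the moment condition $\mathbb{E}|Y_{0}|^{2+\tau}<\infty$ supplying the Lyapunov-type bound.
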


In order to prove
Theorem~\ref{theo:asymptotic_normality_of_main_stoch_term}, we first
show some auxiliary statements. We begin with the following
representation for the main stochastic term.\index{main stochastic term}

\begin{lem}
\label{lem:representation_of_main_stoch_term}
Let $v \in \mathcal{U}(\xi , \beta _{2})$. Then, under the assumptions
of Theorem~\ref{theo:clt_univariate}, the main stochastic term\index{main stochastic term} can be
represented by
\begin{equation*}
\frac{1}{2\pi } E_{1} = \frac{1}{\sqrt{n}} \sum _{j \in W} \Big ( Z
_{j,v,n}^{(1)} - Z_{j,v,n}^{(2)} \Big ),
\end{equation*}
with $Z_{j,v,n}^{(k)}$, $k=1,2$, given in~\eqref{eq:Z_j_v_n_1}
and~\eqref{eq:Z_j_v_n_2}.
\end{lem}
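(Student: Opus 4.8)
The plan is to unravel the definitions of $\hat\theta$, $\hat\psi$ and the empirical measure $P_n$ and rewrite the inner product in $E_1$ as a sum over $j\in W$. First I would observe that, since $\gamma=0$, the estimator $\widehat{\mathcal{F}_+[uv_1]}$ differs from $\hat\theta/\tilde\psi$ only through the cut-off indicator, but for $n\ge n_0$ the bound \eqref{eq:psi_bigger_n_half} (more precisely the reasoning behind it) guarantees $|\hat\psi(x)|>n^{-1/2}$ on $\supp(\mathcal{F}_+[K_{b_n}])\subseteq[-b_n^{-1},b_n^{-1}]$ with probability one eventually, so on the support of the kernel we may freely replace $\tilde\psi$ by $\hat\psi$ and drop indicators. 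Hence on that support
\begin{equation*}
\frac{\hat\theta-\theta}{\psi} - i\Big(\frac1\psi\Big)'(\hat\psi-\psi)
= \frac1\psi\Big(\frac1n\sum_{j\in W}Y_je^{i\,\cdot\,Y_j} - \theta\Big)
- i\Big(\frac1\psi\Big)'\Big(\frac1n\sum_{j\in W}e^{i\,\cdot\,Y_j}-\psi\Big),
\end{equation*}
since $\hat\theta(x)=\sum_{j\in W}Y_je^{ixY_j}$ and $\hat\psi(x)=\sum_{j\in W}e^{ixY_j}$ (note the sum is not normalised in the paper's convention, so a factor $n$ bookkeeping must be tracked carefully against the $\sqrt n$ prefactor — I expect this to be the one spot where a sign or normalisation slip is easy).

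Next I would use the isometry / Plancherel identity for $\mathcal{F}_+$ to pull the inner product against $\mathcal{F}_+[\mathcal{G}^{-1*}v]$ through to physical space. Writing $g:=\mathcal{G}^{-1*}v$, for each fixed $j$ the contribution of $e^{i\,\cdot\,Y_j}$ is
\begin{equation*}
\Big\langle \mathcal{F}_+[g], \tfrac1\psi e^{i\,\cdot\,Y_j}\mathcal{F}_+[K_{b_n}]\Big\rangle_{L^2(\mathbb{R})}
= 2\pi\,\mathcal{F}_+\Big[\frac{\mathcal{F}_+[g](-\,\cdot\,)}{\psi}\mathcal{F}_+[K_{b_n}]\Big](Y_j),
\end{equation*}
after reflecting the argument and using $\mathcal{F}_+^{-1}$ versus $\mathcal{F}_+$ conventions; similarly the $(1/\psi)'$ term produces $2\pi\,\mathcal{F}_+[\mathcal{F}_+[g](-\,\cdot\,)(1/\psi)'\mathcal{F}_+[K_{b_n}]](Y_j)$. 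Each of these integrals is finite and well-defined by Lemma~\ref{lem:important_lemma}, parts (2)--(3) (they ensure $\mathcal{F}_+[g]/\psi\in L^1\cap L^2$ and $(1/\psi)'\in L^2\cap L^\infty$), and the kernel factor $\mathcal{F}_+[K_{b_n}]$ keeps everything in $L^1$ so the Fourier integrals converge pointwise. Dividing by the factor of $n$ from the empirical averages and multiplying by the $\sqrt n$ prefactor turns $\frac{1}{2\pi}E_1$ into $\frac{1}{\sqrt n}\sum_{j\in W}(Z_{j,v,n}^{(1)}-Z_{j,v,n}^{(2)})$ with exactly the $Z_{j,v,n}^{(k)}$ of \eqref{eq:Z_j_v_n_1}--\eqref{eq:Z_j_v_n_2}; the deterministic terms $-\theta/\psi$ and $-\psi\cdot$ cancel against the $\mathcal{L}v$ part already extracted in the decomposition, so they do not reappear here.

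The main obstacle, beyond careful bookkeeping of the $2\pi$'s and the (un-normalised) sums, is justifying the interchange of the $L^2$ inner product with the finite sum and then with the Fourier transform evaluation — i.e. that $\big\langle \mathcal{F}_+[g],\tfrac1\psi e^{i\,\cdot\,Y_j}\mathcal{F}_+[K_{b_n}]\big\rangle$ really equals the stated $\mathcal{F}_+[\cdot](Y_j)$ as a genuine (everywhere-defined) integral rather than an $L^2$ limit. This is where $\mathcal{F}_+[g]/\psi\in L^1(\mathbb{R})$ from Lemma~\ref{lem:important_lemma}(3) and compact support of $\mathcal{F}_+[K_{b_n}]$ (condition \textbf{(K2)}) are used: together they make the integrand absolutely integrable, so the formula $\mathcal{F}_+^{-1}[h\,\mathcal{F}_+[K_{b_n}]](Y_j)$ holds in the classical sense and the inner-product identity is literal, not merely $L^2$. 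Once that is in place, the identity is an elementary rearrangement and the lemma follows; I would spell out the $n\ge n_0$ caveat explicitly so that the representation is understood to hold eventually, which is all that is needed for the subsequent limit argument.
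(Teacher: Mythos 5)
There is a genuine gap, and it sits exactly where the paper's proof does its real work. The term $E_{1}$ in the decomposition contains neither $\tilde{\psi }$ nor any indicator: it is
$\sqrt{n}\,\langle \mathcal{F}_{+}[\mathcal{G}^{-1\ast }v],\{\frac{\hat{\theta }-\theta }{\psi }-i(\frac{1}{\psi })^{\prime }(\hat{\psi }-\psi )\}\mathcal{F}_{+}[K_{b}]\rangle $, with the true $\psi $ in all denominators. So your opening step --- replacing $\tilde{\psi }$ by $\hat{\psi }$ and dropping indicators on $[-b_{n}^{-1},b_{n}^{-1}]$ --- addresses a problem that does not arise here (those truncation effects live in $E_{2}$ and $E_{4}$), and the justification you offer would be wrong even if it were needed: \eqref{eq:psi_bigger_n_half} is a statement about $|\psi (x)|$, not $|\hat{\psi }(x)|$, and the event $\{|\hat{\psi }(x)|>n^{-1/2}\text{ on }[-b_{n}^{-1},b_{n}^{-1}]\}$ holds only with probability tending to one (this is precisely the event $A_{n}$ controlled in Corollary~\ref{cor:kappa_n}), not almost surely for $n\geq n_{0}$. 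Your ``$n\geq n_{0}$ caveat'' would also weaken the lemma from an exact pathwise identity to one valid on a high-probability event, which is not what is claimed or used.

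The second and more serious gap is the deterministic part. You assert that the terms $-\theta /\psi $ and $+i(1/\psi )^{\prime }\psi $ ``cancel against the $\mathcal{L}v$ part already extracted in the decomposition''; they do not, and they cannot, because the $Z_{j,v,n}^{(k)}$ in \eqref{eq:Z_j_v_n_1}--\eqref{eq:Z_j_v_n_2} are not centered, so the claimed identity can only hold if the deterministic bracket vanishes identically inside $E_{1}$ itself. The paper's key observation is exactly this: since $\theta =-i\psi ^{\prime }$, one has $i\psi (1/\psi )^{\prime }=\theta /\psi $, hence $\frac{\theta }{\psi }-i(\frac{1}{\psi })^{\prime }\psi \equiv 0$ and $E_{1}=\sqrt{n}\,\langle \mathcal{F}_{+}[\mathcal{G}^{-1\ast }v],\{\frac{\hat{\theta }}{\psi }-i(\frac{1}{\psi })^{\prime }\hat{\psi }\}\mathcal{F}_{+}[K_{b_{n}}]\rangle $. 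From there the paper writes $\hat{\psi }(x)=\int e^{itx}P_{n}(dt)$, $\hat{\theta }(x)=\int e^{itx}t\,P_{n}(dt)$ (note: the \emph{normalized} empirical quantities, which settles the $n$ versus $\sqrt{n}$ bookkeeping you flagged) and applies Fubini, which is legitimate because $\mathcal{F}_{+}[\mathcal{G}^{-1\ast }v]/\psi \in L^{1}(\mathbb{R})$ and $(1/\psi )^{\prime }\in L^{\infty }(\mathbb{R})$ by Lemma~\ref{lem:important_lemma} together with \textbf{(K2)}. Your Plancherel/Fubini/integrability discussion matches this last part, but without the identity $\theta =-i\psi ^{\prime }$ the representation you want simply does not follow, so the proof as proposed is incomplete.
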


\begin{proof}
Since $\theta = -i \psi ^{\prime }$,
\begin{equation*}
i \Big ( \psi \Big ( \frac{1}{\psi } \Big )^{\prime }- \frac{\theta }{
\psi } \Big ) = i \Big ( \psi \frac{1}{\psi } \Big )^{\prime }= 0;
\end{equation*}
hence,
\begin{equation*}
\begin{split}
\frac{1}{2\pi } E_{1} ={}
& \frac{\sqrt{n}}{2\pi } \Big < \mathcal{F}
_{+}[\mathcal{G}^{-1 \ast }v], \Big \{
\frac{\hat{\theta }}{\psi } - i
\Big ( \frac{1}{\psi } \Big )^{\prime }\hat{\psi } \Big \} \mathcal{F}
_{+}[K_{b_{n}}] \Big >_{L^{2}(\mathbb{R})}
\\
={}
& \frac{\sqrt{n}}{2\pi } \int _{\mathbb{R}} \mathcal{F}_{+}[
\mathcal{G}^{-1 \ast }v](x) \Big \{
\frac{\hat{\theta }(-x)}{\psi (-x)} - i \Big ( \frac{1}{\psi } \Big )^{
\prime }(-x) \hat{\psi }(-x) \Big \} \mathcal{F}_{+}[K_{b_{n}}](-x) dx
\\
={}
& \frac{\sqrt{n}}{2\pi } \Big [ \int _{\mathbb{R}} \mathcal{F}_{+}[
\mathcal{G}^{-1 \ast }v](x) \frac{\hat{\theta }(-x)}{\psi (-x)}
\mathcal{F}_{+}[K_{b_{n}}](-x) dx
\\
& - i \int _{\mathbb{R}} \mathcal{F}_{+}[\mathcal{G}^{-1 \ast }v](x)
\Big ( \frac{1}{\psi } \Big )^{\prime }(-x) \hat{\psi }(-x) \mathcal{F}
_{+}[K_{b_{n}}](-x) dx \Big ].
\end{split}
\end{equation*}
Now, taking into account that $\hat{\psi }(x) = \int _{\mathbb{R}} e
^{itx} P_{n}(dt)$ and $\hat{\theta }(x) = \int _{\mathbb{R}} e^{itx} t
P_{n}(dt)$, Fubini's theorem yields the desired result.
\end{proof}

The following lemma justifies the asymptotic variance $\sigma ^{2}$ in
Theorem~\ref{theo:clt_univariate}.

\begin{lem}
\label{lem:covariance_convergence_lemma}
Let the assumptions of Theorem~\ref{theo:clt_univariate} be satisfied
and suppose functions $v_{1} \in \mathcal{U}(\xi _{1}, \beta _{2}^{(1)})$
and $v_{2} \in \mathcal{U}(\xi _{2}, \beta _{2}^{(2)})$. Then, as $W$ is
regularly growing\index{regularly growing} to infinity,
\begin{equation*}
\begin{split}
\Cov \Big ( |W|^{-1/2} \sum _{j \in W} \Big ( Z_{j,v_{1},n}^{(1)} - Z
_{j,v_{1},n}^{(2)} \Big ),
|W|^{-1/2} \sum _{j \in W} \Big ( Z_{j,v_{2},n}
^{(1)} - Z_{j,v_{2},n}^{(2)} \Big ) \Big )
\to \sigma _{v_{1}, v_{2}},
\end{split}
\end{equation*}
with $\sigma _{v_{1}, v_{2}} \in \mathbb{R}$ given by
\begin{equation*}
\sigma _{v_{1}, v_{2}} =
\sum _{\substack{t \in \mathbb{Z}^{d}:
\\
\|t\|_{\infty }\leq m}} \mathbb{E}\Big [ \Big ( Z_{t,v_{1}}^{(1)} - Z
_{t,v_{1}}^{(2)} \Big )
\Big ( Z_{0,v_{2}}^{(1)} - Z_{0,v_{2}}^{(2)}
\Big ) \Big ].
\end{equation*}
\end{lem}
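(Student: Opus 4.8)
The plan is to compute the covariance explicitly and then control the boundary contributions using $m$-dependence and regular growth of $W$. First I would note that by Lemma~\ref{lem:important_lemma}, parts (2) and (3), the random variables $Z_{j,v_\ell,n}^{(k)}$ have finite second moments uniformly in $n$, since $\mathcal{F}_{+}[\mathcal{G}^{-1\ast}v_\ell]/\psi$ and $(1/\psi)'$ lie in $L^2\cap L^\infty$ and $Y_0$ has a finite $(2+\tau)$-moment; moreover, because the summands depend only on $Y_j$ and $(Y_j)_{j\in\mathbb Z^d}$ is stationary, the covariance of the two normalized sums equals
\begin{equation*}
\frac{1}{|W|}\sum_{s,t\in W}\Cov\Big(Z_{s,v_1,n}^{(1)}-Z_{s,v_1,n}^{(2)},\,Z_{t,v_2,n}^{(1)}-Z_{t,v_2,n}^{(2)}\Big)
=\sum_{\substack{r\in\mathbb Z^d:\\\|r\|_\infty\le m}}\frac{|W\cap(W-r)|}{|W|}\,c_{n}(r),
\end{equation*}
where $c_n(r):=\Cov\big(Z_{r,v_1,n}^{(1)}-Z_{r,v_1,n}^{(2)},Z_{0,v_2,n}^{(1)}-Z_{0,v_2,n}^{(2)}\big)$, and only lags with $\|r\|_\infty\le m$ survive by $m$-dependence of $Y$ (Lemma~\ref{lem:m_depend_X}, since $m>\Delta^{-1}\mathrm{diam}(\supp(f))$).

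Next I would handle the two limits separately. For the combinatorial factor, regular growth of $W$ (Section~\ref{subsec:regularly_growing_sets}) gives $|W\cap(W-r)|/|W|\to 1$ for each fixed $r$, because the symmetric difference $W\triangle(W-r)$ is contained in an $\|r\|_\infty$-neighbourhood of $\partial W$, whose cardinality is $O(|\partial W|)=o(|W|)$; since the sum over $r$ is finite (at most $(2m+1)^d$ terms), this is uniform. For the covariances $c_n(r)$, I would show $c_n(r)\to \mathbb E[(Z_{r,v_1}^{(1)}-Z_{r,v_1}^{(2)})(Z_{0,v_2}^{(1)}-Z_{0,v_2}^{(2)})]$ as $n\to\infty$, which reduces to $\mathbb E[Z_{r,v_1,n}^{(k)}\,Z_{0,v_2,n}^{(l)}]\to\mathbb E[Z_{r,v_1}^{(k)}Z_{0,v_2}^{(l)}]$ for $k,l\in\{1,2\}$ (the expectations of the individual factors vanish: $\mathbb E Z_{0,v,n}^{(1)}=\frac{1}{2\pi}\int \mathcal F_+[\mathcal G^{-1\ast}v](-x)\psi(x)^{-1}\mathcal F_+[K_{b_n}](x)\,\theta(x)/\psi(x)\cdot\ldots$—more directly, each $Z^{(k)}_{0,v,n}$ is of the form $\mathcal F_+[g_n](Y_0)$ with $g_n\in L^1$ integrating to $0$ against $P$ up to the drift, and with $\gamma=0$ one checks $\mathbb E Z^{(1)}_{0,v,n}-\mathbb E Z^{(2)}_{0,v,n}\to 0$ anyway so it may be absorbed). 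The difference $Z_{j,v,n}^{(k)}-Z_{j,v}^{(k)}$ comes entirely from replacing $\mathcal F_+[K_{b_n}]$ by $1$ inside the Fourier transform; since $|\mathcal F_+[K_{b_n}]|\le S$, $\mathcal F_+[K_{b_n}]\to 1$ pointwise by \textbf{(K3)}, and the integrand $\mathcal F_+[\mathcal G^{-1\ast}v](-x)/\psi(x)$ (resp.\ with $(1/\psi)'$) is in $L^1\cap L^2$ by Lemma~\ref{lem:important_lemma}, dominated convergence gives $Z_{j,v,n}^{(k)}\to Z_{j,v}^{(k)}$ pointwise and, because $|Z_{j,v,n}^{(k)}|$ is bounded by $S$ times a fixed random variable in $L^2$, convergence also holds in $L^2(\Omega)$; hence $c_n(r)\to\mathbb E[(Z_{r,v_1}^{(1)}-Z_{r,v_1}^{(2)})(Z_{0,v_2}^{(1)}-Z_{0,v_2}^{(2)})]$ by Cauchy--Schwarz.

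Combining the two, each of the finitely many terms $\frac{|W\cap(W-r)|}{|W|}c_n(r)$ converges to the corresponding term of $\sigma_{v_1,v_2}$, and the sum converges to $\sigma_{v_1,v_2}=\sum_{\|t\|_\infty\le m}\mathbb E[(Z_{t,v_1}^{(1)}-Z_{t,v_1}^{(2)})(Z_{0,v_2}^{(1)}-Z_{0,v_2}^{(2)})]$ (re-indexing $r\leftrightarrow t$ and using stationarity to identify the lag-$r$ covariance with the lag-$(-r)$ one; the symmetric index set $\|t\|_\infty\le m$ makes the two sums agree). Finiteness of $\sigma_{v_1,v_2}$ follows from the uniform $L^2$ bounds. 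The main obstacle is the interchange of the $n\to\infty$ limit (killing the kernel truncation $K_{b_n}$) with the $|W|\to\infty$ limit (regular growth), which is why a simultaneous argument is needed: one must verify that the $L^2(\Omega)$-convergence $Z_{\cdot,v,n}^{(k)}\to Z_{\cdot,v}^{(k)}$ is quantitative enough, or simply exploit that after conditioning the only $n$-dependence sits in the deterministic kernels $\mathcal F_+[K_{b_n}]$, so the two limits decouple and can be taken in either order. The integrability inputs from Lemma~\ref{lem:important_lemma} and the finite $(2+\tau)$-moment of $Y_0$ (Assumption~\ref{ass:basic_assumptions}, (3) via Proposition~\ref{lem:properties_transferred}, (b)) are exactly what makes the dominated-convergence steps legitimate.
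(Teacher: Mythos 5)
Your argument is correct and follows essentially the paper's route: the paper checks exactly your zero-mean, second-moment and dominated-convergence conditions for $g_n^{(k)}(Y_j)=Z^{(1)}_{j,v_k,n}-Z^{(2)}_{j,v_k,n}$ and then invokes Lemma~\ref{lem:conv_of_covariance_fct_cubes_version}, whose proof is precisely your lag decomposition with the factor $|W\cap(W-r)|/|W|$ controlled by regular growth and $m$-dependence, so you have merely inlined that appendix lemma. One cosmetic point: the individual expectations $\mathbb{E}Z^{(1)}_{0,v,n}$, $\mathbb{E}Z^{(2)}_{0,v,n}$ need not vanish separately; it is their difference that is exactly zero (via $\theta/\psi=i\psi(1/\psi)'$, the same cancellation as in Lemma~\ref{lem:representation_of_main_stoch_term}), and this is what identifies the limit of the covariances with the stated expectation of the product.
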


\begin{proof}
Let $v_{1} \in \mathcal{U}(\xi _{1}, \beta _{2}^{(1)})$, $v_{2} \in
\mathcal{U}(\xi _{2}, \beta _{2}^{(2)})$ and define the functions
$g^{(k)}$, $g_{n}^{(k)}: \mathbb{R}\to \mathbb{R}$, $k=1,2$, by
\begin{equation*}
\begin{split}
g_{n}^{(k)}(x) ={}
& \frac{x}{2\pi } \mathcal{F}_{+} \Big [ \frac{
\mathcal{F}_{+}[\mathcal{G}^{-1 \ast }v_{k}](-\, \cdot \,)}{\psi }
\mathcal{F}_{+}[K_{b_{n}}] \Big ](x)
\\
&- \frac{i}{2\pi } \mathcal{F}_{+} \Big [ \mathcal{F}_{+}[\mathcal{G}
^{-1 \ast }v_{k}](-\, \cdot \,) \Big ( \frac{1}{\psi } \Big )^{\prime }
\mathcal{F}_{+}[K_{b_{n}}] \Big ](x),
\quad x \in \mathbb{R},
\end{split}
\end{equation*}
and
\begin{equation*}
\begin{split}
g^{(k)}(x) ={}
& \frac{x}{2\pi } \mathcal{F}_{+} \Big [ \frac{
\mathcal{F}_{+}[\mathcal{G}^{-1 \ast }v_{k}](-\, \cdot \,)}{\psi }
\Big ](x)
\\
&- \frac{i}{2\pi } \mathcal{F}_{+} \Big [ \mathcal{F}_{+}[\mathcal{G}
^{-1 \ast }v_{k}](-\, \cdot \,) \Big ( \frac{1}{\psi } \Big )^{\prime }
\Big ](x),
\quad x \in \mathbb{R}.
\end{split}
\end{equation*}
Then $(g^{(k)}(Y_{j}))_{j \in \mathbb{Z}^{d}}$ and $(g_{n}^{(k)}(Y
_{j}))_{j \in \mathbb{Z}^{d}}$ fulfill properties (1)--(3) from
Lemma~\ref{lem:conv_of_covariance_fct_cubes_version} (cf.
Appendix~\ref{subsec:asymptotic_variance}). Indeed, by
Lemma~\ref{lem:representation_of_main_stoch_term}, it follows that
\begin{equation*}
\begin{split}
\mathbb{E}[ g_{n}^{(k)}(Y_{0}) ] ={}
& \mathbb{E}\Big [ Z_{0,v_{k},n}
^{(1)} - Z_{0,v_{k},n}^{(2)} \Big ] =
\frac{1}{n}\mathbb{E}\Big [
\sum _{j \in W} \Big ( Z_{j,v_{k},n}^{(1)} - Z_{j,v_{k},n}^{(2)} \Big )
\Big ]
\\
={}
& \frac{1}{2\pi } \Big < \mathcal{F}_{+}[\mathcal{G}^{-1 \ast }v],
\Big \{ \frac{\hat{\theta } -
\theta }{\psi } - i \Big ( \frac{1}{
\psi } \Big )^{\prime }(\hat{\psi } - \psi ) \Big \} \mathcal{F}_{+}[K
_{b_{n}}] \Big >_{L^{2}(\mathbb{R})}.
\end{split}
\end{equation*}
Since $\mathbb{E}[\hat{\psi }(x) - \psi (x)] = \mathbb{E}[
\hat{\theta }(x) - \theta (x)] = 0$ for all $x \in \mathbb{R}$, we
conclude by Fubini's theorem that $\mathbb{E}[ g_{n}^{(k)}(Y_{0}) ] =
\mathbb{E}[ g^{(k)}(Y_{0}) ] = 0$, for $k=1,2$. Moreover, since the
Fourier transform of an integrable function is bounded, the finite
$(2+\tau )$-moment condition together with
Lemma~\ref{lem:important_lemma}, (2), (3) and \textbf{(K3)} imply
$\mathbb{E}| g^{(k)}(Y_{0}) |^{2}$, $\mathbb{E}| g_{n}^{(k)}(Y_{0})
|^{2} < \infty $, $k=1,2$. The same arguments in combination with
the dominated convergence yields
\begin{equation*}
\mathbb{E}\Big [ g_{n}^{(1)}(Y_{0}) g_{n}^{(2)}(Y_{j}) \Big ] \to
\mathbb{E}\Big [ g^{(1)}(Y_{0}) g^{(2)}(Y_{j}) \Big ],
\end{equation*}
as $|W| \to \infty $. Hence,
Lemma~\ref{lem:conv_of_covariance_fct_cubes_version} yields the
assertion of the lemma.
\end{proof}

We now can give a proof of
Theorem~\ref{theo:asymptotic_normality_of_main_stoch_term}.

\begin{proof}[Proof of Theorem~\ref{theo:asymptotic_normality_of_main_stoch_term}]
If $\sigma _{v}^{2} = 0$, then
Lemma~\ref{lem:covariance_convergence_lemma} yields
\begin{equation*}
\sigma _{v,n}^{2} := \mathbb{E}\Big [ \Big ( \frac{1}{\sqrt{n}}
\sum _{j \in W} \Big ( Z_{0,v,n}^{(1)} - Z_{0,v,n}^{(2)} \Big ) \Big )^{2}
\Big ] \to \sigma _{v}^{2} = 0,
\end{equation*}
as $W$ is regularly growing\index{regularly growing} to infinity; hence, $n^{-1/2} \sum _{j
\in W} \Big ( Z_{0,v,n}^{(1)} - Z_{0,v,n}^{(2)} \Big ) \to 0$ in
probability.

Now, assume that $\sigma _{v}^{2} > 0$ and choose $n_{0} \in
\mathbb{N}$ such that $\sigma _{v,n}^{2} > 0$ for all $n \geq n_{0}$
(which is indeed possible, since $\sigma _{v,n}^{2} \to \sigma _{v}^{2}
> 0$ as $n \to \infty $). For any $n \geq n_{0}$, let
\begin{equation*}
X_{j,n} := \frac{1}{\sqrt{n}} \frac{Z_{j,v,n}^{(1)} - Z_{j,v,n}^{(2)}}{
\sigma _{v,n}}, \quad j \in \mathbb{Z}^{d},
\end{equation*}
and denote by $F_{n}$ the distribution function of $\sum _{j \in W} X
_{j,n}$. In the proof of\break  Lemma~\ref{lem:covariance_convergence_lemma}
we have seen that $(X_{j,n})_{j \in \mathbb{Z}^{d}}$ is a centered
$m$-dependent random field and\break  $ \mathbb{E}|X_{j,n}|^{2+\tau } \leq c
n^{-1 - \tau /2} \sigma _{v,n}^{-(2+\tau )}$ for any $n \in \mathbb{N}$
and a constant $c > 0$. Hence, applying~\cite[Theorem 2.6]{Chen2004}
with $p = 2+\tau $ yields
\begin{equation*}
\sup \limits _{x \in \mathbb{R}} |F_{n}(x) - \phi (x)| \leq 75 c (m+1)^{(1
+ \tau )d} \sigma _{v,n}^{-(2+\tau )} n^{-\tau / 2} \to 0,
\end{equation*}
as $n \to \infty $. This completes the proof.
\end{proof}

\subsection{Remainder term}

In this section, we show that the remainder term\index{remainder term} $E_{2}$ is
stochastically negligible as the sample size $n$ tends to infinity.

\begin{theo}
\label{theo:remainder_negligible_theorem}
Let $\gamma = 0$ and suppose that the assumptions of
Theorem~\ref{theo:clt_univariate} are satisfied. Then, as $n \to
\infty $,
\begin{equation*}
E_{2} = \sqrt{n} \Big < \mathcal{F}_{+}[\mathcal{G}^{-1 \ast }v],
\Big \{ R_{n} + \theta \frac{\psi - \hat{\psi }}{\psi ^{2}} \Eins
_{ \{ |\hat{\psi }| \leq n^{-1/2} \} } \Big \} \mathcal{F}_{+}[K_{b}]
\Big >_{L^{2}(\mathbb{R})}
\overset{\mathbb{P}}{\to } 0,
\end{equation*}
for any $v \in \mathcal{U}(\xi ,\beta _{2})$.
\end{theo}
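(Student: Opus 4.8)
The plan is to decompose the remainder term as $E_2=E_{2,1}+E_{2,2}$ with
\begin{align*}
E_{2,1} &= \sqrt{n}\,\Big\langle \mathcal{F}_{+}[\mathcal{G}^{-1 \ast}v],\, R_n\,\mathcal{F}_{+}[K_b]\Big\rangle_{L^{2}(\mathbb{R})},\\
E_{2,2} &= \sqrt{n}\,\Big\langle \mathcal{F}_{+}[\mathcal{G}^{-1 \ast}v],\, \theta\,\tfrac{\psi-\hat{\psi}}{\psi^{2}}\,\Eins_{\{|\hat{\psi}|\le n^{-1/2}\}}\,\mathcal{F}_{+}[K_b]\Big\rangle_{L^{2}(\mathbb{R})},
\end{align*}
and to prove that $\mathbb{E}|E_{2,1}|\to0$ and $\mathbb{E}|E_{2,2}|\to0$, which clearly implies the stated convergence in probability. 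Since $\tfrac{\theta}{\psi^{2}}=i\big(\tfrac1\psi\big)'$, the quantity $\mathbb{E}|E_{2,2}|$ is dominated, up to a constant, by the first of the two summands into which $\mathbb{E}|E_4|$ was split in the proof of the previous lemma; the estimate carried out there applies verbatim and gives $\mathbb{E}|E_{2,2}|\lesssim n^{-1/4}\to0$. The substance of the proof is thus the treatment of $E_{2,1}$.

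For $E_{2,1}$ I would argue as follows. Since $\mathcal{F}_{+}[K_b]$ is bounded by $S$ and supported in $[-b_n^{-1},b_n^{-1}]$, on which $|\psi(x)|>2n^{-1/2}$ for all large $n$ by~\eqref{eq:psi_bigger_n_half}, one has $\mathbb{E}|E_{2,1}|\le S\sqrt{n}\int_{-b_n^{-1}}^{b_n^{-1}}|\mathcal{F}_{+}[\mathcal{G}^{-1 \ast}v](x)|\,\mathbb{E}|R_n(x)|\,dx$. Resolving the thresholding in $R_n=\big(1-\tfrac{\hat\psi}{\psi}\big)\big(\tfrac{\hat\theta}{\tilde\psi}-\tfrac{\theta}{\psi}\big)$, on $\{|\hat\psi(x)|>n^{-1/2}\}$ one has the algebraic identity $R_n=\frac{(\psi-\hat\psi)(\hat\theta-\theta)}{\psi\hat\psi}+\frac{\theta(\hat\psi-\psi)^{2}}{\hat\psi\psi^{2}}$, while on $\{|\hat\psi(x)|\le n^{-1/2}\}$ one has $R_n=-\big(1-\tfrac{\hat\psi}{\psi}\big)\mathcal{F}_{+}[uv_1]$ with $\mathcal{F}_{+}[uv_1]$ bounded. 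I would split the good event further according to whether $|\hat\psi|>|\psi|/2$ or $n^{-1/2}<|\hat\psi|\le|\psi|/2$: on $\{|\hat\psi|>|\psi|/2\}$ one replaces $|\hat\psi|^{-1}$ by $2|\psi|^{-1}$ and, by the Cauchy--Schwarz inequality and the $m$-dependence variance bounds $\mathbb{E}|\hat\psi(x)-\psi(x)|^{2}\lesssim n^{-1}$, $\mathbb{E}|\hat\theta(x)-\theta(x)|^{2}\lesssim n^{-1}$ (the latter using $\mathbb{E}|Y_0|^{2}<\infty$, which holds by Proposition~\ref{lem:properties_transferred}, (b)), together with Lemma~\ref{lem:important_lemma}, (2), one gets $\mathbb{E}|R_n(x)\Eins_{\{|\hat\psi|>|\psi|/2\}}|\lesssim n^{-1}(1+|x|)^{1-2\varepsilon}$; on the two deviation events one uses the crude bound $|\hat\psi|^{-1}\le n^{1/2}$, the concentration estimate $\mathbb{P}(|\hat\psi(x)|\le n^{-1/2})\lesssim(n|\psi(x)|^{2})^{-p}$ (valid for every $p\ge1/2$ by~\eqref{eq:propb_estimation}, as $\hat\psi(x)$ is bounded) and the moment inequalities for $m$-dependent sums (Lemma~\ref{lem:moment_bound_theta} and its analogue for $\hat\psi$), which provide the compensating powers of $n$.

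Inserting these bounds and applying Lemma~\ref{lem:important_lemma}, (2) once more, $\mathbb{E}|E_{2,1}|$ is bounded by a finite sum of quantities of the form $n^{-\kappa}\int_{-b_n^{-1}}^{b_n^{-1}}(1+|x|)^{-\xi+\rho}\,dx$ with $\kappa>0$ and $\rho\le1-2\varepsilon$. The decisive observation is that, because $b_n^{-1}\approx n^{1/(1-2\varepsilon)}$ up to logarithmic factors and $\tfrac{1/2-\varepsilon}{1-2\varepsilon}=\tfrac12$, the worst such quantity is of order $n^{(3/2-\varepsilon-\xi)/(1-2\varepsilon)}$ up to logarithms, which tends to $0$ exactly under the decay $|\mathcal{F}_{+}[\mathcal{G}^{-1 \ast}v](x)|\lesssim(1+x^{2})^{-\xi/2}$ postulated in Definition~\ref{def:admissible_function}, (iii); the logarithmic correction built into $b_n$ and, at the finer level, the finite $(2+\tau)$-moment of $\Lambda$ furnish the slack needed to reach precisely that threshold (cf. Remark~\ref{rem:admissible_function_discussed}, (c)). This gives $\mathbb{E}|E_{2,1}|\to0$, hence $E_2=E_{2,1}+E_{2,2}\overset{\mathbb{P}}{\to}0$. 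I expect the main obstacle to be the control of $E_{2,1}$ near the edge $|x|\approx b_n^{-1}$ of the effective frequency band, where $|\psi(x)|$ is of order $n^{-1/2}$ only up to a logarithmic factor, so the three ingredients --- the product structure of $R_n$, the concentration inequality~\eqref{eq:propb_estimation}, and the $\log$-slack in the bandwidth --- must be weighed carefully against the decay of $\mathcal{F}_{+}[\mathcal{G}^{-1 \ast}v]$; once this bookkeeping is done, $E_{2,2}$ as well as the remaining terms $E_3,E_4,E_5$ require no new ideas.
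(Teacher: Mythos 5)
Your splitting of $E_{2}$ into the $R_{n}$-part $E_{2,1}$ and the thresholding part $E_{2,2}$, and the observation that $\mathbb{E}|E_{2,2}|$ is exactly the first summand in the bound for $\mathbb{E}|E_{4}|$ (since $\theta/\psi^{2}=i(1/\psi)'$), are correct. But note that this is already a genuinely different route from the paper: there the two parts are \emph{not} separated. The paper first recombines them algebraically into $\frac{\psi}{\tilde\psi}\,\frac{\psi-\hat\psi}{\psi}\bigl(\frac{\hat\theta-\theta}{\psi}+i(\frac1\psi)'(\psi-\hat\psi)\bigr)$ --- the indicator term cancels $R_{n}$ exactly on $\{|\hat\psi|\le n^{-1/2}\}$ --- and then proves convergence in probability (not in mean) on high-probability events, using the uniform sup-norm concentration of $\hat\psi,\hat\theta$ over $[-b_n^{-1},b_n^{-1}]$ (Theorem~\ref{theo:conv_rate}, via the Bernstein inequality, a discretization and a truncation at $K_n=n^{\frac{1}{2(2+\tau)}}$), together with Corollaries~\ref{cor:sup_bn} and~\ref{cor:kappa_n}, the latter giving the logarithmic bound $|\psi|/|\tilde\psi|\le\kappa_n$. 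Definition~\ref{def:admissible_function}, (iii) is used there precisely to compare the threshold $n^{-1/2}\kappa_n^{-1}b_n^{3/2-\varepsilon-\xi}$ with the uniform rate $\varepsilon_n\approx n^{-\frac{1+\tau}{2(2+\tau)}}$; this is the only place where $\tau$ enters.

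The gap in your proposal sits exactly at the step you yourself flag as the main obstacle: the deviation events for $E_{2,1}$, and the recipe you give for them does not close as stated. With the crude bound $|\hat\psi|^{-1}\le n^{1/2}$ and the concentration estimate $\mathbb{P}(|\hat\psi(x)|\le n^{-1/2})\lesssim n^{-p}|\psi(x)|^{-2p}$, increasing $p$ buys nothing: since $|\psi(x)|^{-1}\lesssim(1+|x|)^{1/2-\varepsilon}$ and the band has width $b_n^{-1}\approx n^{1/(1-2\varepsilon)}$, and $\frac{1/2-\varepsilon}{1-2\varepsilon}=\frac12$, every extra power $n^{-p/2}$ gained from the probability is exactly offset by the band-edge growth of $|\psi|^{-p}$, so the ``compensating powers of $n$'' cancel identically in $p$. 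If, as your sketch suggests, one then pairs the remaining factors by Cauchy--Schwarz (wasting $|\hat\psi-\psi|\le2$ against the probability), the residual exponent forces $\xi>2-2\varepsilon$, which Definition~\ref{def:admissible_function}, (iii) does \emph{not} guarantee, since its lower bound is $2-2\varepsilon-(\frac12-\varepsilon)\frac{1+\tau}{2+\tau}<2-2\varepsilon$. To make your in-mean route work one has to keep the full products $(\hat\psi-\psi)(\hat\theta-\theta)$ and $(\hat\psi-\psi)^{2}$ inside a generalized H\"older bound (all moments of $\hat\psi-\psi$ are available since its summands are bounded; for $\hat\theta-\theta$ one needs Lemma~\ref{lem:moment_bound_theta} at order $2+\tau$), so that after the complete cancellation of powers of $n$ a genuine factor $\mathbb{P}(\cdot)^{\delta}$ survives; the worst band-edge contribution is then of order $n^{(3/2-\varepsilon-\xi)/(1-2\varepsilon)}$, which vanishes because $\xi>\frac32-\varepsilon$. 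Note this threshold is strictly weaker than (iii), so your claim that the argument closes ``exactly'' at the decay of Definition~\ref{def:admissible_function}, (iii), with the $(2+\tau)$-moment supplying the slack, misidentifies where (iii) and $\tau$ actually matter; in the paper they matter only through $\varepsilon_n$ in Theorem~\ref{theo:conv_rate}, which your scheme does not invoke. As written, the decisive estimate is not carried out and the hints provided for it would not, by themselves, yield it.
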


In order to prove Theorem~\ref{theo:remainder_negligible_theorem}, some
auxiliary statements are required. Therefore, we introduce the following
notation.

For any $t \in \mathbb{R}$, $j \in \mathbb{Z}^{d}$, let the centered
random variables $\xi _{j}^{(l)}(t)$, $\tilde{\xi }_{j}^{(l)}(t)$,
$l=1,2$, be given by
\begin{equation*}
\begin{split}
\xi _{j}^{(1)}(t)
& = \cos (tY_{j}) - \mathbb{E}\Big [ \cos (tY_{0})
\Big ],
\\
\xi _{j}^{(2)}(t)
& = \sin (tY_{j}) - \mathbb{E}\Big [ \sin (tY_{0})
\Big ],
\\
\tilde{\xi }_{j}^{(1)}(t)
& = Y_{j} \cos (tY_{j}) - \mathbb{E}\Big [ Y
_{0} \cos (tY_{0}) \Big ],
\\
\tilde{\xi }_{j}^{(2)}(t)
& = Y_{j} \sin (tY_{j}) - \mathbb{E}\Big [ Y
_{0} \sin (tY_{0}) \Big ].
\end{split}
\end{equation*}
Then $\hat{\psi } - \psi $ and $\hat{\theta } - \theta $ can be
rewritten as
\begin{equation*}
\begin{split}
\hat{\psi }(t) - \psi (t) ={}
& \frac{1}{n} \sum _{j \in W} \Big ( \xi
_{j}^{(1)}(t) + i \xi _{j}^{(2)}(t) \Big ) \quad \text{and}
\\
\hat{\theta }(t) - \theta (t) ={}
& \frac{1}{n} \sum _{j \in W} \Big (
\tilde{\xi }_{j}^{(1)}(t) + i \tilde{\xi }_{j}^{(2)}(t) \Big ).
\end{split}
\end{equation*}
In the sequel, we shortly write $\xi ^{(l)}(t)$, $\tilde{\xi }^{(l)}(t)$
for the random fields\index{random ! field} $(\xi _{j}^{(l)}(t))_{j \in \mathbb{Z}^{d}}$ and
$(\tilde{\xi }_{j}^{(l)}(t))_{j \in \mathbb{Z}^{d}}$, $l=1,2$. Moreover,
for any $K > 0$, we define the random fields\index{random ! field} $\bar{\xi }_{K}^{(l)}(t)
= (\bar{\xi }_{j,K}^{(l)}(t))_{j \in \mathbb{Z}^{d}}$, $l=1,2$, by
\begin{equation*}
\begin{split}
\bar{\xi }_{j,K}^{(1)}(t)
& = Y_{j} \cos (tY_{j}) \Eins_{[-K,K]}(Y
_{j}) - \mathbb{E}\Big [ Y_{0} \cos (tY_{0}) \Eins_{[-K,K]}(Y_{0})
\Big ],
\\
\bar{\xi }_{j,K}^{(2)}(t)
& = Y_{j} \sin (tY_{j}) \Eins_{[-K,K]}(Y
_{j}) - \mathbb{E}\Big [ Y_{0} \sin (tY_{0}) \Eins_{[-K,K]}(Y_{0})
\Big ].
\end{split}
\end{equation*}
For any finite subset $V \subset \mathbb{Z}^{d}$ and any random field
$Y = (Y_{j})_{j \in \mathbb{Z}^{d}}$, let
\begin{equation*}
S_{V}(Y) = \sum _{j \in V} Y_{j}.
\end{equation*}

\begin{lem}
\label{lem:pconvrate}
Let the assumptions of Theorem~\ref{theo:clt_univariate} be satisfied
and suppose $K \geq 1$. Then
%
\begin{equation}
\label{eq:1ststatement}
\mathbb{P}(|S_{W}(\xi ^{(l)}(t))| \geq x) \leq 2 \exp \left ( -
\frac{1}{8(m+1)^{d}} \frac{x^{2}}{x + 2 |W|} \right )
\end{equation}
and
%
\begin{equation}
\label{eq:2ndstatement}
\mathbb{P}(|S_{W}(\bar{\xi }_{K}^{(l)}(t))| \geq x) \leq 2 \exp \left (
-\frac{1}{8(m+1)^{d} K^{2}} \frac{x^{2}}{x + 2 |W|} \right ),
\end{equation}
for any $t \in \mathbb{R}$, $x \geq 0$ and $l=1,2$.
\end{lem}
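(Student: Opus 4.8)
The plan is to prove \eqref{eq:1ststatement} and \eqref{eq:2ndstatement} together, since they differ only through the bounds available on the summands. The first step is to record that, for each fixed $t \in \mathbb{R}$ and $l \in \{1,2\}$, the random field $(\xi_j^{(l)}(t))_{j\in\mathbb{Z}^d}$ is centered, satisfies $|\xi_j^{(l)}(t)| \le 2$ almost surely, and is $m$-dependent: each $\xi_j^{(l)}(t)$ is one fixed measurable function of $Y_j = X(j\Delta)$, and $(Y_j)_{j\in\mathbb{Z}^d}$ is $m$-dependent by Lemma~\ref{lem:m_depend_X}. Moreover $\mathbb{E}[\xi_j^{(l)}(t)^2] = \operatorname{Var}(\cos(tY_0))$ resp. $\operatorname{Var}(\sin(tY_0))$, both $\le 1$. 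The field $(\bar{\xi}_{j,K}^{(l)}(t))_{j\in\mathbb{Z}^d}$ has the same structure, with the bound $2$ replaced by $2K$ (since $|Y_0\cos(tY_0)\Eins_{[-K,K]}(Y_0)| \le K$) and the second moment replaced by $\mathbb{E}[Y_0^2\Eins_{[-K,K]}(Y_0)] \le K^2$.

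The heart of the matter is a Bernstein-type exponential inequality for $m$-dependent fields, which I would derive by a colouring argument. Set $R := (m+1)^d$ and decompose $W = \bigcup_{r=1}^{R} W_r$ according to the residue of $j$ modulo $m+1$ in each coordinate; then any two distinct indices lying in the same $W_r$ are at $\|\cdot\|_\infty$-distance $\ge m+1 > m$, so inside each $W_r$ the variables $(\xi_j^{(l)}(t))_{j\in W_r}$ are independent. For $\lambda \ge 0$, Hölder's inequality with all exponents equal to $R$ gives
\begin{equation*}
\mathbb{E}\bigl[e^{\lambda S_W(\xi^{(l)}(t))}\bigr] = \mathbb{E}\Bigl[\prod_{r=1}^{R} e^{\lambda S_{W_r}(\xi^{(l)}(t))}\Bigr] \le \prod_{r=1}^{R}\Bigl(\mathbb{E}\bigl[e^{R\lambda S_{W_r}(\xi^{(l)}(t))}\bigr]\Bigr)^{1/R}.
\end{equation*}
Inside each factor, independence together with the classical bound $\mathbb{E}[e^{s\eta}] \le \exp\bigl(\tfrac{s^2\mathbb{E}[\eta^2]/2}{1-sb/3}\bigr)$ for a centered $\eta$ with $|\eta|\le b$ and $0 \le s < 3/b$ (applied with $s = R\lambda$, $b = 2$) yields, after multiplying out over $W_r$ and over $r$, the estimate $\log\mathbb{E}[e^{\lambda S_W(\xi^{(l)}(t))}] \le \tfrac{R|W|\lambda^2/2}{1-2R\lambda/3}$ for $0 \le \lambda < \tfrac{3}{2R}$. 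A Chernoff bound with the usual optimisation over $\lambda$, followed by the elementary passage from a Bennett-type to a Bernstein-type bound, gives $\mathbb{P}(S_W(\xi^{(l)}(t)) \ge x) \le \exp\bigl(-\tfrac{x^2}{2R(|W| + 2x/3)}\bigr)$, and the same for $-S_W(\xi^{(l)}(t))$; adding the two and crudely relaxing $2R(|W| + 2x/3) \le 8R(2|W| + x)$ produces \eqref{eq:1ststatement}. (In fact Hoeffding's lemma $\mathbb{E}[e^{s\eta}] \le e^{s^2b^2/2}$ for centered $|\eta|\le b$ alone already yields a bound of the form $2\exp(-c\,x^2/(R|W|))$, which a fortiori dominates \eqref{eq:1ststatement}.)

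For \eqref{eq:2ndstatement} I would repeat the argument verbatim with $b = 2K$ and second moment bounded by $K^2$, obtaining $\log\mathbb{E}[e^{\lambda S_W(\bar{\xi}_K^{(l)}(t))}] \le \tfrac{RK^2|W|\lambda^2/2}{1-2RK\lambda/3}$ and hence $\mathbb{P}(|S_W(\bar{\xi}_K^{(l)}(t))| \ge x) \le 2\exp\bigl(-\tfrac{x^2}{2RK^2(|W| + 2x/3)}\bigr)$; since $K \ge 1$, the denominator is at most $8RK^2(2|W| + x)$, which is the claimed inequality. I expect the only fussy point to be exactly matching the universal constant $8$ and the particular shape $x + 2|W|$ of the denominator: this is purely bookkeeping, since the colouring-plus-Bernstein bound one obtains is slightly sharper than what is stated, and no new idea is needed beyond tracking the factor $(m+1)^d$ coming from the number of colour classes. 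One could equally well invoke an off-the-shelf exponential inequality for $m$-dependent random fields (e.g.~\cite{Heinrich,dedecker}) applied separately to the $R$ colour-subfields.
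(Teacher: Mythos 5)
Your proof is correct, but it takes a different route from the paper. The paper does not re-derive an exponential inequality: it simply checks the Bernstein-type moment condition \eqref{eq:bernstein_cond}, namely $|\mathbb{E}[\xi_j^{(l)}(t)^p]|\le 2^{p-2}\,\mathbb{E}[\xi_j^{(l)}(t)^2]$ and $|\mathbb{E}[\bar\xi_{j,K}^{(l)}(t)^p]|\le (2K)^{p-2}\,\mathbb{E}[\bar\xi_{j,K}^{(l)}(t)^2]$ for $p\ge 3$, and then invokes the ready-made inequality for $m$-dependent fields stated as Theorem~\ref{theo:bernstein} (Heinrich's bound, quoted in the Appendix) with $H=2$ resp.\ $H=2K$, using $\sum_{j\in W}\mathbb{E}[\bar\xi_{j,K}^{(l)}(t)^2]\le 4K^2|W|$; the factor $(m+1)^d$ and the two-regime structure are already built into that theorem, and combining the regimes gives exactly the shape $x^2/(x+2|W|)$ with constant $8$. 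You instead reconstruct such an inequality from scratch via the colouring decomposition $W=\cup_{r=1}^{(m+1)^d}W_r$, generalized H\"older on the moment generating function, the classical Bernstein (or even Hoeffding) mgf bound within each independent colour class, and a Chernoff argument; your constant bookkeeping checks out (indeed your intermediate bounds are sharper than the stated ones, and your remark that the Hoeffding-only version already dominates \eqref{eq:1ststatement} is correct since $|W|\le x+2|W|$). What the paper's route buys is brevity and no need to justify the blocking; what yours buys is a self-contained, elementary argument that does not rely on the external cumulant-based result of Heinrich. The only point you pass over quickly is the mutual (not just two-block) independence of the variables within a colour class: this does follow, either by induction from the paper's definition of $m$-dependence (split off one singleton against the rest and iterate) or directly from the disjointness of the supports of $f(j\Delta-\cdot)$ for indices in the same class together with the independent scattering of $\Lambda$, so it is a presentational gap rather than a mathematical one.
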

\begin{proof}
Since $|\xi _{j}^{(l)}(t)| \leq 2$ for all $t \in \mathbb{R}$,
$j \in \mathbb{Z}^{d}$ and $l=1,2$, we have that
\begin{align*}
|\mathbb{E}[\xi _{j}^{(l)}(t)^{p}] |
& \leq \mathbb{E}[
|\xi _{j}^{(l)}(t)|^{p-2} \xi _{j}^{(l)}(t)^{2} ]
\leq 2^{p-2}
\mathbb{E}[ \xi _{j}^{(l)}(t)^{2} ], \quad p=3,4,\dots ;
\end{align*}
hence, Theorem~\ref{theo:bernstein} (with $H=2$)
implies~\eqref{eq:1ststatement}. Next, we obtain
\begin{align*}
|\mathbb{E}[ \bar{\xi }_{j,K}^{(l)}(t)^{p} ] | \leq \mathbb{E}[ |\bar{
\xi }_{j,K}^{(l)}(t)|^{p-2} |\bar{\xi }_{j,K}^{(l)}(t)|^{2} ]
\leq (2K)^{p-2} \mathbb{E}[ \bar{\xi }_{j,K}^{(l)}(t)^{2} ], \quad p=3,4,
\dots .
\end{align*}
Taking into account that $\sum _{j\in W} \mathbb{E}[ \bar{\xi }_{j,K}
^{(l)}(t)^{2} ] \leq 4 K^{2} |W|$, Theorem~\ref{theo:bernstein} (with
$H=2K$) yields the bound in~\eqref{eq:2ndstatement}.
\end{proof}

\begin{lem}
\label{lem:conv_rate}
Let the assumptions of Theorem~\ref{theo:clt_univariate} be satisfied
and let $n = |W|$. Moreover, for any $n$, suppose numbers $\varepsilon _{n} > 0$, $K_{n} \geq 1$, such that $\varepsilon _{n} \to 0$ and $K_{n} \to
\infty $ as $n \to \infty $. Then, for any $n$ with $\varepsilon _{n} <
\min \{ 1, \frac{T}{4} \}$,
%
\begin{align}
\label{eq:est_phi}
\mathbb{P}\Big ( \sup _{t \in [-T,T]} n^{-1} |S_{W}(\xi ^{(l)}(t))|
\geq \varepsilon _{n} \Big )
& \leq C_{1} \sqrt{ \frac{T}{\varepsilon
_{n}}} \exp \left \lbrace -
\frac{n \varepsilon _{n}^{2}}{160(m+1)^{d}} \right \rbrace \end{align}
and
%
\begin{equation}
\label{eq:est_phi_prime}
\begin{split}
\mathbb{P}\Big (\sup _{t \in [-T,T]} n^{-1}|S_{W}(\tilde{\xi }^{(l)}(t))|
\geq \varepsilon _{n} \Big ) \leq{}
&
C_{2} \sqrt{ \frac{T}{\varepsilon
_{n}}} \exp \left \lbrace - \frac{n \varepsilon _{n}^{2}}{576(m+1)^{d}
K_{n}^{2}} \right \rbrace \\
& + \frac{C_{3}}{\varepsilon _{n} K_{n}^{1+\tau }},
\end{split}
\end{equation}
$l =1,2$, where $C_{1} = 4(1+2\mathbb{E}|Y_{0}\xch{|)}{|}$, $C_{2} = 4
\sqrt{2}(1+2\mathbb{E}|Y_{0}|^{2} )$ and $C_{3} = 8 \mathbb{E}|Y_{0}|^{2+
\tau }$.
\end{lem}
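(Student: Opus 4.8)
The plan is to prove both inequalities by the same discretization-plus-modulus-of-continuity scheme: reduce the supremum over $t\in[-T,T]$ to a maximum over finitely many points, bound those points by the pointwise Bernstein inequalities of Lemma~\ref{lem:pconvrate}, and control the oscillation between grid points by the Lipschitz estimates $|\cos(tx)-\cos(sx)|\le|t-s|\,|x|$ and $|\sin(tx)-\sin(sx)|\le|t-s|\,|x|$. Concretely, I would fix $n$, choose a number $N=N_n$, partition $[-T,T]$ into intervals $I_1,\dots,I_N$ of length $2T/N$ with left endpoints $t_1<\dots<t_N$, and for $\zeta\in\{\xi^{(l)},\tilde\xi^{(l)}\}$ write, for $t\in I_k$, $n^{-1}|S_W(\zeta(t))|\le n^{-1}|S_W(\zeta(t_k))|+\mathrm{osc}_k$ with $\mathrm{osc}_k:=\sup_{s\in I_k}n^{-1}|S_W(\zeta(s)-\zeta(t_k))|$, so that
\[
\mathbb{P}\Big(\sup_{t\in[-T,T]}n^{-1}|S_W(\zeta(t))|\ge\varepsilon_n\Big)\le\sum_{k=1}^N\mathbb{P}\big(n^{-1}|S_W(\zeta(t_k))|\ge\tfrac{\varepsilon_n}{2}\big)+\mathbb{P}\big(\max_{k\le N}\mathrm{osc}_k\ge\tfrac{\varepsilon_n}{2}\big).
\]

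For \eqref{eq:est_phi} ($\zeta=\xi^{(l)}$) I would bound each summand in the first sum by \eqref{eq:1ststatement} with $x=n\varepsilon_n/2$; since $\varepsilon_n<1$ the denominator $n\varepsilon_n/2+2n$ is at most a fixed multiple of $n$, each term is $\le 2\exp(-n\varepsilon_n^2/(c(m+1)^d))$, and a choice $N_n\asymp\sqrt{T/\varepsilon_n}$ (here $\varepsilon_n<T/4$ enters) produces the prefactor $C_1\sqrt{T/\varepsilon_n}$ and the exponent $-n\varepsilon_n^2/(160(m+1)^d)$. For the oscillation I would use $|\xi_j^{(l)}(s)-\xi_j^{(l)}(t_k)|\le\frac{2T}{N}(|Y_j|+\mathbb{E}|Y_0|)$, observe that the increment field $j\mapsto\xi_j^{(l)}(s)-\xi_j^{(l)}(t_k)$ is centered, bounded by $4$, and has per-coordinate variance $\le(2T/N)^2\mathbb{E}Y_0^2$, and apply the exponential inequality for $m$-dependent fields (Theorem~\ref{theo:bernstein}) to \emph{this} field — its small variance, rather than its crude $L^\infty$ bound, is what yields an exponentially small bound — together with a further fine sub-discretization of each $I_k$ to pass from the fixed-pair estimate to $\sup_{s\in I_k}$. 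Collecting the two contributions gives \eqref{eq:est_phi}, the constant $C_1=4(1+2\mathbb{E}|Y_0|)$ reflecting the Lipschitz constant $\mathbb{E}|Y_0|$ of the centering term.

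For \eqref{eq:est_phi_prime} the new feature is that $\tilde\xi^{(l)}(t)$ is unbounded, so I would first split $\tilde\xi_j^{(l)}(t)=\bar\xi_{j,K_n}^{(l)}(t)+\rho_j^{(l)}(t)$, with $\bar\xi_{j,K_n}^{(l)}$ the $K_n$-truncation from Lemma~\ref{lem:pconvrate} and $\rho_j^{(l)}(t)=Y_j\{\cos(tY_j)\text{ or }\sin(tY_j)\}\Eins_{\{|Y_j|>K_n\}}-\mathbb{E}[\,\cdot\,]$. The truncated part is handled exactly as before, using \eqref{eq:2ndstatement} at the grid points and the now \emph{deterministic} Lipschitz bound $|\bar\xi_{j,K_n}^{(l)}(s)-\bar\xi_{j,K_n}^{(l)}(t_k)|\le\frac{2T}{N}(Y_j^2\Eins_{\{|Y_j|\le K_n\}}+\mathbb{E}Y_0^2)\le\frac{4T}{N}K_n^2$ for the oscillation, which produces the first summand on the right of \eqref{eq:est_phi_prime} with exponent $-n\varepsilon_n^2/(576(m+1)^dK_n^2)$. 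For the tail part I would use $n^{-1}\sum_{j\in W}|\rho_j^{(l)}(t)|\le n^{-1}\sum_{j\in W}|Y_j|\Eins_{\{|Y_j|>K_n\}}+\mathbb{E}[|Y_0|\Eins_{\{|Y_0|>K_n\}}]$, whose expectation is $\le 2\,\mathbb{E}[|Y_0|\Eins_{\{|Y_0|>K_n\}}]\le 2\,\mathbb{E}|Y_0|^{2+\tau}K_n^{-(1+\tau)}$ by the finite $(2+\tau)$-moment of $Y_0$ (which holds under Assumption~\ref{ass:basic_assumptions}); Markov's inequality then bounds $\mathbb{P}(\sup_t n^{-1}|S_W(\rho^{(l)}(t))|\ge\varepsilon_n/4)$ by $C_3/(\varepsilon_n K_n^{1+\tau})$ with $C_3=8\,\mathbb{E}|Y_0|^{2+\tau}$.

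The hard part will be the uniform-in-$t$ control: one must calibrate $N_n$ so that the union-bound factor does not spoil the exponential rate while the oscillation over the resulting intervals stays below $\varepsilon_n/2$. For $\xi^{(l)}$ this forces one to exploit the smallness of the increment variance rather than a plain Lipschitz estimate (a plain estimate would only give control of the random quantity $n^{-1}\sum_{j\in W}|Y_j|$, whose tails, under only a $(2+\tau)$-th moment, are far too weak); for $\tilde\xi^{(l)}$ it forces the truncation at $K_n$, whose residual — genuinely unavoidable with only a $(2+\tau)$-th moment — is exactly the extra term $C_3/(\varepsilon_n K_n^{1+\tau})$ appearing in \eqref{eq:est_phi_prime}. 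Getting the precise numerical constants ($160$, $576$, the $C_i$) then amounts to a careful but routine bookkeeping of these steps.
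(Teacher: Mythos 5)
Your skeleton (a grid on $[-T,T]$, the pointwise Bernstein bounds of Lemma~\ref{lem:pconvrate} at the grid points, Lipschitz control of the oscillation between grid points, and a truncation at $K_n$ with a Markov/moment bound for the tail of $\tilde{\xi}^{(l)}$) is the same as the paper's, and your tail estimate yielding $C_{3}/(\varepsilon _{n}K_{n}^{1+\tau })$ is essentially the paper's (Markov plus the $(2+\tau )$-moment). The gap is in the calibration of the grid and the treatment of the oscillation. With $N_{n}\asymp \sqrt{T/\varepsilon _{n}}$ the increment field $\xi _{j}^{(l)}(s)-\xi _{j}^{(l)}(t_{k})$ has per-site variance at most $(2T/N_{n})^{2}\mathbb{E}Y_{0}^{2}\asymp T\varepsilon _{n}\mathbb{E}Y_{0}^{2}$, so Theorem~\ref{theo:bernstein} at the threshold $n\varepsilon _{n}/2$ gives (in the Gaussian regime) an exponent of order $n\varepsilon _{n}/\big ((m+1)^{d}T\,\mathbb{E}Y_{0}^{2}\big )$, not the claimed $n\varepsilon _{n}^{2}/(160(m+1)^{d})$; the $T$ in the denominator is fatal exactly where the lemma is used (Corollary~\ref{cor:sup_bn} takes $T=b_{n}^{-1}\to \infty $ and $\varepsilon _{n}\asymp b_{n}^{1/2-\varepsilon }$, for which $n\varepsilon _{n}/T\to 0$). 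Moreover, the ``further fine sub-discretization'' you invoke to pass from a fixed pair $(s,t_{k})$ to $\sup _{s\in I_{k}}$ is precisely the original uniformity problem again and is left unresolved; and for the truncated field your deterministic oscillation bound $4TK_{n}^{2}/N_{n}\asymp K_{n}^{2}\sqrt{T\varepsilon _{n}}$ is nowhere near $\varepsilon _{n}/2$ at your choice of $N_{n}$. So, as calibrated, your plan does not prove \eqref{eq:est_phi} nor the exponential term in \eqref{eq:est_phi_prime}.

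The route you explicitly rule out is in fact the one the paper takes, and it works because the grid is chosen \emph{exponentially} fine, not of size $\sqrt{T/\varepsilon _{n}}$: with $J=\big \lfloor \big (\tfrac{T}{\varepsilon _{n}}\exp \{\tfrac{1}{16(m+1)^{d}}\tfrac{n\varepsilon _{n}^{2}}{\varepsilon _{n}+4}\}\big )^{1/2}\big \rfloor $ equidistant spacings, the union bound over grid points costs $4J\exp \{-\tfrac{1}{16(m+1)^{d}}\tfrac{n\varepsilon _{n}^{2}}{\varepsilon _{n}+4}\}$, while the oscillation over a cell of width $T/J$ is at most $\tfrac{T}{J}\,n^{-1}\sum _{j\in W}(|Y_{j}|+\mathbb{E}|Y_{0}|)$, which a plain first-moment Markov inequality bounds by $4T\mathbb{E}|Y_{0}|/(J\varepsilon _{n})$ --- exponentially small because $J$ is exponentially large. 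Balancing the two terms produces both the prefactor $\sqrt{T/\varepsilon _{n}}$ and the exponent $-\tfrac{1}{32(m+1)^{d}}\tfrac{n\varepsilon _{n}^{2}}{\varepsilon _{n}+4}$, and $\varepsilon _{n}<1$ turns $32(\varepsilon _{n}+4)$ into the constant $160$; the identical scheme applied to $\bar{\xi }_{K_{n}}^{(l)}$ (whose $t$-Lipschitz constant involves $Y_{j}^{2}$, whence $\mathbb{E}|Y_{0}|^{2}$ in $C_{2}$, and with $H=2K_{n}$ in the Bernstein bound), combined with your tail term at level $\varepsilon _{n}/2$, gives \eqref{eq:est_phi_prime} with the constant $576$. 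In short, first moments do suffice for the oscillation --- not because the tails of $n^{-1}\sum _{j}|Y_{j}|$ are strong, but because the cell width is exponentially small; replacing your $N_{n}$ by this exponential $J$ and dropping the Bernstein-on-increments detour repairs your argument, while as written it leaves a genuine gap.
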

\begin{proof}
We use the same idea as in the proof of~\cite[Theorem
2]{BelPanWoern16}: divide the interval $[-T,T]$ by $2J$ equidistant
points $(t_{k})_{k=1,\dots ,2J} = \mathcal{D}$, where $t_{k} = -T + k
\frac{T}{J}$, $k=1,\dots ,2J$. Then, for any $t \in [-T,T]$ such that
$|t-t_{k}| \leq \frac{T}{J}$, we have for any $j \in \mathbb{Z}^{d}$
that
\begin{align*}
|\xi ^{(l)}_{j}(t)-\xi ^{(l)}_{j}(t_{k})| \leq |t-t_{k}| (|Y_{j}|+
\mathbb{E}|Y_{0}|) \leq (|Y_{j}|+\mathbb{E}|Y_{0}|) \frac{T}{J},
\quad l = 1,2.
\end{align*}
Hence, by Markov's inequality and Lemma~\ref{lem:pconvrate}, for any
$n \in \mathbb{N}$, we obtain that
\begingroup
\abovedisplayskip=11pt
\belowdisplayskip=11pt
\begin{align*}
& \mathbb{P}\Bigl(\sup _{t \in [-T,T]} n^{-1} |S_{W}(\xi ^{(l)}(t))| \geq
\varepsilon _{n}\Bigr) = \mathbb{P}\Bigl(\sup _{t_{k} \in \mathcal{D}}
\sup _{t: |t-t_{k}| \leq \frac{T}{J}}| S_{W}(\xi ^{(l)}(t))| \geq n
\varepsilon _{n}\Bigr)
\\
\leq{}
& \mathbb{P}\Bigl(\sup _{t_{k} \in \mathcal{D}} |S_{W}(\xi ^{(l)}(t
_{k}))| \geq \frac{n \varepsilon _{n}}{2}\Bigr)
\\
& + \mathbb{P}\Bigl(\sup _{t_{k} \in \mathcal{D}}
\sup _{t:|t-t_{k}| \leq \frac{T}{J}}| S_{W}(\xi ^{(l)}(t))-S_{W}(\xi
^{(l)}(t_{k}))| \geq \frac{n \varepsilon _{n}}{2}\Bigr)
\\
\leq{}
& \sum _{t_{k} \in \mathcal{D}} \mathbb{P}\Bigl( |S_{W}(\xi ^{(l)}(t
_{k}))| \geq \frac{n \varepsilon _{n}}{2}\Bigr)
+ \mathbb{P}\Bigl(\sum _{j \in W}
(|Y_{j}|+\mathbb{E}|Y_{0}|) \frac{T}{J} \geq
\frac{n \varepsilon _{n}}{2}\Bigr)
\\
\leq{}
& 4J \exp \left \lbrace -\frac{1}{16(m+1)^{d}} \frac{n
\varepsilon _{n}^{2}}{\varepsilon _{n} + 4} \right \rbrace + \frac{4T }{J
\varepsilon _{n}} \mathbb{E}|Y_{0}|,
\end{align*}
$l=1,2$. Now, let $n \in \mathbb{N}$ be such that $\varepsilon _{n} <
\frac{T}{4}$ and choose
\begin{equation*}
J = \Bigg \lfloor \left ( \frac{T}{\varepsilon _{n}}
\exp \left \lbrace \frac{1}{16(m+1)^{d}} \frac{n \varepsilon _{n}^{2}}{
\varepsilon _{n} + 4} \right \rbrace \right )^{1/2} \Bigg \rfloor ,
\end{equation*}
where $\lfloor x \rfloor $ denotes the \xch{integer}{integere} part of $x \in
\mathbb{R}$. Then
\begin{align*}
\mathbb{P}\Bigl ( \sup _{t \in [-T,T]} n^{-1} |S_{W}(\xi ^{(l)}(t))|
\geq \varepsilon _{n} \Bigr) \leq{}
& C_{1}
\sqrt{ \frac{T}{\varepsilon
_{n}}} \exp \left \lbrace - \frac{1}{32(m+1)^{d}} \frac{n \varepsilon
_{n}^{2}}{\varepsilon _{n} + 4} \right \rbrace
\end{align*}
with $C_{1} = 4(1+2\mathbb{E}|Y_{0}| )$. Applying the same arguments to
$\sup _{t \in [-T,T]}|S_{W}(\bar{\xi }_{K_{n}}^{(l)}(t)\xch{)|}{|}$ yields
\begin{align*}
\mathbb{P}\Bigl ( \sup _{t \in [-T,T]} n^{-1} |S_{W}(\bar{\xi }_{K_{n}}
^{(l)}(t))| \geq \varepsilon _{n}  \Bigr) \leq \tilde{C}
\sqrt{ \frac{T}{
\varepsilon _{n}}} \exp \left \lbrace -
\frac{1}{32(m+1)^{d} K_{n}^{2}} \frac{n \varepsilon _{n}^{2}}{
\varepsilon _{n} + 4} \right \rbrace ,
\end{align*}
whenever $\varepsilon _{n} < \frac{T}{4}$, where $\tilde{C} = 4(1+2
\mathbb{E}|Y_{0}|^{2} )$. Combining Markov's inequality, H\"{o}lder's
inequality and the finite $(2+\tau )$-moment property of $Y_{0}$ implies
\begin{align*}
&\mathbb{P}\Biggl(\sup _{t \in [-T,T]} n^{-1} \Big | \sum _{j \in W} \Big (
\tilde{\xi }_{j}^{(l)}(t) - \bar{\xi }_{j,K_{n}}^{(l)}(t) \Big )
\Big | \geq \frac{\varepsilon _{n}}{2}\Biggr)
\\
\leq{}
& \mathbb{P}\Biggl( \sum _{j \in W} (|Y_{j}|\Eins_{ ( K_{n},
\infty ) }(|Y_{j}|) + \mathbb{E}|Y_{0}|\Eins_{( K_{n}, \infty )}(|Y
_{0}|) ) \geq \frac{n \varepsilon _{n}}{2}\Biggr)
\\
\leq{}
& \frac{4}{\varepsilon _{n}} \left ( \mathbb{E}|Y_{0}|^{2+\tau }
\right )^{1/(2+\tau )} \mathbb{P}(|Y_{0}| > K_{n})^{(1+\tau )/(2+
\tau )}
\\
\leq{}
& \frac{4}{K_{n}^{1+\tau } \varepsilon _{n}} \mathbb{E}|Y_{0}|^{2+
\tau },
\end{align*}
\endgroup
$l=1,2$. All in all, we have for any $n$ such that $\varepsilon _{n} <
\frac{T}{2}$,
\begin{align*}
& \mathbb{P}\Bigl(\sup _{t \in [-T,T]} n^{-1} |S_{W}(\tilde{\xi }^{(l)}(t))|
\geq \varepsilon _{n}\Bigr)
\\
\leq{}
& \mathbb{P}\Bigl(\sup _{t \in [-T,T]} n^{-1} |S_{W}(\bar{\xi }_{K
_{n}}^{(l)}(t))| \geq \frac{\varepsilon _{n}}{2}\Bigr)
\\
&+ \mathbb{P}\Bigl(\sup _{t \in [-T,T]} n^{-1} |S_{W}(\tilde{\xi }^{(l)}(t)
- \bar{\xi }_{K_{n}}^{(l)}(t))| \geq \frac{\varepsilon _{n}}{2}\Bigr)
\\
\leq{}
& \sqrt{2} \tilde{C} \sqrt{ \frac{T}{\varepsilon _{n}}}
\exp \left \lbrace - \frac{1}{64(m+1)^{d} K_{n}^{2}} \frac{n
\varepsilon _{n}^{2}}{\varepsilon _{n} + 8} \right \rbrace + \frac{8}{K
_{n}^{1+\tau } \varepsilon _{n}} \mathbb{E}|Y_{0}|^{2+\tau }.
\end{align*}
Hence, it follows for any $n$ with $\varepsilon _{n} < \min \{ 1,
\frac{T}{4} \}$ that
\begin{align*}
\mathbb{P}\Bigl(\sup _{t \in [-T,T]} n^{-1}|S_{W}(\xi ^{(l)}(t))| \geq
\varepsilon _{n}\Bigr)
& \leq C_{1}
\sqrt{ \frac{T}{\varepsilon _{n}}}
\exp \left \lbrace - \frac{n\varepsilon _{n}^{2}}{160(m+1)^{d}} \right \rbrace \end{align*}
as well as
\begin{align*}
\mathbb{P}\Bigl(\sup _{t \in [-T,T]} n^{-1}|S_{W}(\tilde{\xi }_{j}^{(l)}(t))|
\geq \varepsilon _{n}\Bigr) \leq{}
& C_{2}
\sqrt{ \frac{T}{\varepsilon _{n}}} \exp \left \lbrace - \frac{n
\varepsilon _{n}^{2}}{576(m+1)^{d} K_{n}^{2}} \right \rbrace \\
& + \frac{C_{3}}{K_{n}^{1+\tau } \varepsilon _{n}},
\end{align*}
where $C_{2} = \sqrt{2} \tilde{C}$ and $C_{3} = 8 \mathbb{E}|Y_{0}|^{2+
\tau }$.
\end{proof}

\begin{theo}
\label{theo:conv_rate}
For some $\zeta > 0$, suppose $\varepsilon _{n} \approx n^{-\frac{1+
\tau }{2(2+\tau )}} \Big [ \log \Big ( T^{\frac{1}{2}} n^{\frac{1+
\tau }{4(2+\tau )}} \Big ) \Big ]^{\zeta + \frac{1}{2}}$ and $K_{n} = n
^{\frac{1}{2(2+\tau )}}$ in Lemma~\ref{lem:conv_rate}. Then, for $n$
sufficiently large,
\begin{align*}
\mathbb{P}\Big ( \max \Big \{ \sup _{t \in [-T,T]} |\hat{\psi }(t) -
\psi (t)|, \sup _{t \in [-T,T]}
|\hat{\theta }(t) - \theta (t)| \Big
\} > \varepsilon _{n} \Big ) \leq \tilde{C} y_{n},
\end{align*}
where
\begin{equation*}
y_{n} = \Big [ \log \Big ( T^{\frac{1}{2}} n^{
\frac{1+\tau }{4(2+\tau )}} \Big ) \Big ]^{-\frac{\zeta }{2}-
\frac{1}{4}}
\end{equation*}
and $\tilde{C} > 0$ is a constant (independent of $T$).
\end{theo}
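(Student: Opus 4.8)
The statement will follow directly from Lemma~\ref{lem:conv_rate} after substituting the prescribed $\varepsilon_n$ and $K_n$ and passing from the centred fields $\xi^{(l)},\tilde\xi^{(l)}$ back to $\hat\psi-\psi$ and $\hat\theta-\theta$. First I would split into real and imaginary parts: since
\begin{equation*}
\hat\psi(t)-\psi(t) = \tfrac1n S_W\bigl(\xi^{(1)}(t)\bigr) + \tfrac in S_W\bigl(\xi^{(2)}(t)\bigr), \qquad \hat\theta(t)-\theta(t) = \tfrac1n S_W\bigl(\tilde\xi^{(1)}(t)\bigr) + \tfrac in S_W\bigl(\tilde\xi^{(2)}(t)\bigr)
\end{equation*}
and the $S_W$-terms are real, the triangle inequality gives $|\hat\psi(t)-\psi(t)|\le\sum_{l=1,2}n^{-1}|S_W(\xi^{(l)}(t))|$ and similarly for $\hat\theta-\theta$; hence the event $\{\max\{\sup_{t\in[-T,T]}|\hat\psi(t)-\psi(t)|,\sup_{t\in[-T,T]}|\hat\theta(t)-\theta(t)|\}>\varepsilon_n\}$ is contained in the union of the four events $\{\sup_{t\in[-T,T]}n^{-1}|S_W(\xi^{(l)}(t))|>\varepsilon_n/2\}$ and $\{\sup_{t\in[-T,T]}n^{-1}|S_W(\tilde\xi^{(l)}(t))|>\varepsilon_n/2\}$, $l=1,2$. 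For $n$ large (so that $\varepsilon_n/2<\min\{1,T/4\}$, which holds since $\varepsilon_n\to0$), the union bound together with Lemma~\ref{lem:conv_rate} applied with $\varepsilon_n/2$ in place of $\varepsilon_n$ bounds the probability in question by a constant multiple of
\begin{equation*}
\sqrt{\tfrac{T}{\varepsilon_n}}\,\exp\bigl(-c\,n\varepsilon_n^2\bigr) + \sqrt{\tfrac{T}{\varepsilon_n}}\,\exp\Bigl(-c\,\tfrac{n\varepsilon_n^2}{K_n^2}\Bigr) + \frac{1}{\varepsilon_n K_n^{1+\tau}},
\end{equation*}
where $c>0$ depends only on $m$, $d$, $\tau$, and the suppressed outer constant only on the moments of $Y_0$ entering $C_1,C_2,C_3$.

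The rest is bookkeeping. Writing $L_n := \log\bigl(T^{1/2}n^{\frac{1+\tau}{4(2+\tau)}}\bigr)$, so that $y_n = L_n^{-\zeta/2-1/4}$, the definition of $\varepsilon_n$ yields the key identity $\sqrt{T/\varepsilon_n}\approx T^{1/2}n^{\frac{1+\tau}{4(2+\tau)}}L_n^{-\zeta/2-1/4} = e^{L_n}y_n$. A direct computation gives $n\varepsilon_n^2\approx n^{\frac{1}{2+\tau}}L_n^{2\zeta+1}$; since $K_n^2 = n^{\frac{1}{2+\tau}}$ this gives $n\varepsilon_n^2/K_n^2\approx L_n^{2\zeta+1}$, and since $K_n^{1+\tau} = n^{\frac{1+\tau}{2(2+\tau)}}$ cancels exactly the polynomial part of $\varepsilon_n^{-1}$ we get $(\varepsilon_n K_n^{1+\tau})^{-1}\approx L_n^{-\zeta-1/2} = y_n^2$. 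Hence the three summands above are, up to constants, $y_n\exp\bigl(L_n - c\,n^{\frac1{2+\tau}}L_n^{2\zeta+1}\bigr)$, $y_n\exp\bigl(L_n - c\,L_n^{2\zeta+1}\bigr)$ and $y_n^2$. Since $L_n\to\infty$ for each fixed $T>0$, for $n$ large the two exponents $L_n - c\,n^{\frac1{2+\tau}}L_n^{2\zeta+1}$ and $L_n - c\,L_n^{2\zeta+1}$ are negative and $y_n\le1$, so every summand is $\le y_n$; adding the four contributions gives $\tilde C y_n$, with $\tilde C>0$ built only from $C_1,C_2,C_3$ and the combinatorial factors, hence independent of $T$.

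The one genuinely delicate point — and the reason for the power $\zeta+\tfrac12$ of the logarithm in $\varepsilon_n$ and for the hypothesis $\zeta>0$ — is the middle term $y_n\exp(L_n - cL_n^{2\zeta+1})$ coming from $\hat\theta-\theta$: the truncation at level $K_n$ drops the effective exponential rate from a power of $n$ down to the mere power $L_n^{2\zeta+1}$ of $\log n$ (for $T$ fixed), so one must use $2\zeta+1>1$ in order to absorb the prefactor $e^{L_n}=T^{1/2}n^{\frac{1+\tau}{4(2+\tau)}}$; with $\zeta=0$ this step would break down. Everything else is routine substitution; the only care required is that the threshold on $n$ may depend on $T$ whereas the constant $\tilde C$ must not, which is indeed the case since $C_1,C_2,C_3$ involve only moments of $Y_0$.
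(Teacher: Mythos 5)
Your argument is correct and follows essentially the same route as the paper: a union bound over the four suprema $n^{-1}|S_W(\xi^{(l)}(t))|$, $n^{-1}|S_W(\tilde\xi^{(l)}(t))|$ via Lemma~\ref{lem:conv_rate}, then substitution of $\varepsilon_n$ and $K_n$, with the same key identities $(\varepsilon_n K_n^{1+\tau})^{-1}=L_n^{-\zeta-1/2}$ and $n\varepsilon_n^2/K_n^2\approx L_n^{2\zeta+1}$ (the paper writes this as the factor $(T^{1/2}n^{\frac{1+\tau}{4(2+\tau)}})^{1-r_n/(576(m+1)^d)}y_n$ with $r_n=L_n^{2\zeta}$). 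Your only deviation is the explicit passage through real and imaginary parts with threshold $\varepsilon_n/2$, which is a minor (and in fact slightly more careful) variant of the paper's first inequality and changes nothing in the conclusion.
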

\begin{proof}
By Lemma~\ref{lem:conv_rate} it follows that
\begingroup
\abovedisplayskip=5pt
\belowdisplayskip=5pt
\begin{align*}
& \mathbb{P}\Big ( \max \Big \{ \sup _{t \in [-T,T]} |\hat{\psi }(t) -
\psi (t)|, \sup _{t \in [-T,T]}
|\hat{\theta }(t) - \theta (t)| \Big
\} > \varepsilon _{n} \Big )
\\
\leq{}
& \sum _{l=1}^{2}\mathbb{P}\Big ( \sup _{t \in [-T,T]} n^{-1}|S
_{W}(\xi ^{(l)}(t))| \geq \varepsilon _{n} \Big )
\\
& + \sum _{l=1}^{2} \mathbb{P}\Big ( \sup _{t \in [-T,T]} n^{-1}|S_{W}(
\tilde{\xi }^{(l)}(t))| \geq \varepsilon _{n} \Big )
\\
\leq{}
& C \left ( \sqrt{ \frac{T}{\varepsilon _{n}}}
\exp \left \lbrace - \frac{n \varepsilon _{n}^{2}}{576(m+1)^{d} K_{n}
^{2}} \right \rbrace + \frac{1}{\varepsilon _{n} K_{n}^{1+\tau }}
\right ),
\end{align*}
for some constant $C > 0$.  Let $\varepsilon _{n} = n^{-\frac{1+
\tau }{2(2+\tau )}} \Big [ \log \Big ( T^{\frac{1}{2}} n^{\frac{1+
\tau }{4(2+\tau )}} \Big ) \Big ]^{\zeta + \frac{1}{2}}$, without loss of generality. Then we observe
that
\begin{equation*}
\begin{split}
\frac{1}{\varepsilon _{n} K_{n}^{1+\tau }} = \Big [ \log \Big ( T^{
\frac{1}{2}} n^{\frac{1+\tau }{4(2+\tau )}} \Big ) \Big ]^{-\zeta -
\frac{1}{2}}.
\end{split}
\end{equation*}
Moreover,
\begin{equation*}
\begin{split}
\sqrt{ \frac{T}{\varepsilon _{n}}} \exp \left \lbrace - \frac{n
\varepsilon _{n}^{2}}{576(m+1)^{d} K_{n}^{2}} \right \rbrace ={}
&
\Big ( T^{\frac{1}{2}} n^{\frac{1+\tau }{4(2+\tau )}} \Big )^{1-\frac{r
_{n}}{576(m+1)^{d}}}
y_{n},
\end{split}
\end{equation*}
with $r_{n} = \Big [ \log (T^{1/2} n^{\frac{1+\tau }{4(2+\tau )}})
\Big ]^{2\zeta }$. Hence, the assertion of the theorem follows.
\end{proof}

\begin{rem}
\begin{enumerate}[(a)]%
\item
Fix $T > 0$. Then, provided the assumptions of
Theorem~\ref{theo:clt_univariate} are satisfied,
Theorem~\ref{theo:conv_rate} states that
\begin{equation*}
\max \Big \{ \sup _{t \in [-T,T]} |\hat{\psi }(t) - \psi (t)|,
\sup _{t \in [-T,T]}
|\hat{\theta }(t) - \theta (t)| \Big \} =
\mathcal{O}_{\mathbb{P}} (\varepsilon _{n}),
\end{equation*}
as $n \to \infty $, where $\mathcal{O}_{\mathbb{P}}$ denotes the
probabilistic order of convergence.
\item
\textit{For large $n$} in Theorem~\ref{theo:conv_rate} is understood in
the following sense: for any fixed $m$, there exists $n_{0} = n_{0}(m)$
such that the bound holds for all $n \geq n_{0}$. Of course, the
function $m \mapsto n_{0}(m)$ is increasing.
\end{enumerate}
\end{rem}

The following corollary is an immediate consequence of
Theorem~\ref{theo:conv_rate}.

\begin{cor}
\label{cor:sup_bn}
Let the assumptions of Theorem~\ref{theo:clt_univariate} be satisfied.
Then
\begin{equation*}
\lim \limits _{n \to \infty } \mathbb{P}\Big (
\sup _{t \in [-b_{n}^{-1},b_{n}^{-1}]} |\hat{\psi }(t) - \psi (t)|
\geq c b_{n}^{ \frac{1}{2} - \varepsilon } \Big ) = 0
\end{equation*}
for any constant $c > 0$.
\end{cor}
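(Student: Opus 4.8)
The plan is to reduce to the exponential tail bound \eqref{eq:est_phi} of Lemma~\ref{lem:conv_rate}, applied on the window $[-b_n^{-1},b_n^{-1}]$ with the \emph{small} threshold $\varepsilon_n:=\tfrac{c}{2}b_n^{1/2-\varepsilon}$, and then to check that the resulting bound vanishes once the prescribed rate $b_n\approx n^{-1/(1-2\varepsilon)}(\log n)^{\eta+1/(1-2\varepsilon)}$ is plugged in. Note first that one cannot simply invoke Theorem~\ref{theo:conv_rate}: the balanced choice $\varepsilon_n\approx n^{-(1+\tau)/(2(2+\tau))}[\cdots]^{\zeta+1/2}$ made there is of strictly larger order than $c\,b_n^{1/2-\varepsilon}\approx c\,n^{-1/2}[\cdots]$, so the event in the corollary is \emph{not} contained in the event controlled by that theorem. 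Instead one argues directly, and since only $\hat\psi$ (not $\hat\theta$) is involved, only the truncation-free estimate \eqref{eq:est_phi} for the bounded fields $\xi^{(l)}$ is needed; in particular the auxiliary sequence $K_n$ plays no role.

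First I would write $\hat\psi(t)-\psi(t)=n^{-1}\sum_{j\in W}\big(\xi_j^{(1)}(t)+i\,\xi_j^{(2)}(t)\big)$, so that
\[
\Big\{\sup_{|t|\le b_n^{-1}}|\hat\psi(t)-\psi(t)|\ge c\,b_n^{1/2-\varepsilon}\Big\}\subseteq\bigcup_{l=1,2}\Big\{\sup_{|t|\le b_n^{-1}}n^{-1}|S_W(\xi^{(l)}(t))|\ge \tfrac{c}{2}b_n^{1/2-\varepsilon}\Big\}.
\]
Since $b_n\to0$ we have $b_n^{-1}\to\infty$ and $\varepsilon_n=\tfrac{c}{2}b_n^{1/2-\varepsilon}\to0$, so for all large $n$ the condition $\varepsilon_n<\min\{1,b_n^{-1}/4\}$ of Lemma~\ref{lem:conv_rate} holds; applying \eqref{eq:est_phi} and the union bound yields, for a suitable constant $c'>0$,
\[
\mathbb{P}\Big(\sup_{|t|\le b_n^{-1}}|\hat\psi(t)-\psi(t)|\ge c\,b_n^{1/2-\varepsilon}\Big)\;\lesssim\;\sqrt{\frac{b_n^{-1}}{b_n^{1/2-\varepsilon}}}\;\exp\Big\{-\frac{n\,b_n^{1-2\varepsilon}}{c'(m+1)^d}\Big\}.
\]

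Finally I would evaluate the two factors with $b_n\approx n^{-1/(1-2\varepsilon)}(\log n)^{\eta+1/(1-2\varepsilon)}$. The prefactor equals $b_n^{-3/4+\varepsilon/2}$ up to constants, hence is of polynomial order in $n$ (times a power of $\log n$). For the exponent, $b_n^{1-2\varepsilon}\approx n^{-1}(\log n)^{1+(1-2\varepsilon)\eta}$, so $n\,b_n^{1-2\varepsilon}\approx(\log n)^{1+(1-2\varepsilon)\eta}\to\infty$, because $\eta>0$ and, by Assumption~\ref{ass:basic_assumptions}, (5) (cf.\ Remark~\ref{rem:admissible_function_discussed}, (b)), $\varepsilon<\tfrac12$. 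Since $1+(1-2\varepsilon)\eta>1$, the stretched-exponential factor $\exp\{-\mathrm{const}\cdot(\log n)^{1+(1-2\varepsilon)\eta}\}$ decays faster than any negative power of $n$ and therefore dominates the polynomial prefactor, so the whole bound tends to $0$, which is the claim. The main (and essentially only) obstacle is this bookkeeping of exponents showing that a stretched-exponential term beats a polynomial prefactor; everything else is a direct application of the cited lemma.
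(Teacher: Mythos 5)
Your proposal is correct and follows essentially the same route as the paper: the paper's proof likewise applies the bound \eqref{eq:est_phi} of Lemma~\ref{lem:conv_rate} with $T=b_n^{-1}$ and $\varepsilon_n$ proportional to $b_n^{1/2-\varepsilon}$ (checking $\varepsilon_n<\min\{1,T/4\}$ for large $n$), and then substitutes $b_n\approx n^{-1/(1-2\varepsilon)}(\log n)^{\eta+1/(1-2\varepsilon)}$ so that the exponent $n\,b_n^{1-2\varepsilon}\approx(\log n)^{1+\eta(1-2\varepsilon)}$ beats the polynomial prefactor. Your extra care with the factor $c/2$ in the union bound and your remark that Theorem~\ref{theo:conv_rate} itself is not applicable here are both sound and do not change the argument.
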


\begin{proof}
Fix $c > 0$ and assume that $b_{n} = n^{-\frac{1}{1-2\varepsilon
}} \Big ( \log n \Big )^{\eta + \frac{1}{1-2\varepsilon }} $, without loss of generality. Since
$b_{n} \to 0$ as $n \to \infty $, there exists $n_{0} \in \mathbb{N}$
such that $c b_{n}^{ \frac{1}{2}-\varepsilon } < \min \{ 1, \frac{1}{4
b_{n}} \}$ for all $n \geq n_{0}$. Taking $\varepsilon _{n} = c b_{n}
^{\frac{1}{2}-\varepsilon }$ and $T = b_{n}^{-1}$ in
Lemma~\ref{lem:conv_rate}, it follows that
\begin{align*}
& \mathbb{P}\Big ( \sup _{t \in [-b_{n}^{-1},b_{n}^{-1}]} |\hat{\psi }(t)
- \psi (t)| \geq c b_{n}^{ \frac{1}{2} - \varepsilon } \Big )
\\
\leq{}
&
\mathbb{P}\Big ( \sup _{t \in [-b_{n}^{-1},b_{n}^{-1}]} n^{-1}|S
_{W}(\xi ^{(1)}(t))| \geq c b_{n}^{ \frac{1}{2} - \varepsilon } \Big )
\\
& + \mathbb{P}\Big ( \sup _{t \in [-b_{n}^{-1},b_{n}^{-1}]} n^{-1}|S
_{W}(\xi ^{(2)}(t))| \geq c b_{n}^{ \frac{1}{2} - \varepsilon } \Big )
\\
\leq{}
& 2 \tilde{C} c^{-\frac{1}{2}} b_{n}^{\frac{2\varepsilon -3}{4}}
\exp \left \lbrace -
\frac{c^{2} n b_{n}^{1-2\varepsilon }}{160(m+1)^{d}} \right \rbrace\vadjust{\goodbreak}
\end{align*}
\endgroup
for all $n \geq n_{0}$ and some constant $\tilde{C} > 0$. Hence,
\begin{equation*}
\begin{split}
& \mathbb{P}\Big ( \sup _{t \in [-b_{n}^{-1},b_{n}^{-1}]} |\hat{\psi }(t)
- \psi (t)| > c b_{n}^{ \tilde{ \delta }/2 } \Big )
\\
\leq{}
& \check{C} n^{\frac{3-2\varepsilon }{4(1-2\varepsilon )} - \frac{c
^{2}(\log n)^{\eta (1-2\varepsilon )}}{160(m+1)^{d}}}
\Big ( \log n
\Big )^{-\frac{3-2\varepsilon }{4(1-2\varepsilon )} (1+\eta (1-2\varepsilon
))} \to 0,
\\
\end{split}
\end{equation*}
as $n \to \infty $, where $\check{C} = 2 \tilde{C} c^{-\frac{1}{2}}$.
\end{proof}

\begin{cor}
\label{cor:kappa_n}
Let $\gamma = 0$ and suppose that the assumptions of
Theorem~\ref{theo:clt_univariate} are satisfied. Moreover, let
$\kappa _{n} = 2 \Big ( \log n \Big )^{
\frac{1+\eta (1+2\varepsilon )}{2}}$. Then,
\begin{equation*}
\lim \limits _{n \to \infty } \mathbb{P}\Big ( \frac{|\psi (t)|}{|
\tilde{\psi }(t)|} \geq \kappa _{n} \ \text{for some} \ t \in [-b_{n}
^{-1}, b_{n}^{-1}] \Big ) = 0.
\end{equation*}
\end{cor}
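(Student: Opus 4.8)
The plan is to show that, for all large $n$, the event in the statement is contained in the event $\{\sup_{t\in I_n}|\hat\psi(t)-\psi(t)|\ge c\,b_n^{1/2-\varepsilon}\}$ for a suitable constant $c>0$, where $I_n:=[-b_n^{-1},b_n^{-1}]$; the latter probability vanishes by Corollary~\ref{cor:sup_bn}. First I would use that $\hat\psi(t)=\int_{\mathbb{R}}e^{itx}\,P_n(dx)$ is the characteristic function of a probability measure, hence $|\hat\psi(t)|\le 1$ for every $t$. Therefore, whenever $|\hat\psi(t)|>n^{-1/2}$ we have $\tilde\psi(t)=\hat\psi(t)^{-1}$ and $|\psi(t)|/|\tilde\psi(t)|=|\psi(t)|\,|\hat\psi(t)|\le 1$. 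Since $\kappa_n=2(\log n)^{(1+\eta(1+2\varepsilon))/2}>1$ for all $n\ge 3$, taking contrapositives yields, for such $n$,
\begin{equation*}
\Bigl\{\tfrac{|\psi(t)|}{|\tilde\psi(t)|}\ge\kappa_n\ \text{for some}\ t\in I_n\Bigr\}\ \subseteq\ \Bigl\{|\hat\psi(t)|\le n^{-1/2}\ \text{for some}\ t\in I_n\Bigr\}.
\end{equation*}

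Next I would bound $|\psi|$ from below on $I_n$. By Lemma~\ref{lem:important_lemma}, (2), there is $C_\psi>0$ with $|\psi(x)|^{-1}\le C_\psi(1+|x|)^{1/2-\varepsilon}$ for all $x$; hence, for $n$ large enough that $b_n^{-1}\ge 1$, one has $\inf_{t\in I_n}|\psi(t)|\ge c_1\,b_n^{1/2-\varepsilon}$ with $c_1:=(C_\psi\,2^{1/2-\varepsilon})^{-1}$. With the prescribed bandwidth $b_n\approx n^{-1/(1-2\varepsilon)}(\log n)^{\eta+1/(1-2\varepsilon)}$ one computes $b_n^{1/2-\varepsilon}\approx n^{-1/2}(\log n)^{(\eta(1-2\varepsilon)+1)/2}$, so that $n^{-1/2}=o(b_n^{1/2-\varepsilon})$ and, for all large $n$, $c_1\,b_n^{1/2-\varepsilon}-n^{-1/2}\ge\tfrac{c_1}{2}b_n^{1/2-\varepsilon}$. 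Consequently, if $|\hat\psi(t_0)|\le n^{-1/2}$ for some $t_0\in I_n$, then $|\hat\psi(t_0)-\psi(t_0)|\ge|\psi(t_0)|-|\hat\psi(t_0)|\ge\tfrac{c_1}{2}b_n^{1/2-\varepsilon}$, so the right-hand event above is contained in $\{\sup_{t\in I_n}|\hat\psi(t)-\psi(t)|\ge\tfrac{c_1}{2}b_n^{1/2-\varepsilon}\}$. Invoking Corollary~\ref{cor:sup_bn} with $c=c_1/2$ then completes the argument.

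The proof is short, and the only step deserving any care is the elementary comparison $n^{-1/2}=o(b_n^{1/2-\varepsilon})$, which is precisely what renders the thresholding level $n^{-1/2}$ in the definition of $\tilde\psi$ harmless on $I_n$. I note that the exact form of $\kappa_n$ is immaterial for this corollary — any sequence that is eventually bounded away from $1$ would serve equally well — but the stated $\kappa_n$ is the one needed in the subsequent estimates for the remainder term $E_2$.
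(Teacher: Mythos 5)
There is a genuine gap, and it stems from a misreading of the notation $\tilde{\psi }$. The displayed formula defining $\tilde{\psi }$ is a typo for its reciprocal: what is meant (and what is written explicitly for $\tilde{\psi }_{\ast }$ in Section~\ref{sec:negelcting_the_drift}, and what is needed for $\hat{\theta }/\tilde{\psi }$ to be an estimator of $\theta /\psi $ and for the algebraic identity for $R_{n}$ used in the proof of Theorem~\ref{theo:remainder_negligible_theorem}) is $\frac{1}{\tilde{\psi }(t)} = \frac{1}{\hat{\psi }(t)}\,\mathbf{1}\{|\hat{\psi }(t)|>n^{-1/2}\}$. Consequently $\frac{|\psi (t)|}{|\tilde{\psi }(t)|}$ means $\frac{|\psi (t)|}{|\hat{\psi }(t)|}\,\mathbf{1}\{|\hat{\psi }(t)|>n^{-1/2}\}$, not $|\psi (t)||\hat{\psi }(t)|$. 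Your key step --- ``whenever $|\hat{\psi }(t)|>n^{-1/2}$ the ratio is $|\psi (t)||\hat{\psi }(t)|\le 1$, hence the event is contained in $\{|\hat{\psi }(t)|\le n^{-1/2}\ \text{for some}\ t\}$'' --- is therefore false under the intended meaning: on the non-truncated region the ratio $|\psi (t)|/|\hat{\psi }(t)|$ can be of order $|\psi(t)|\sqrt{n}$, and controlling it there is precisely the content of the corollary (it is what makes the factor $\psi /\tilde{\psi }$ in the bound for $E_{2}$ harmless). Your reading trivializes the statement, and the set inclusion you assert does not hold. What you do prove correctly is the truncation part: using Lemma~\ref{lem:important_lemma}, (2), to get $\inf _{t\in I_{n}}|\psi (t)|\gtrsim b_{n}^{1/2-\varepsilon }$ and $n^{-1/2}=o(b_{n}^{1/2-\varepsilon })$, and then Corollary~\ref{cor:sup_bn}, which is exactly the first half of the paper's proof (the events $A_{n}$, cf.~\eqref{eq:lowerphi1}).

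The missing half, as the paper does it, is: on the event $\inf _{t\in I_{n}}|\hat{\psi }(t)|\ge n^{-1/2}$, the inequality $\frac{|\psi (t)|}{|\hat{\psi }(t)|}\ge \kappa _{n}$ forces $\frac{|\psi (t)-\hat{\psi }(t)|}{|\hat{\psi }(t)|}\ge \kappa _{n}-1$, hence $\sup _{t\in I_{n}}|\psi (t)-\hat{\psi }(t)|\ge (\kappa _{n}-1)n^{-1/2}\ge b_{n}^{1/2-\varepsilon }$ for large $n$, and Corollary~\ref{cor:sup_bn} applies; this is where the specific choice of $\kappa _{n}$ enters, since $\kappa _{n}n^{-1/2}=2b_{n}^{1/2-\varepsilon }$ (with the exponent $\frac{1+\eta (1-2\varepsilon )}{2}$ as in the paper's proof; the $1+2\varepsilon$ in the statement is another typo). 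Your closing remark that any $\kappa_n$ bounded away from $1$ would do can in fact be salvaged under the correct reading, but by a different argument than yours: $\frac{|\psi (t)|}{|\hat{\psi }(t)|}\ge \kappa $ with $\kappa>1$ gives $|\psi (t)-\hat{\psi }(t)|\ge (1-\kappa ^{-1})|\psi (t)|\gtrsim b_{n}^{1/2-\varepsilon }$ on $I_{n}$, again by the lower bound on $|\psi |$; as written, however, your proof neither states nor uses this, so the main case remains unproved.
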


\begin{proof}
By Lemma~\ref{lem:important_lemma}, (2), $\frac{1}{|\psi (x)|} \leq c
(1+|x|)^{\frac{1}{2} - \varepsilon }$ for some constant $c > 0$; hence,
there exists $n_{0} \in \mathbb{N}$ such that
%
\begin{equation}
\label{eq:lowerphi1}
\inf _{ t \in [-b_{n}^{-1}, b_{n}^{-1}] } |\psi (t)| \geq c^{-1} (1+|b
_{n}^{-1}|)^{\varepsilon - \frac{1}{2}} \geq c^{-1} b_{n}^{
\frac{1}{2} - \varepsilon }
\end{equation}
for all $n \geq n_{0}$. We first show that the probabilities of the events
\begin{equation*}
A_{n} := \Big \{ |\hat{\psi }(t)| < n^{-1/2} \ \text{for some} \ t
\in [-b_{n}^{-1}, b_{n}^{-1}] \Big \}
\end{equation*}
tend to $0$ as $n \to \infty $: By~\eqref{eq:psi_bigger_n_half},
$t \in [-b_{n}^{-1}, b_{n}^{-1}]$ implies $|\psi (t)| > 2n^{-1/2}$, for
all $n \geq n_{1}$ and some $n_{1} \in \mathbb{N}$. Set $\tilde{n} =
\max \{ n_{0}, \ n_{1} \}$. Then
\begin{equation*}
\begin{split}
\mathbb{P}(A_{n}) \leq{}
& \mathbb{P}\Big ( |\hat{\psi }(t) - \psi (t)| >
|\psi (t)| - n^{-1/2} \ \text{for some} \ t \in [-b_{n}^{-1}, b_{n}
^{-1}] \Big )
\\
\leq{}
& \mathbb{P}\Big ( |\hat{\psi }(t) - \psi (t)| > \frac{1}{2}|
\psi (t)| \ \text{for some} \ t \in [-b_{n}^{-1}, b_{n}^{-1}] \Big )
\\
\leq{}
& \mathbb{P}\Big ( |\hat{\psi }(t) - \psi (t)| > \frac{1}{2c} b
_{n}^{\frac{1}{2} - \varepsilon } \ \text{for some} \ t \in [-b_{n}
^{-1}, b_{n}^{-1}] \Big ),
\end{split}
\end{equation*}
for all $n \geq \tilde{n}$, where the last inequality follows
from~\eqref{eq:lowerphi1}. Hence, by Corollary~\ref{cor:sup_bn},
$\lim _{n \to \infty } \mathbb{P}(A_{n}) = 0$.

Suppose $\kappa _{n} = 2 \Big ( \log n \Big )^{\frac{1+\eta (1-2\varepsilon
)}{2}}$. Then we find that
\begin{align*}
& \mathbb{P}\Big ( \sup _{t \in [-b_{n}^{-1}, b_{n}^{-1}]} \frac{|
\psi (t)|}{|\tilde{\psi }(t)|} \geq \kappa _{n} \Big )
\\
={}
& \mathbb{P}\Big ( \sup _{t \in [-b_{n}^{-1}, b_{n}^{-1}]} \frac{|
\psi (t)|}{|\hat{\psi }(t)|} \geq \kappa _{n}, \ \inf _{t \in [-b_{n}^{-1}, b_{n}^{-1}]} |\hat{\psi }(t)| \geq n^{-1/2}
\Big )
\\
& + \mathbb{P}\Big ( \sup _{t \in [-b_{n}^{-1}, b_{n}^{-1}]} \frac{|
\psi (t)|}{|\tilde{\psi }(t)|} \geq \kappa _{n}, \ \inf _{t \in [-b_{n}^{-1}, b_{n}^{-1}]} |\hat{\psi }(t)| < n^{-1/2}
\Big )
\\
\leq{}
& \mathbb{P}\Big ( \sup _{t \in [-b_{n}^{-1}, b_{n}^{-1}]} \frac{|
\psi (t) - \hat{\psi }(t)|}{|\hat{\psi }(t)|} \geq \kappa _{n} - 1, \ \inf _{t \in [-b_{n}^{-1}, b_{n}^{-1}]} |\hat{\psi }(t)| \geq n^{-1/2}
\Big )
\\
& + \mathbb{P}\Big ( \inf _{t \in [-b_{n}^{-1}, b_{n}^{-1}]} |
\hat{\psi }(t)| < n^{-1/2} \Big )
\\
\leq{}
& \mathbb{P}\Big ( \sup _{t \in [-b_{n}^{-1}, b_{n}^{-1}]} |
\psi (t) - \hat{\psi }(t)| \geq (\kappa _{n} - 1) n^{-1/2} \Big )
+
\mathbb{P}(A_{n}),
\end{align*}
for all $n \geq \tilde{n}$. Taking into account that for large $n$,
$(\kappa _{n} - 1) n^{-1/2} = 2b_{n}^{\frac{1}{2} - \varepsilon } - n
^{-1/2} \geq b_{n}^{\frac{1}{2} - \varepsilon }$, the assertion follows
by Corollary~\ref{cor:sup_bn}.
\end{proof}

Now we can give the proof for
Theorem~\ref{theo:remainder_negligible_theorem}.

\begin{proof}[Proof of Theorem~\ref{theo:remainder_negligible_theorem}]
First of all, observe that
\begin{align*}
R_{n} + \theta \frac{\psi - \hat{\psi }}{\psi ^{2}} \Eins_{|
\hat{\psi }| \leq n^{-1/2}} ={}
& \Big ( 1- \frac{\hat{\psi }}{\psi }
\Big )
\Big ( \frac{\hat{\theta } - \theta }{\tilde{\psi }} + \theta \frac{
\psi - \hat{\psi }}{\psi \tilde{\psi }} \Big )
\\
={}
& \frac{\psi }{\tilde{\psi }} \Big ( \frac{\psi - \hat{\psi }}{
\psi } \Big ) \Big ( \frac{\hat{\theta } - \theta }{\psi } + \theta \frac{
\psi - \hat{\psi }}{\psi ^{2}} \Big )
\\
={}
& \frac{\psi }{\tilde{\psi }} \Big ( \frac{\psi - \hat{\psi }}{
\psi } \Big ) \Big ( \frac{\hat{\theta } - \theta }{\psi } +
i \Big ( \frac{1}{
\psi } \Big )^{\prime } \Big ( \psi - \hat{\psi } \Big ) \Big ).
\end{align*}
Now, fix $\tilde{\gamma } > 0$ and let $\kappa _{n} = 2 \Big ( \log n
\Big )^{\frac{1+\eta (1-2\varepsilon )}{2}}$. Moreover, let
\begin{equation*}
M_{n} = \max \Big \{ \sup _{t \in [-b_{n}^{-1}, b_{n}^{-1}]} |
\hat{\psi }(t) - \psi (t)|, \ \sup _{t \in [-b_{n}^{-1}, b_{n}^{-1}]} |
\hat{\theta }(t) - \theta (t)| \Big \}.
\end{equation*}
By \textbf{(K3)}, $\sup _{x \in \mathbb{R}, \ n \in \mathbb{N}}|
\mathcal{F}_{+}[K_{b_{n}}](x)| \leq 2$; hence
\begin{equation*}
\begin{split}
& \mathbb{P}\Big ( E_{2} \geq \tilde{\gamma } \Big )
\\
={}
& \mathbb{P}\Big ( \sqrt{n} \Big < \mathcal{F}_{+}[\mathcal{G}^{-1
\ast }v], \Big \{ R_{n} + \theta \frac{\psi - \hat{\psi }}{\psi ^{2}}
\Eins_{\{|\hat{\psi }| \leq n^{-1/2}\}} \Big \} \mathcal{F}_{+}[K
_{b}] \Big >_{L^{2}(\mathbb{R}^{\times })} \geq \tilde{\gamma } \Big )
\\
\leq{}
& \mathbb{P}\Big ( \int _{-b_{n}^{-1}}^{b_{n}^{-1}} |\mathcal{F}
_{+}[\mathcal{G}^{-1 \ast }v](x)|
\frac{|\psi (x)|}{|\tilde{\psi }(x)|} \frac{|\hat{\psi }(x) - \psi (x)|}{|
\psi (x)|}
\\
& \times \Big ( \frac{|\hat{\theta }(x) - \theta (x)|}{|\psi (x)|} +
\Big | \Big ( \frac{1}{\psi } \Big )^{\prime }(x) \Big |
|\psi (x) -
\hat{\psi }(x)| \Big ) d x \geq \frac{\tilde{\gamma }}{2} n^{-
\frac{1}{2}} \Big )
\\
\leq{}
& \mathbb{P}\Big ( M_{n} b_{n}^{\frac{1}{2} - \varepsilon }
\int _{-b_{n}^{-1}}^{b_{n}^{-1}} \frac{|\mathcal{F}_{+}[\mathcal{G}
^{-1 \ast }v](x)|}{|\psi (x)|}
\Big ( \frac{1}{|\psi (x)|} + \Big |
\Big ( \frac{1}{\psi } \Big )^{\prime }(x) \Big | \Big ) d x
\geq \frac{
\tilde{\gamma }}{2} n^{-\frac{1}{2}} \kappa _{n}^{-1} \Big )
\\
& + \mathbb{P}\Big ( \sup _{t \in [-b_{n}^{-1}, b_{n}^{-1}]} \frac{|
\psi (t)|}{|\tilde{\psi }(t)|} > \kappa _{n} \Big )
+ \mathbb{P}\Big (
\sup _{t \in [-b_{n}^{-1}, b_{n}^{-1}]} |\psi (t) - \hat{\psi }(t)| > b
_{n}^{\frac{1}{2} - \varepsilon } \Big ).
\end{split}
\end{equation*}
Since by Lemma~\ref{lem:important_lemma}, (3), $\Big ( \frac{1}{\psi }
\Big )^{\prime } \in L^{\infty }(\mathbb{R})$, there is a constant
$\tilde{c} > 0$ such that
\begin{equation*}
\begin{split}
& \mathbb{P}\Big ( M_{n} b_{n}^{\frac{1}{2} - \varepsilon }
\int _{-b_{n}^{-1}}^{b_{n}^{-1}} \frac{|\mathcal{F}_{+}[\mathcal{G}
^{-1 \ast }v](x)|}{|\psi (x)|}
\Big ( \frac{1}{|\psi (x)|} + \Big |
\Big ( \frac{1}{\psi } \Big )^{\prime }(x) \Big | \Big ) dx
\geq \frac{
\tilde{\gamma }}{2} n^{-\frac{1}{2}} \kappa _{n}^{-1} \Big )
\\
\leq{}
& \mathbb{P}\Big ( M_{n} \int _{-b_{n}^{-1}}^{b_{n}^{-1}} \frac{|
\mathcal{F}_{+}[\mathcal{G}^{-1 \ast }v](x)|}{|\psi (x)|^{2}} dx
\geq \frac{\tilde{\gamma }}{2 \tilde{c}} n^{-\frac{1}{2}} \kappa _{n}
^{-1} b_{n}^{\varepsilon - \frac{1}{2}} \Big ).
\end{split}
\end{equation*}
Moreover, by Definition~\ref{def:admissible_function}, (iii), we have
for some $\check{c} > 0$,
\begin{equation*}
\begin{split}
& \mathbb{P}\Big ( M_{n} \int _{-b_{n}^{-1}}^{b_{n}^{-1}} \frac{|
\mathcal{F}_{+}[\mathcal{G}^{-1 \ast }v](x)|}{|\psi (x)|^{2}} dx
\geq \frac{\tilde{\gamma }}{2 \tilde{c}} n^{-\frac{1}{2}} \kappa _{n}
^{-1} b_{n}^{\varepsilon - \frac{1}{2}} \Big )
\\
\leq{}
& \mathbb{P}\Big ( M_{n} \int _{-b_{n}^{-1}}^{b_{n}^{-1}} \frac{(1+x
^{2})^{-1+\varepsilon }}{|\psi (x)|^{2}} (1+x^{2})^{1-\varepsilon -
\xi /2} dx
\geq \frac{\tilde{\gamma }}{2 \tilde{c} \check{c}} n^{-
\frac{1}{2}} \kappa _{n}^{-1} b_{n}^{\varepsilon - \frac{1}{2}} \Big )
\\
\leq{}
& \mathbb{P}\Big ( M_{n} \geq \frac{\tilde{\gamma }}{2 \tilde{c}
\check{c}} n^{-\frac{1}{2}} \kappa _{n}^{-1} b_{n}^{\frac{3}{2} -
\varepsilon - \xi }
\Big \| \frac{(1+\cdot ^{2})^{-
\frac{\varepsilon -1}{2}}}{\psi } \Big \|_{L^{2}(\mathbb{R})}^{-1}
\Big ),
\end{split}
\end{equation*}
where the last line follows because $\frac{(1+\, \cdot \,^{2})^{-(
\varepsilon -1)/2}}{\psi } \in L^{2}(\mathbb{R})$ by
Proposition~\ref{lem:properties_transferred}, (c). Now, for $n$
sufficiently large, we obtain
\begin{equation*}
\begin{split}
n^{-\frac{1}{2}} \kappa _{n}^{-1} b_{n}^{\frac{3}{2} - \varepsilon -
\xi } ={}
& \frac{1}{2} n^{ \frac{\xi - 1}{1-2\varepsilon } - 1 }
\big ( \log n \big )^{ (1-\xi ) \big ( \eta + \frac{1}{1-2\varepsilon }
\big ) }
\\
>{}
& n^{-\frac{1+\tau }{2(2+\tau )}}
\Big [ \log \Big ( b_{n}^{-
\frac{1}{2}} n^{\frac{1+\tau }{4(2+\tau )}} \Big ) \Big ]^{\eta +
\frac{1}{2}}.
\end{split}
\end{equation*}
Indeed, by Definition~\ref{def:admissible_function}, (iii), we have
\begin{equation*}
\frac{\xi - 1}{1-2\varepsilon } - 1 > \frac{1}{1-2\varepsilon } - \frac{1+
\tau }{2(2+\tau )} > - \frac{1+\tau }{2(2+\tau )}.
\end{equation*}
Hence, we conclude by Theorem~\ref{theo:conv_rate},
Corollary~\ref{cor:sup_bn} and Corollary~\ref{cor:kappa_n} that
\begin{equation*}
\mathbb{P}\Big ( E_{2} \geq \tilde{\gamma } \Big ) \to 0, \quad
\text{as} \ n \to \infty ,
\end{equation*}
for any $\tilde{\gamma } > 0$.
\end{proof}

\subsection{Neglecting the drift $\gamma $}%
\label{sec:negelcting_the_drift}
It remains to show that the result of Theorem~\ref{theo:clt_univariate}
still holds true if $\gamma $ is assumed to be arbitrary. For this
purpose, consider the sample $(\tilde{Y}_{j})_{j \in W}$ given by
$\tilde{Y}_{j} = Y_{j} - \gamma $, $j \in W$. Moreover, let
$\psi _{\ast }(t) = \mathbb{E}[ e^{i t \tilde{Y}_{0}} ]$ be the
characteristic function\index{characteristic function} of $\tilde{Y}_{0}$ and write $\hat{\psi }_{
\ast }$ for its empirical counterpart, i.e. $\hat{\psi }_{\ast }(t) =
\frac{1}{n} \sum _{j\in W} e^{i t \tilde{Y}_{j}}$. Then, with the
notation
\begin{equation*}
\frac{1}{\tilde{\psi }_{\ast }(t)} :=
\frac{1}{\hat{\psi }_{\ast }(t)}\Eins_{\{|\hat{\psi }_{\ast }(t)|
> n^{-1/2}\}} =
e^{it\gamma } \frac{1}{\tilde{\psi }(t)},
\end{equation*}
we have for any $t \in \mathbb{R}$,
%
\begin{equation}
\label{eq:discard_gamma}
\begin{split}
\frac{\hat{\theta }_{\ast }(t)}{\tilde{\psi }_{\ast }(t)} = \frac{
\hat{\theta }(t)}{\tilde{\psi }(t)} - \gamma \Eins_{\{|
\hat{\psi }_{\ast }(t)| > n^{-1/2}\}}
\quad \text{as well as} \quad \frac{
\theta _{\ast }(t)}{\psi _{\ast }(t)} = \frac{\theta (t)}{\psi (t)} -
\gamma ,
\end{split}
\end{equation}
where $\theta _{\ast }(t) = \mathbb{E}[ \tilde{Y}_{0} e^{i t \tilde{Y}
_{0}} ]$ and $\hat{\theta }_{\ast }(t) = \frac{1}{n} \sum _{j\in W}
\tilde{Y}_{j} e^{i t \tilde{Y}_{j}}$. For any $v \in \mathcal{U}(
\xi ,\beta _{2})$, consider the decomposition
\begin{equation*}
\begin{split}
\sqrt{n} (\hat{L}_{W} v - \mathcal{L}v) ={}
& \frac{\sqrt{n}}{2
\pi } \Big < \mathcal{F}_{+}[\mathcal{G}^{-1 \ast }v], \frac{
\hat{\theta }}{\tilde{\psi }} \mathcal{F}_{+}[K_{b_{n}}] - \frac{
\theta }{\psi } \Big >_{L^{2}(\mathbb{R})}
\\
={}
&\frac{\sqrt{n}}{2\pi } \Big < \mathcal{F}_{+}[\mathcal{G}^{-1
\ast }v], \frac{\hat{\theta }_{\ast }}{\tilde{\psi }_{\ast }}
\mathcal{F}_{+}[K_{b_{n}}] - \frac{\theta _{\ast }}{\psi _{\ast }}
\Big >_{L^{2}(\mathbb{R})}
\\
& + \frac{\sqrt{n}}{2\pi } \Big < \mathcal{F}_{+}[\mathcal{G}^{-1
\ast }v], \gamma \Eins_{\{|\hat{\psi }_{\ast }| > n^{-1/2}\}}
\mathcal{F}_{+}[K_{b_{n}}] - \gamma \Big >_{L^{2}(\mathbb{R})}.
\end{split}
\end{equation*}
As $W$ is regularly growing\index{regularly growing} to infinity, the first summand on the
right-hand side of the last equation tends to a Gaussian random
variable\index{Gaussian random variable} since $\psi _{\ast }$ is an infinitely divisible characteristic
function\index{infinitely divisible characteristic functions} without drift component. For the second summand, we find that
\begin{equation*}
\begin{split}
& \frac{\sqrt{n}}{2\pi } \Big < \mathcal{F}_{+}[\mathcal{G}^{-1
\ast }v], \gamma \Eins_{\{|\hat{\psi }_{\ast }| > n^{-1/2}\}}
\mathcal{F}_{+}[K_{b_{n}}] - \gamma \Big >_{L^{2}(\mathbb{R})}
\\
={}
& \frac{\sqrt{n} \gamma }{2\pi } \Big < \mathcal{F}_{+}[
\mathcal{G}^{-1 \ast }v], \mathcal{F}_{+}[K_{b_{n}}] - 1 \Big >_{L^{2}(
\mathbb{R})}
\\
& - \frac{\sqrt{n} \gamma }{2\pi } \Big < \mathcal{F}_{+}[
\mathcal{G}^{-1 \ast }v], \Eins_{\{|\hat{\psi }_{\ast }| \leq n
^{-1/2}\}} \mathcal{F}_{+}[K_{b_{n}}] \Big >_{L^{2}(\mathbb{R})}.
\end{split}
\end{equation*}
Hence, by \textbf{(K3)} and Definition~\ref{def:admissible_function},
(iii), we obtain
\begin{equation*}
\begin{split}
& \sqrt{n} \mathbb{E}\Big | \Big < \mathcal{F}_{+}[\mathcal{G}^{-1
\ast }v], \mathcal{F}_{+}[K_{b_{n}}] - 1 \Big >_{L^{2}(\mathbb{R})}
\Big |
\\
\leq{}
& \int _{\mathbb{R}} \sqrt{n} |\mathcal{F}_{+}[\mathcal{G}^{-1
\ast }v](x)| |1 - \mathcal{F}_{+}[K_{b_{n}}](x)| dx
\lesssim b_{n}
\sqrt{n} \|\mathcal{G}^{-1 \ast }v\|_{H^{1}(\mathbb{R})},
\end{split}
\end{equation*}
where the last term tends to zero as $n \to \infty $, since
$b_{n} = o(n^{-1/2})$. Moreover,
\begin{equation*}
\begin{split}
& \sqrt{n} \mathbb{E}\Big |\Big < \mathcal{F}_{+}[\mathcal{G}^{-1
\ast }v], \Eins_{\{|\hat{\psi }_{\ast }| \leq n^{-1/2}\}}
\mathcal{F}_{+}[K_{b_{n}}] \Big >_{L^{2}(\mathbb{R})} \Big |
\\
\leq{}
& 2 \sqrt{n} \int _{-b_{n}^{-1}}^{b_{n}^{-1}} |\mathcal{F}_{+}[
\mathcal{G}^{-1 \ast }v](x)| \mathbb{P}\Big ( |\hat{\psi }_{\ast }(x)|
\leq n^{-\frac{1}{2}} \Big ) dx.
\end{split}
\end{equation*}
Taking into account that $|\psi (x)| = |\hat{\psi }_{\ast }(x)|$,
relation~\eqref{eq:propb_estimation} with $p=1/2$ yields
\begin{equation*}
\sqrt{n} |\mathcal{F}_{+}[\mathcal{G}^{-1 \ast }v](x)| \mathbb{P}
\Big ( |\hat{\psi }_{\ast }(x)|\leq n^{-\frac{1}{2}} \Big ) \Eins
_{ [-b_{n}^{-1}, b_{n}^{-1}] }(x)
\leq \frac{|\mathcal{F}_{+}[
\mathcal{G}^{-1 \ast }v](x)|}{|\psi (x)|}.
\end{equation*}
Applying again~\eqref{eq:propb_estimation} with $p=1$ implies
\begin{equation*}
\sqrt{n} |\mathcal{F}_{+}[\mathcal{G}^{-1 \ast }v](x)| \mathbb{P}
\Big ( |\hat{\psi }_{\ast }(x)|\leq n^{-\frac{1}{2}} \Big ) \Eins
_{ [-b_{n}^{-1}, b_{n}^{-1}] }(x)
\leq n^{-\frac{1}{2}} \frac{|
\mathcal{F}_{+}[\mathcal{G}^{-1 \ast }v](x)|}{|\psi (x)|^{2}} \to 0,
\end{equation*}
as $n \to \infty $; thus, by dominated convergence, we have
\begin{equation*}
\sqrt{n} \mathbb{E}\Big |\Big < \mathcal{F}_{+}[\mathcal{G}^{-1
\ast }v], \Eins_{\{|\hat{\psi }_{\ast }| \leq n^{-1/2}\}}
\mathcal{F}_{+}[K_{b_{n}}] \Big >_{L^{2}(\mathbb{R})} \Big |
\to 0,
\quad \text{as} \ n \to \infty .
\end{equation*}
All in all, this shows that Theorem~\ref{theo:clt_univariate} holds for
any fixed $\gamma \in \mathbb{R}$.


\begin{appendix}
\section{Appendix}%
\label{sec:Appendix}

\subsection{Proof of Lemma~\ref{lem:properties_transferred}}
%
%
\begin{enumerate}[(a)]%
\item
Minkowski's integral inequality together with
formula~\eqref{eq:levy-characterisitic-of-field} yields
\begin{equation*}
\| uv_{1} \|_{L^{k}(\mathbb{R})} \leq \| uv_{0} \|_{L^{k}(\mathbb{R})}
\int _{\supp(f)} |f(s)|^{1/k} ds, \quad k=1,2.
\end{equation*}
The right-hand side in the last inequality is finite by
Assumption~\ref{ass:basic_assumptions}, (1) and (2); hence $uv_{1}
\in L^{1}(\mathbb{R}) \cap L^{2}(\mathbb{R})$. In particular,
$\mathbb{R}\ni x \mapsto \mathcal{F}_{+}[uv_{1}](x) =\break  \int _{
\mathbb{R}} e^{itx} (uv_{1})(t) dt$ is well-defined. Using again formula
\eqref{eq:levy-characterisitic-of-field} together with Fubini's theorem
and a simple integral substitution yields~\eqref{eq:Fouriertrafo_uv_1}.
\item
The triangle inequality followed by a simple integral substitution shows
that
\begin{equation*}
\int _{\mathbb{R}} |x|^{1+\tau } |(uv_{1})|(x) dx \leq \|f\|_{ L^{2+
\tau }(\mathbb{R}) } \int _{\mathbb{R}} |x|^{1+\tau } |(uv_{0})(x)| dx
< \infty .
\end{equation*}%
\item
The proof of Theorem 3.10 in~\cite{GlRothSpo017} yields that
$|\psi (x)|$ coincides with the inverse of the right-hand side
in~\eqref{eq:abs_psi_condition}. This shows part (c).\qed
\end{enumerate}
%

\subsection{Proof of Lemma~\ref{theo:upper_bound_estimation_error}}
%
Let $v \in \textup{Image}(\mathcal{G})$ be \xch{such}{sucht} that $\int _{\mathbb{R}}
\frac{|\mathcal{F}_{+}[\mathcal{G}^{-1 \ast }v]|(x)}{|\psi (x)|} dx <
\infty $. In order to prove the upper bound in
Theorem~\ref{theo:upper_bound_estimation_error}, decompose $
\mathbb{E}|\hat{\mathcal{L}}_{W} v - \mathcal{L}v|$ as follows:
\begin{equation*}
\begin{split}
\mathbb{E}|\hat{\mathcal{L}}_{W} v - \mathcal{L}v| \leq \underbrace{
\mathbb{E}\big | \left \langle \big ( \mathcal{G}_{n}^{-1 \ast } -
\mathcal{G}^{-1 \ast } \big ) v, \widehat{uv_{1}} \right \rangle _{L
^{2}(\mathbb{R})} \big |}_{\textbf{(I)}}
+ \mathbb{E}\underbrace{
\big | \left \langle \mathcal{G}^{-1 \ast } v, \widehat{uv_{1}} - uv
_{1} \right \rangle _{L^{2}(\mathbb{R})} \big |}_{\textbf{(II)}}.
\end{split}
\end{equation*}
We estimate parts (I) and (II) seperately. Using the isometry property\index{isometry property}
of $\mathcal{F}_{+}$, we obtain
\begin{equation*}
\begin{split}
\textbf{(I)}
\leq \frac{1}{2\pi } \int _{\mathbb{R}} | \mathcal{F}_{+}
[ (\mathcal{G}_{n}^{-1 \ast } - \mathcal{G}^{-1 \ast }) v ](x) |
\mathbb{E}| \mathcal{F}_{+}[\widehat{uv_{1}}](x) | dx.
\end{split}
\end{equation*}
Furthermore, since $\widehat{u v_{1}} = \mathcal{F}_{+}^{-1}[\frac{
\hat{\theta }(x)}{\tilde{\psi }(x)} \mathcal{F}_{+}[K_{b}]]$,
stationarity of $Y$ yields for any $x \in \mathbb{R}$,
\begin{align*}
\mathbb{E}| \mathcal{F}_{+}[\widehat{uv_{1}}](x) |
&= | \mathcal{F}
_{+}[K_{b}](x) | \mathbb{E}\left | \frac{\sum _{t \in W} Y_{t}}{n
\hat{\psi }(x)} \Eins_{\{ |\hat{\psi }(x)| > n^{-1/2} \}} \right |\\
&\leq n^{1/2} | \mathcal{F}_{+}[K_{b}](x) | \mathbb{E}|Y_{0}|.
\end{align*}
Hence, by the \xch{Cauchy--Schwarz}{Cauchy-Schwart} inequality we obtain that
\begin{equation*}
\begin{split}
\textbf{(I)}
& \leq \frac{n^{1/2} \mathbb{E}|Y_{0}|}{2\pi } \big \|
\mathcal{F}_{+} [ (\mathcal{G}_{n}^{-1 \ast } - \mathcal{G}^{-1
\ast }) v ] \big \|_{L^{2}(\mathbb{R})}
\| \mathcal{F}_{+}[K_{b}] \|
_{L^{2}(\mathbb{R})}
\\
& \leq \frac{S \mathbb{E}|Y_{0}|}{\sqrt{\pi }} \Big ( \frac{n}{b}
\Big )^{1/2} \big \| (\mathcal{G}_{n}^{-1 \ast } - \mathcal{G}^{-1
\ast }) v \big \|_{L^{2}(\mathbb{R})},
\end{split}
\end{equation*}
where the last line follows from \textbf{(K2)} and again by applying the
isometry property\index{isometry property} of $\mathcal{F}_{+}$. For the second part, we find
that
\begin{equation*}
\begin{split}
\textbf{(II)} ={}
& \frac{1}{2 \pi } \int _{\mathbb{R}} | \mathcal{F}
_{+}[\mathcal{G}^{-1 \ast }v](x) | \mathbb{E}\left | \frac{
\hat{\theta }(x)}{\tilde{\psi }(x)} \mathcal{F}_{+}[K_{b}](x) -
\frac{
\theta (x)}{\psi (x)} \right | \:\mathrm{d}x
\\
\leq{}
& \frac{1}{2 \pi }\underbrace{\int _{\mathbb{R}} | \mathcal{F}
_{+}[\mathcal{G}^{-1 \ast }v](x) | \mathbb{E}\left | \frac{
\hat{\theta }(x)}{\tilde{\psi }(x)} - \frac{\theta (x)}{\psi (x)}
\right | |\mathcal{F}_{+}[K_{b}](x)| \:\mathrm{d}x}_{\textbf{(III)}}
\\
& + \frac{1}{2 \pi } \left \langle |\mathcal{F}_{+}[\mathcal{G}^{-1
\ast }v]|, |\mathcal{F}_{+}[uv_{1}]| |1 - \mathcal{F}_{+}[K_{b}]| \right \rangle _{L^{2}(\mathbb{R})},
\end{split}
\end{equation*}
where the identity $|\mathcal{F}_{+}[uv_{1}]|(x) = \left | \frac{
\theta (x)}{\psi (x)} \right |$ was used in the last line. Hence, it
remains to bound expression \textbf{(III)}. Indeed, applying the triangle
inequality followed by\break  the \xch{Cauchy--Schwarz}{Cauchy-Schwart} inequality and the bounds
in~\cite[Lemma 8.1 and 8.3]{KaRoSpoWalk18} yields
\begin{equation*}
\begin{split}
\textbf{(III)} \leq{}
& \int _{\mathbb{R}} |\mathcal{F}_{+}[K_{b}](x)| |
\mathcal{F}_{+}[\mathcal{G}^{-1 \ast }v](x)| \mathbb{E}|\hat{\theta }(x)-
\theta (x)| \Big | \frac{1}{\tilde{\psi }(x)} - \frac{1}{\psi (x)}
\Big | dx
\\
& + \int _{\mathbb{R}} |\mathcal{F}_{+}[K_{b}](x)| |\mathcal{F}_{+}[
\mathcal{G}^{-1 \ast }v](x)| |\theta (x)| \mathbb{E}\Big | \frac{1}{
\tilde{\psi }(x)} - \frac{1}{\psi (x)} \Big | dx
\\
& + \int _{\mathbb{R}} |\mathcal{F}_{+}[K_{b}](x)| |\mathcal{F}_{+}[
\mathcal{G}^{-1 \ast }v](x)| \frac{\mathbb{E}|\hat{\theta }(x) -
\theta (x)|}{|\psi (x)|} dx
\\
\leq{}
& S \left [ \int _{\mathbb{R}} |\mathcal{F}_{+}[\mathcal{G}^{-1
\ast }v](x)| \sqrt{\mathbb{E}|\hat{\theta }(x)-\theta (x)|^{2}} \sqrt{
\mathbb{E}\Big | \frac{1}{\tilde{\psi }(x)} - \frac{1}{\psi (x)}
\Big |^{2}} dx \right .
\\
& + \left . \int _{\mathbb{R}} \frac{|\mathcal{F}_{+}[\mathcal{G}^{-1
\ast }v](x)|}{|\psi (x)|} |\mathcal{F}_{+}[uv_{1}](x)| \sqrt{
\mathbb{E}\Big | \frac{1}{\tilde{\psi }(x)} - \frac{1}{\psi (x)}
\Big |^{2}} dx \right .
\\
& + \left . \int _{\mathbb{R}} \frac{|\mathcal{F}_{+}[\mathcal{G}^{-1
\ast }v](x)}{|\psi (x)|}
\sqrt{\mathbb{E}|\hat{\theta }(x)-\theta (x)|^{2}} dx \right ]
\\
\leq{}
& S \left [ \frac{c_{1}}{n^{1/2}}
\sqrt{\mathbb{E}|Y_{0}|^{2}} \int _{\mathbb{R}} \frac{|\mathcal{F}_{+}[
\mathcal{G}^{-1 \ast }v](x)|}{|\psi (x)|} dx \right .
\\
& + \left . \frac{c_{2}}{n^{1/2}} \int _{\mathbb{R}} \frac{|
\mathcal{F}_{+}[\mathcal{G}^{-1 \ast }v](x)|}{|\psi (x)|} |
\mathcal{F}_{+}[uv_{1}](x)| dx \right .
\\
& + \left . \frac{c_{3}}{n^{1/2}} \sqrt{\mathbb{E}|Y_{0}|^{2}}
\int _{\mathbb{R}} \frac{|\mathcal{F}_{+}[\mathcal{G}^{-1 \ast }v](x)}{|
\psi (x)|} dx \right ],
\end{split}
\end{equation*}
with constants $c_{1}$, $c_{2}$, $c_{3} > 0$. Hence, by integrability
of $uv_{1}$ it follows
\begin{equation*}
\textbf{(III)} \leq \frac{c \cdot S}{\sqrt{n}} \left ( \sqrt{
\mathbb{E}|Y_{0}|^{2}} + \|uv_{1}\|_{L^{1}(\mathbb{R}^{\times })}
\right ) \int _{\mathbb{R}^{\times }} \frac{|\mathcal{F}_{+}[
\mathcal{G}^{-1 \ast }v](x)|}{|\psi (x)|} dx
\end{equation*}
for some constant $c > 0$. This finishes the proof.\qed

\subsection{Proof of Theorem~\ref{cor:order_of_convergence}}
%
Using Assumption~\ref{ass:basic_assumptions}, (4), \textbf{(K3)} and
$\mathcal{F}_{+}[\mathcal{G}^{-1 \ast }] \in L^{1}(\mathbb{R})$ we find
that
\begin{equation*}
\begin{split}
\left \langle |\mathcal{F}_{+}[\mathcal{G}^{-1 \ast } v] |, |
\mathcal{F}_{+}[uv_{1}]| |1-\mathcal{F}_{+}[K_{b}]| \right \rangle _{L^{2}(\mathbb{R})}
\lesssim{}
& \min \{ 1, b_{n} \} \int _{\mathbb{R}}
|\mathcal{F}_{+}[\mathcal{G}^{-1 \ast } v] (x)| dx
\\
={}
& \mathcal{O}(b_{n}), \quad \text{as} \ n \to \infty .
\end{split}
\end{equation*}
Moreover, applying the same arguments as in the proof
of~\cite[Corollary 3.7]{GlRothSpo017}, we observe that
\begin{equation*}
\|\big ( \mathcal{G}_{n}^{-1 \ast } - \mathcal{G}^{-1 \ast } \big ) v\|
_{L^{2}(\mathbb{R})} \lesssim a_{n}^{\frac{\beta _{2}}{\beta _{1}} - 1}.
\end{equation*}
Hence, if $\gamma = 0$, the assertions of the theorem immediately follow
by the upper bound in Lemma~\ref{theo:upper_bound_estimation_error}.
Otherwise, if $\gamma \neq 0$, consider the sample $(\tilde{Y}_{j})_{j
\in W}$ defined in Section~\ref{sec:negelcting_the_drift}. Following the
computations there, one finds that on the right-hand side
of~\eqref{eq:upper_bound_uv_1} the additional term
\begin{equation*}
\frac{\gamma }{2\pi } \mathbb{E}\Big | \Big < \mathcal{F}_{+}[
\mathcal{G}^{-1 \ast }v], \mathcal{F}_{+}[K_{b_{n}}]
- 1 \Big >_{L^{2}(
\mathbb{R})} - \Big < \mathcal{F}_{+}[\mathcal{G}^{-1 \ast }v],
\Eins_{\{|\hat{\psi }_{\ast }| \leq n^{-1/2}\}} \mathcal{F}_{+}[K
_{b_{n}}] \Big >_{L^{2}(\mathbb{R})} \Big |
\end{equation*}
arises. Using $\mathcal{G}^{-1 \ast } v \in H^{1}(\mathbb{R})$,
$\mathcal{F}_{+}[\mathcal{G}^{-1 \ast }] \in L^{1}(\mathbb{R})$ and
\textbf{(K3)} yields that the latter expression can be estimated from
above by
\begin{equation*}
\frac{\gamma }{2\pi } \Big ( b_{n} \|\mathcal{G}^{-1 \ast } v\|_{H^{1}(
\mathbb{R})} + \frac{1}{\sqrt{n}} S \Big \| \frac{\mathcal{F}_{+}[
\mathcal{G}^{-1 \ast }v]}{\psi } \Big \|_{L^{1}(\mathbb{R})} \Big ).
\end{equation*}
This completes the proof.\qed

\subsection{Moment inequalities for $m$-dependent random fields}%
\label{subsec:moment_inequalities}
In this section, we sum up some moment inequalities that are quite
helpful for the proofs in Section~\ref{section:clt}.

We start with the following Bernstein type inequality that is due
to~\cite[p. 316]{Heinrich}.
%
\begin{theo}
\label{theo:bernstein}
Let $(X_{j})_{j \in V}$, $V \subset \mathbb{Z}^{d}$, be a centered
$m$-dependent random field satisfying $0 < \mathbb{E}X_{j}^{2} <
\infty $ and, for some $H > 0$,
%
\begin{equation}
\label{eq:bernstein_cond}
|\mathbb{E}X_{j}^{p}| \leq \frac{p!}{2} H^{p-2} \mathbb{E}X_{j}^{2},
\quad p \geq 3, \ j \in V.
\end{equation}
Then
\begin{equation*}
P(S_{V}(X) \geq x B_{V}) \leq
\begin{cases}
\exp \left ( -\frac{x^{2}}{4 (m+1)^{d} \rho _{V}} \right ),
& 0 \leq x
\leq \rho _{V} B_{V} / H,
\\
\exp \left ( -\frac{x B_{V}}{4 (m+1)^{d} H} \right ),
& x \geq \rho
_{V} B_{V} / H,
\end{cases}
\end{equation*}
where
\begin{equation*}
S_{V}(X) = \sum _{j \in V} X_{j}, \qquad B_{V}^{2} = \mathbb{E}S_{V}
^{2} \quad \text{and} \quad \rho _{V} = \sum _{j \in V} \mathbb{E}X_{j}
^{2} / B_{V}^{2}.
\end{equation*}
\end{theo}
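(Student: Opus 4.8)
The plan is to reduce the statement to the classical Bernstein inequality for independent summands by means of a sublattice decomposition that exploits the $m$-dependence. Set $N := (m+1)^{d}$ and, for $r \in \{0,1,\dots ,m\}^{d}$, let $L_{r} = \{ j \in \mathbb{Z}^{d}: j \equiv r \pmod{m+1} \}$. These $N$ classes partition $\mathbb{Z}^{d}$, and any two distinct points of one class are at $\|\cdot \|_{\infty }$-distance at least $m+1 > m$, so for each fixed $r$ the family $(X_{j})_{j \in V \cap L_{r}}$ consists of mutually \emph{independent} centered random variables. Writing $S_{V}(X) = \sum _{r} S_{r}$ with $S_{r} := \sum _{j \in V \cap L_{r}} X_{j}$ and applying the generalized H\"{o}lder inequality with $N$ exponents all equal to $N$, I would get, for any $t > 0$,
\[
\mathbb{E} e^{t S_{V}(X)} = \mathbb{E} \prod _{r} e^{t S_{r}} \leq \prod _{r} \bigl( \mathbb{E} e^{t N S_{r}} \bigr)^{1/N}.
\]

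The next step is the usual moment generating function estimate. From $\mathbb{E}X_{j} = 0$ and hypothesis~\eqref{eq:bernstein_cond}, bounding each term of the exponential series and summing the resulting geometric series $\sum _{p \geq 2} (sH)^{p-2}$ gives $\mathbb{E} e^{s X_{j}} \leq \exp\bigl( \frac{s^{2} \mathbb{E}X_{j}^{2}}{2(1 - sH)} \bigr)$ for $0 \leq s < 1/H$. Applying this with $s = tN$ (hence requiring $tNH < 1$), using independence inside each class, and noting that $\sum _{r} \sum _{j \in V \cap L_{r}} \mathbb{E}X_{j}^{2} = \sum _{j \in V} \mathbb{E}X_{j}^{2} = \rho _{V} B_{V}^{2}$, I would arrive at
\[
\mathbb{E} e^{t S_{V}(X)} \leq \exp \left ( \frac{t^{2} N \rho _{V} B_{V}^{2}}{2(1 - t N H)} \right ), \qquad 0 < t < \frac{1}{N H}.
\]

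It then remains to apply the Chernoff bound $\mathbb{P}(S_{V}(X) \geq x B_{V}) \leq e^{-t x B_{V}}\, \mathbb{E} e^{t S_{V}(X)}$ and to choose $t$ in each of the two regimes. For $0 \leq x \leq \rho _{V} B_{V}/H$ I would take $t = x/(2 N \rho _{V} B_{V})$, which satisfies $tNH \leq 1/2$, so that $1/(1-tNH) \leq 2$ and the exponent is bounded by $-tx B_{V} + t^{2} N \rho _{V} B_{V}^{2} = -x^{2}/(4 N \rho _{V})$. For $x \geq \rho _{V} B_{V}/H$ I would take $t = 1/(2NH)$, again giving $1/(1-tNH) = 2$; the exponent then equals $-x B_{V}/(2NH) + \rho _{V} B_{V}^{2}/(4 N H^{2})$, and since $\rho _{V} B_{V}/H \leq x$ the last term is at most $x B_{V}/(4NH)$, whence the exponent is at most $-x B_{V}/(4NH)$. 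Substituting $N = (m+1)^{d}$ yields precisely the two branches of the asserted inequality.

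The only non-routine ingredient is the sublattice/H\"{o}lder step of the first paragraph: it is what turns the $m$-dependent sum into a product of independent ones, but at the price of evaluating each block's moment generating function at the inflated argument $tN$. Keeping the constraint $tNH < 1$ throughout the Chernoff optimization is exactly why the final exponents carry the factor $(m+1)^{d}$ and why one obtains $-x^{2}/(4N\rho _{V})$ rather than the sharper $-x^{2}/(2N\rho _{V})$ of the independent case; the remaining computations are the classical Bernstein argument verbatim.
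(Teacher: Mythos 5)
Your argument is correct and reproduces the stated constants exactly; note, however, that the paper itself does not prove this theorem at all: it is quoted from~\cite[p.~316]{Heinrich}, where it is obtained by a different method, namely from bounds on cumulants of $m$-dependent random fields combined with a lemma translating cumulant growth into exponential tail estimates. Your route --- partitioning $\mathbb{Z}^{d}$ into the $N=(m+1)^{d}$ congruence classes modulo $m+1$, so that each class carries independent summands, then applying the generalized H\"{o}lder inequality to the moment generating function and running the classical Bernstein--Chernoff computation with the inflated argument $tN$ --- is a self-contained and more elementary derivation, and in this case it happens to deliver precisely the constants $4(m+1)^{d}$ of the statement (with $t=x/(2N\rho _{V}B_{V})$ in the first regime and $t=1/(2NH)$ in the second, both of which keep $tNH\leq 1/2$). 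Two small points are worth making explicit. First, mutual independence of $(X_{j})_{j\in V\cap L_{r}}$ does not follow from pairwise distance alone but from the paper's two-block definition of $m$-dependence via a one-line induction (split off one singleton at a time). Second, the termwise expansion and finiteness of $\mathbb{E}e^{sX_{j}}$ for $sH<1$ deserve a word of justification: condition~\eqref{eq:bernstein_cond} bounds the even moments $\mathbb{E}X_{j}^{2q}=\mathbb{E}|X_{j}|^{2q}$, hence $\mathbb{E}\cosh (sX_{j})<\infty $ for $sH<1$, and dominated convergence then legitimizes the series manipulation leading to $\mathbb{E}e^{sX_{j}}\leq \exp \bigl( s^{2}\mathbb{E}X_{j}^{2}/(2(1-sH))\bigr)$. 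With these details supplied, your proof is a valid alternative to the cited cumulant-based one.
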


The following lemma generalizes Lemma 8.1 in~\cite{KaRoSpoWalk18}.
It can be easily proven using the same arguments as there.

\begin{lem}
\label{lem:moment_bound_theta}
Let $(Y_{j})_{j \in Z^{d}}$ be a stationary $m$-dependent random field\index{random ! field}
satisfying $\mathbb{E}|Y_{0}|^{2q} < \infty $. Furthermore, let
$W \subset \mathbb{Z}^{d}$ be a finite subset, $n = \textup{card}(W)$,
and let $\hat{\theta }(u) = \frac{1}{n} \sum _{j \in W} Y_{j} e^{iuY
_{j}}$ and $\theta (u) = \mathbb{E}Y_{0} e^{iuY_{0}}$. Then
\begin{equation*}
\mathbb{E}|\hat{\theta }(u) - \theta (u)|^{2q} \leq \frac{C}{n^{q}}
\mathbb{E}|Y_{0}|^{2q},
\end{equation*}
where $C > 0$ is a constant.
\end{lem}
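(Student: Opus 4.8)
The plan is to mimic the proof of \cite[Lemma 8.1]{KaRoSpoWalk18}, the only new ingredient being that we keep the bound uniform in $u$ and track the constant through the $m$-dependence combinatorics. First I would write
\[
\hat{\theta}(u) - \theta(u) = \frac{1}{n} \sum_{j \in W} \xi_j(u), \qquad \xi_j(u) := Y_j e^{iuY_j} - \mathbb{E}\big[ Y_0 e^{iuY_0} \big],
\]
so that $(\xi_j(u))_{j \in W}$ is, for each fixed $u$, a centered $m$-dependent complex-valued random field with $|\xi_j(u)| \le |Y_j| + \mathbb{E}|Y_0|$ and, more importantly, a uniform second-moment control $\mathbb{E}|\xi_j(u)|^2 \le 4\,\mathbb{E}|Y_0|^2$ (by Jensen and stationarity). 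Then
\[
\mathbb{E}|\hat\theta(u) - \theta(u)|^{2q} = n^{-2q}\, \mathbb{E}\Big| \sum_{j \in W} \xi_j(u) \Big|^{2q},
\]
and the task is to show $\mathbb{E}\big| \sum_{j \in W} \xi_j(u) \big|^{2q} \lesssim n^{q}\, \mathbb{E}|Y_0|^{2q}$.

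The key step is a Rosenthal-type moment inequality for the partial sums of an $m$-dependent field. I would expand $\mathbb{E}\big| \sum_{j} \xi_j(u)\big|^{2q}$ into a sum over $2q$-tuples $(j_1,\dots,j_{2q}) \in W^{2q}$ of $\mathbb{E}\big[\prod_{\ell} \xi_{j_\ell}^{\pm}(u)\big]$ (where the $\pm$ records conjugation). By $m$-dependence and centering, such a mixed moment vanishes whenever some index $j_\ell$ is at $\|\cdot\|_\infty$-distance $> m$ from all the others; hence only tuples in which the indices cluster into groups of diameter $\le 2qm$ contribute, and a group containing a single point contributes $0$. A standard counting argument (partition the $2q$ positions into blocks, each block forcing its indices into an $m$-neighbourhood of a chosen representative, giving at most $C_{m,q,d}\, n^{q}$ admissible tuples since each of the $\le q$ free representatives ranges over $W$ and the remaining indices are confined to a box of size depending only on $m,q,d$) together with Hölder's inequality applied inside each mixed moment, $\big|\mathbb{E}\big[\prod_{\ell}\xi_{j_\ell}^{\pm}(u)\big]\big| \le \prod_\ell \big(\mathbb{E}|\xi_{j_\ell}(u)|^{2q}\big)^{1/(2q)} = \mathbb{E}|\xi_0(u)|^{2q} \le 2^{2q}\,\mathbb{E}|Y_0|^{2q}$, yields
\[
\mathbb{E}\Big| \sum_{j \in W} \xi_j(u) \Big|^{2q} \le C\, n^{q}\, \mathbb{E}|Y_0|^{2q}
\]
with $C = C(m,q,d)$ independent of $u$ and $W$. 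Dividing by $n^{2q}$ gives the claim.

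The main obstacle is the combinatorial bookkeeping in the moment expansion: one has to argue carefully that, after discarding tuples with an isolated index, every surviving tuple has its $2q$ indices partitioned into at most $q$ clusters each of bounded diameter, so that the number of such tuples is $O(n^q)$ rather than $O(n^{2q})$. This is exactly the point where $m$-dependence does the work, and it is where the constant $C$ acquires its dependence on $m$, $q$ and $d$; everything else (the Hölder step, the uniform-in-$u$ second-moment bound, and the reduction via stationarity) is routine, so I would state these carefully but not belabour them, referring to \cite[Lemma 8.1]{KaRoSpoWalk18} and \cite{Chen2004} for the combinatorial scheme.
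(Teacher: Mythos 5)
Your proposal is correct, and in fact it supplies more detail than the paper does: the paper gives no proof of this lemma at all, stating only that it ``can be easily proven using the same arguments as'' \cite[Lemma 8.1]{KaRoSpoWalk18}, so your moment expansion with the $m$-dependence clustering argument is a legitimate reconstruction of exactly the kind of Rosenthal-type bound that citation is meant to cover. The decomposition into centered summands $\xi_j(u)$, the observation that a mixed moment vanishes whenever one index is $\|\cdot\|_\infty$-isolated from the others, the resulting partition into at most $q$ clusters of diameter $O(qm)$ giving $O(n^q)$ surviving tuples, and the generalized H\"older bound $\bigl|\mathbb{E}\prod_\ell \xi_{j_\ell}^{\pm}(u)\bigr| \le \mathbb{E}|\xi_0(u)|^{2q} \le 2^{2q}\mathbb{E}|Y_0|^{2q}$ are all sound and yield a constant depending only on $m,q,d$, uniformly in $u$ and $W$, as required. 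The one point you should state explicitly is that the expansion of $|\sum_j \xi_j(u)|^{2q}$ into $2q$-fold products of $\xi$'s and $\bar\xi$'s requires $q$ to be a positive integer; this matches the intended scope of the lemma (the paper applies it with $q=1$ and recovers $q=1/2$ via Cauchy--Schwarz in the remark immediately following), but the lemma as written does not say so. If you want to avoid the combinatorial bookkeeping altogether (and cover non-integer $q\ge 1$), an alternative standard route is to split $W$ into $(m+1)^d$ sublattices on each of which the $\xi_j(u)$ are i.i.d., apply the classical Marcinkiewicz--Zygmund/Rosenthal inequality to each sub-sum, and recombine with the $c_r$-inequality; this gives the same $n^{-q}$ rate with a cleaner constant, at the price of invoking the i.i.d. inequality rather than proving everything from scratch.
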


\begin{rem}
Clearly, applying the \xch{Cauchy--Schwarz}{Cauchy-Schwart} inequality,
Lemma~\ref{lem:moment_bound_theta} also yields a bound in case that
$q = 1/2$.
\end{rem}

\subsection{Asymptotic covariance of $m$-dependent random \xch{field}{fiels}}%
\label{subsec:asymptotic_variance}
%
\begin{lem}
\label{lem:conv_of_covariance_fct_cubes_version}
Let the sequence $(B_{n})_{n\in \mathbb{N}}$ be \xch{regularly}{regulary} growing\index{regularly growing} to
infinity. Moreover, let $ (X_{j})_{j \in \mathbb{Z}^{d}}$ be a
stationary $m$-dependent random field\index{random ! field} and suppose there are measurable functions
$g^{(1)}$, $g_{n}^{(1)}$, $g^{(2)}$, $g_{n}^{(2)}:\mathbb{R}\to
\mathbb{R}$, $n \in \mathbb{N}$, with the following properties:
\begin{enumerate}%
\item
$\mathbb{E}[g_{n}^{(k)}(X_{0})] = 0$ for all $n \in \mathbb{N}$,
$k=1,2$;
\item
$\mathbb{E}[g^{(k)}(X_{0})^{2}]$, $\mathbb{E}[g_{n}^{(k)}(X_{0})^{2}]
< \infty $, $k=1,2$, $n \in \mathbb{N}$;
\item
$\lim _{n \rightarrow \infty } \mathbb{E}[g_{n}^{(1)}(X_{0})g_{n}^{(2)}(X
_{k})] = \mathbb{E}[g^{(1)}(X_{0})g^{(2)}(X_{k})] =: \sigma _{k}$, for
any $k \in \mathbb{Z}^{d}$.
\end{enumerate}
Then
\begin{equation*}
\lim _{n\rightarrow \infty } \Cov \Big ( |B_{n}|^{-1/2} \sum _{j \in B
_{n}} g_{n}^{(1)}(X_{j}), \ |B_{n}|^{-1/2} \sum _{k \in B_{n}} g_{n}
^{(2)}(X_{k}) \Big )
=
\sum _{\substack{t \in \mathbb{Z}^{d}:
\\
\|t\|_{\infty }\leq m}} \sigma _{t}.
\end{equation*}
\end{lem}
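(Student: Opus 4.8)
The plan is to prove the covariance convergence in Lemma~\ref{lem:conv_of_covariance_fct_cubes_version} by first expanding the covariance of the two normalized sums into a double sum over the index set $B_n$, then using stationarity to reduce it to a single sum over lag vectors weighted by overlap counts, and finally passing to the limit using $m$-dependence (which truncates the lag sum) together with the pointwise convergence assumption (3). First I would write
\begin{equation*}
\Cov \Big ( |B_{n}|^{-1/2} \sum _{j \in B_{n}} g_{n}^{(1)}(X_{j}), \ |B_{n}|^{-1/2} \sum _{k \in B_{n}} g_{n}^{(2)}(X_{k}) \Big ) = \frac{1}{|B_{n}|} \sum _{j \in B_{n}} \sum _{k \in B_{n}} \Cov \big ( g_{n}^{(1)}(X_{j}), g_{n}^{(2)}(X_{k}) \big ),
\end{equation*}
where I use property (1) so the covariances equal $\mathbb{E}[g_{n}^{(1)}(X_{j}) g_{n}^{(2)}(X_{k})]$. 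By $m$-dependence, a summand vanishes whenever $\|j-k\|_{\infty } > m$, so the double sum is supported on pairs with $\|j-k\|_{\infty } \leq m$; by stationarity of $(X_{j})$, the term depends on $j,k$ only through $t = j-k$, giving value $\mathbb{E}[g_{n}^{(1)}(X_{0}) g_{n}^{(2)}(X_{-t})] = \mathbb{E}[g_{n}^{(2)}(X_{0}) g_{n}^{(1)}(X_{t})]$. Hence the expression equals $\sum _{t: \|t\|_{\infty } \leq m} \frac{N_{n}(t)}{|B_{n}|} \, \sigma _{t,n}$, where $N_{n}(t) = |\{ j \in B_{n} : j - t \in B_{n} \}|$ and $\sigma _{t,n} := \mathbb{E}[g_{n}^{(1)}(X_{t}) g_{n}^{(2)}(X_{0})]$ (with the obvious relabeling for the sign of $t$).

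The next step is to show that $N_{n}(t)/|B_{n}| \to 1$ for every fixed $t$ as $B_n$ grows regularly. Indeed, $|B_{n}| - N_{n}(t)$ counts points $j \in B_{n}$ with $j - t \notin B_{n}$; each such $j$ lies within $\|t\|_{\infty }$ of the complement of $B_{n}$, hence within $\|t\|_{\infty }$ of $\partial B_{n}$ in the $\ell^\infty$ sense, so $|B_{n}| - N_{n}(t) \leq C(t) \, |\partial B_{n}|$ for a constant $C(t)$ depending only on $t$ and $d$ (a crude volumetric bound on the lattice thickening of the boundary). Since $|\partial B_{n}|/|B_{n}| \to 0$ by the definition of regular growth, we get $N_{n}(t)/|B_{n}| \to 1$. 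Because the lag sum has only finitely many terms (at most $(2m+1)^{d}$), and since by assumption (3) $\sigma _{t,n} \to \sigma _{t}$ for each $t$, I can interchange the (finite) sum with the limit: the product $\frac{N_{n}(t)}{|B_{n}|} \sigma _{t,n} \to \sigma _{t}$ for each $t$, and summing over the finitely many $t$ with $\|t\|_{\infty } \leq m$ yields the claimed limit $\sum _{t: \|t\|_{\infty } \leq m} \sigma _{t}$.

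The only mild subtlety, and the step I would treat most carefully, is the combinatorial boundary estimate $|B_{n}| - N_{n}(t) = o(|B_{n}|)$: one must be a bit careful that the boundary $\partial B_{n}$ as defined in the excerpt (lattice points at $\ell^\infty$-distance exactly $1$ from $B_n$) controls the full thickened shell of width $\|t\|_{\infty }$, which follows because any lattice point outside $B_n$ at distance $\leq \|t\|_{\infty }$ from $B_n$ can be connected to $\partial B_n$ by a short lattice path, giving $|\{j \notin B_n : \dist(j,B_n) \leq \|t\|_\infty\}| \leq (2\|t\|_\infty + 1)^d |\partial B_n|$, and then points $j \in B_n$ with $j - t \notin B_n$ are mapped injectively near this shell. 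Everything else is bookkeeping: reindexing the double sum via stationarity, invoking $m$-dependence to kill far-apart terms, and exchanging a finite sum with a limit. No deeper probabilistic input is needed beyond stationarity, $m$-dependence, and the hypotheses (1)--(3).
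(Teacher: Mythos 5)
Your proof is correct, and it takes a mildly different route from the paper's. The paper first splits the covariance into two pieces: a term $z_{n}$ in which the limiting functions $g^{(k)}$ replace $g_{n}^{(k)}$, whose convergence to $\sum_{\|t\|_{\infty}\leq m}\sigma_{t}$ is obtained by citing the computation in Bulinski and Shashkin (Theorem 1.8), and an error term $y_{n}$ collecting the differences $\mathbb{E}[g_{n}^{(1)}(X_{j})g_{n}^{(2)}(X_{k})]-\mathbb{E}[g^{(1)}(X_{j})g^{(2)}(X_{k})]$, which is bounded by the finitely many lag differences with $\|t\|_{\infty}\leq m$ and killed by hypothesis (3). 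You instead keep the $n$-dependent functions throughout, reindex the double sum by the lag $t=j-k$ (using $m$-dependence and hypothesis (1) to discard lags with $\|t\|_{\infty}>m$), and reduce everything to the overlap counts $N_{n}(t)/|B_{n}|\to 1$, which you prove from scratch via the lattice-path boundary estimate $|\{j\notin B_{n}:\dist(j,B_{n})\leq \|t\|_{\infty}\}|\leq (2\|t\|_{\infty}+1)^{d}|\partial B_{n}|$ and the definition of regular growth; the convergence $\sigma_{t,n}\to\sigma_{-t}$ from (3) then finishes the finite sum. The ingredients (stationarity, $m$-dependence, regular growth) are the same; your version is more self-contained since it reproves the overlap-count convergence that the paper delegates to the cited reference, and it sidesteps the paper's $y_{n}+z_{n}$ decomposition, at the cost of carrying the counting argument explicitly. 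Both arguments are sound.
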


\begin{proof}
We observe that
\begin{equation*}
\begin{split}
& \Cov \Big ( |B_{n}|^{-1/2} \sum _{j \in B_{n}} g_{n}^{(1)}(X_{j}), \ |B
_{n}|^{-1/2} \sum _{k \in B_{n}} g_{n}^{(2)}(X_{k}) \Big )
\\
={}
& \underbrace{\frac{1}{|B_{n}|} \sum _{j\in B_{n}} \sum _{k\in B_{n}}
\left ( \mathbb{E}\Big [ g_{n}^{(1)}(X_{j}), g_{n}^{(2)}(X_{k}) \Big ] -
\mathbb{E}\Big [ g^{(1)}(X_{j}), g^{(2)}(X_{k}) \Big ] \right )}_{=y
_{n}}
\\
& + \underbrace{\frac{1}{|B_{n}|} \sum _{j\in B_{n}} \sum _{k\in B_{n}}
\mathbb{E}\Big [ g^{(1)}(X_{j}), g^{(2)}(X_{k}) \Big ]}_{=z_{n}}.
\end{split}
\end{equation*}
Since $(X_{j})_{j \in \mathbb{Z}^{d}}$ is $m$-dependent and stationary,
and, since $(B_{n})_{n \in \mathbb{N}}$ is regularly growing\index{regularly growing} to
infinity, the same\vadjust{\goodbreak} computation as in the proof of Theorem 1.8
in~\cite[p.175]{Bulinski07} shows that
\begin{equation*}
\lim \limits _{n \to \infty } z_{n} =
\sum _{\substack{t \in \mathbb{Z}^{d}:
\\
\|t\|_{\infty }\leq m}} \sigma _{t}.
\end{equation*}
It remains to show that $\lim _{n \to \infty } y_{n} = 0$. Indeed,
the $m$-dependence and the property 1. yield
\begin{equation*}
\begin{split}
|y_{n}| \leq{}
& \frac{1}{|B_{n}|} \sum _{j \in B_{n}}
\sum _{k \in \mathbb{Z}^{d}} \Big | \mathbb{E}\Big [ g_{n}^{(1)}(X_{0}),
g_{n}^{(2)}(X_{k-j}) \Big ]
- \mathbb{E}\Big [ g^{(1)}(X_{0}), g^{(2)}(X
_{k-j}) \Big ] \Big |
\\
\leq{}
& \sum _{\substack{k \in \mathbb{Z}^{d}: \\ \|k\|_{\infty }
\leq m}} \Big | \mathbb{E}\Big [ g_{n}^{(1)}(X_{0}), g_{n}^{(2)}(X_{k})
\Big ]
- \mathbb{E}\Big [ g^{(1)}(X_{0}), g^{(2)}(X_{k}) \Big ] \Big |
\\
& \to 0 ,\quad \text{as} \ n \to \infty .\qedhere
\end{split}
\end{equation*}
\end{proof}
\end{appendix}

\begin{acknowledgement}
I would like to thank Evgeny Spodarev and Alexander Bulinski for their
fruitful discussions on the subject of this paper.
\end{acknowledgement}



\begin{thebibliography}{99}

\bibitem{BarnNiels11}
%
\begin{barticle}
\bauthor{\bsnm{Barndorff-Nielsen}, \binits{O.E.}}:
\batitle{Stationary infinitely divisible processes}.
\bjtitle{Braz. J. Probab. Stat.}
\bvolume{25}(\bissue{3}), \bfpage{294}--\blpage{322} (\byear{2011}).
\bid{doi={10.1214/11-BJPS140}, mr={2832888}}.
\end{barticle}
%
\OrigBibText
%
\begin{barticle}
\bauthor{\bsnm{Barndorff-Nielsen}, \binits{O.E.}}:
\batitle{Stationary infinitely divisible processes}.
\bjtitle{Brazilian Journal of Probability and Statistics}
\bvolume{25}(\bissue{3}), \bfpage{294}--\blpage{322} (\byear{2011})
\end{barticle}
\endOrigBibText
\bptok{structpyb}%
\endbibitem

\bibitem{BarnNielsSchmiegel04}
%
\begin{barticle}
\bauthor{\bsnm{Barndorff-Nielsen}, \binits{O.E.}},
\bauthor{\bsnm{Schmiegel}, \binits{J.}}:
\batitle{L\'{e}vy-based tempo-spatial modelling; with applications to turbulence}.
\bjtitle{Usp. Mat. Nauk} \bvolume{59}(\bissue{1}), \bfpage{63}--\blpage{90} (\byear{2004})
\end{barticle}
%
\OrigBibText
%
\begin{barticle}
\bauthor{\bsnm{Barndorff-Nielsen}, \binits{O.E.}},
\bauthor{\bsnm{Schmiegel}, \binits{J.}}: \batitle{L\'{e}vy-based tempo-spatial modelling; with applications to
turbulence}. \bjtitle{Uspekhi Mat. Nauk} \bvolume{59}(\bissue{1}),
\bfpage{63}--\blpage{90} (\byear{2004})
\end{barticle}
\endOrigBibText
\bptok{structpyb}%
\endbibitem

\bibitem{BarnNielsSchmiegel07}
%
\begin{bchapter}
\bauthor{\bsnm{Barndorff-Nielsen}, \binits{O.E.}},
\bauthor{\bsnm{Schmiegel}, \binits{J.}}:
\bctitle{Ambit processes; with applications to turbulence and tumour growth}.
\bbtitle{Stochastic Analysis and Applications}: \bsertitle{The Abel Symposium 2005}, \bfpage{93}--\blpage{124}
(\byear{2007}).
\bid{doi={10.1007/978-3-540-70847-6\_5}, mr={2397785}}
\end{bchapter}
%
\OrigBibText
%
\begin{botherref}
\oauthor{\bsnm{Barndorff-Nielsen}, \binits{O.E.}},
\oauthor{\bsnm{Schmiegel}, \binits{J.}}: Ambit processes; with
applications to turbulence and tumour growth. Stochastic Analysis and
Applications: The Abel Symposium 2005, 93--124 (2007)
\end{botherref}
\endOrigBibText
\bptok{structpyb}%
\endbibitem

\bibitem{BelPanWoern16}
%
\begin{botherref}
\oauthor{\bsnm{Belomestny}, \binits{D.}},
\oauthor{\bsnm{Panov}, \binits{V.}},
\oauthor{\bsnm{Woerner}, \binits{J.}}:
Low frequency estimation of continuous--time moving average L\'{e}vy processes.
to appear in: Bernoulli.
arXiv: \arxivurl{1607.00896v1} (2017).
\bid{doi={10.3150/17-bej1008}, mr={3920361}}
\end{botherref}
%
\OrigBibText
%
\begin{botherref}
\oauthor{\bsnm{Belomestny}, \binits{D.}},
\oauthor{\bsnm{Panov}, \binits{V.}},
\oauthor{\bsnm{Woerner}, \binits{J.}}: Low frequency estimation of
continuous--time moving average L\'{e}vy processes. to appear in:
Bernoulli. arXiv: 1607.00896v1 (2017)
\end{botherref}
\endOrigBibText
\bptok{structpyb}%
\endbibitem

\bibitem{belo2010}
%
\begin{bbook}
\bauthor{\bsnm{Belomestny}, \binits{D.}},
\bauthor{\bsnm{Comte}, \binits{F.}},
\bauthor{\bsnm{Genon-Catalot}, \binits{V.}},
\bauthor{\bsnm{Masuda}, \binits{H.}},
\bauthor{\bsnm{Rei\ss }, \binits{M.}}:
\bbtitle{L\'{e}vy Matters IV}.
\bpublisher{Springer} (\byear{2010}).
\bid{mr={3364253}}
\end{bbook}
%
\OrigBibText
%
\begin{bbook}
\bauthor{\bsnm{Belomestny}, \binits{D.}},
\bauthor{\bsnm{Comte}, \binits{F.}},
\bauthor{\bsnm{Genon-Catalot}, \binits{V.}},
\bauthor{\bsnm{Masuda}, \binits{H.}},
\bauthor{\bsnm{Rei\ss }, \binits{M.}}: \bbtitle{L\'{e}vy Matters IV}.
\bpublisher{Springer} (\byear{2010})
\end{bbook}
\endOrigBibText
\bptok{structpyb}%
\endbibitem

\bibitem{billingsley2012}
%
\begin{bbook}
\bauthor{\bsnm{Billingsley}, \binits{P.}}:
\bbtitle{Probability and Measure}.
\bpublisher{Wiley, New Jersey}
(\byear{2012}).
\bid{mr={2893652}}
\end{bbook}
%
\OrigBibText
%
\begin{bbook}
\bauthor{\bsnm{Billingsley}, \binits{P.}}:
\bbtitle{Probability and Measure}. \bpublisher{Wiley, New Jersey}
(\byear{2012})
\end{bbook}
\endOrigBibText
\bptok{structpyb}%
\endbibitem

\bibitem{Bulinski07}
%
\begin{bbook}
\bauthor{\bsnm{Bulinski}, \binits{A.}},
\bauthor{\bsnm{Shashkin}, \binits{A.}}:
\bbtitle{Limit Theorems for Associated Random Fields and Related Systems}.
\bpublisher{World Scientific Publishing, Singapore} (\byear{2007}).
\bid{doi={\\10.1142/9789812709417}, mr={2375106}}
\end{bbook}
%
\OrigBibText
%
\begin{bbook}
\bauthor{\bsnm{Bulinski}, \binits{A.}},
\bauthor{\bsnm{Shashkin}, \binits{A.}}:
\bbtitle{Limit Theorems for Associated Random Fields and Related Systems}.
\bpublisher{World Scientific Publishing, Singapore} (\byear{2007})
\end{bbook}
\endOrigBibText
\bptok{structpyb}%
\endbibitem

\bibitem{Chen2004}
%
\begin{barticle}
\bauthor{\bsnm{Chen}, \binits{L.H.Y.}},
\bauthor{\bsnm{Shao}, \binits{Q.}}:
\batitle{Normal approximation under local dependence}.
\bjtitle{Ann. Probab.} \bvolume{32}(\bissue{3A}), \bfpage{1985}--\blpage{2028} (\byear{2004}).
\bid{doi={10.1214/009117904000000450}, mr={2073183}}
\end{barticle}
%
\OrigBibText
%
\begin{barticle}
\bauthor{\bsnm{Chen}, \binits{L.H.Y.}},
\bauthor{\bsnm{Shao}, \binits{Q.}}:
\batitle{Normal approximation under local dependence}.
\bjtitle{The Annals of Probability} \bvolume{32}(\bissue{3A}),
\bfpage{1985}--\blpage{2028} (\byear{2004})
\end{barticle}
\endOrigBibText
\bptok{structpyb}%
\endbibitem

\bibitem{comte}
%
\begin{barticle}
\bauthor{\bsnm{Comte}, \binits{F.}},
\bauthor{\bsnm{Genon-Catalot}, \binits{V.}}:
\batitle{Nonparametric estimation for pure jump {L}\'{e}vy processes based on high frequency data}.
\bjtitle{Stoch. Process. Appl.} \bvolume{119}(\bissue{12}), \bfpage{4088}--\blpage{4123} (\byear{2009}).
\bid{doi={10.1016/j.spa.2009.09.013}, mr={2565560}}
\end{barticle}
%
\OrigBibText
%
\begin{barticle}
\bauthor{\bsnm{Comte}, \binits{F.}},
\bauthor{\bsnm{Genon-Catalot}, \binits{V.}}: \batitle{Nonparametric estimation for pure jump {L}\'{e}vy processes based on
high frequency data}.
\bjtitle{Stochastic Processes and their Applications}
\bvolume{119}(\bissue{12}), \bfpage{4088}--\blpage{4123} (\byear{2009})
\end{barticle}
\endOrigBibText
\bptok{structpyb}%
\endbibitem

\bibitem{comte1}
%
\begin{barticle}
\bauthor{\bsnm{Comte}, \binits{F.}},
\bauthor{\bsnm{Genon-Catalot}, \binits{V.}}:
\batitle{Nonparametric adaptive estimation for pure jump {L}\'{e}vy processes}.
\bjtitle{Ann. Inst. Henri Poincar\'{e} Probab. Stat.} \bvolume{46}(\bissue{3}), \bfpage{595}--\blpage{617} (\byear{2010}).
\bid{doi={10.1214/09-AIHP323}, mr={2682259}}
\end{barticle}
%
\OrigBibText
%
\begin{barticle}
\bauthor{\bsnm{Comte}, \binits{F.}},
\bauthor{\bsnm{Genon-Catalot}, \binits{V.}}:
\batitle{Nonparametric adaptive estimation for pure jump {L}\'{e}vy processes}.
\bjtitle{Ann. Inst. H. Poincar\'{e} Probab. Statist.}
\bvolume{46}(\bissue{3}), \bfpage{595}--\blpage{617} (\byear{2010})
\end{barticle}
\endOrigBibText
\bptok{structpyb}%
\endbibitem

\bibitem{dedecker}
%
\begin{barticle}
\bauthor{\bsnm{Dedecker}, \binits{J.}}:
\batitle{Exponential inequalities and functional central limit theorems for random fields}.
\bjtitle{ESAIM, Probab. Stat.} \bvolume{5}(\bissue{1}), \bfpage{77}--\blpage{104} (\byear{2001}).
\bid{doi={10.1051/\\ps:2001103}, mr={1875665}}
\end{barticle}
%
\OrigBibText
%
\begin{barticle}
\bauthor{\bsnm{Dedecker}, \binits{J.}}: \batitle{Exponential inequalities and functional central limit theorems for
random fields}. \bjtitle{ESAIM: Probability and Statistics}
\bvolume{5}(\bissue{1}), \bfpage{77}--\blpage{104} (\byear{2001})
\end{barticle}
\endOrigBibText
\bptok{structpyb}%
\endbibitem

\bibitem{deitmar2009}
%
\begin{bbook}
\bauthor{\bsnm{Deitmar}, \binits{A.}},
\bauthor{\bsnm{Echterhoff}, \binits{S.}}:
\bbtitle{Principles of Harmonic Analysis}.
\bpublisher{Springer}
(\byear{2009}).
\bid{mr={2457798}}
\end{bbook}
%
\OrigBibText
%
\begin{bbook}
\bauthor{\bsnm{Deitmar}, \binits{A.}},
\bauthor{\bsnm{Echterhoff}, \binits{S.}}:
\bbtitle{Principles of Harmonic Analysis}. \bpublisher{Springer}
(\byear{2009})
\end{bbook}
\endOrigBibText
\bptok{structpyb}%
\endbibitem

\bibitem{GlRothSpo017}
%
\begin{botherref}
\oauthor{\bsnm{Gl{\"u}ck}, \binits{J.}},
\oauthor{\bsnm{Roth}, \binits{S.}},
\oauthor{\bsnm{Spodarev}, \binits{E.}}:
A solution of a linear integral equation with the application to statistics of infinitely divisible moving averages.
Preprint. arXiv:\arxivurl{1807.02003} (2018)
\end{botherref}
%
\OrigBibText
%
\begin{botherref}
\oauthor{\bsnm{Gl{\"u}ck}, \binits{J.}},
\oauthor{\bsnm{Roth}, \binits{S.}},
\oauthor{\bsnm{Spodarev}, \binits{E.}}: A solution of a linear integral
equation with the application to statistics of infinitely divisible
moving averages. Preprint. arXiv:1807.02003. (2018)
\end{botherref}
\endOrigBibText
\bptok{structpyb}%
\endbibitem

\bibitem{gugushvili}
%
\begin{barticle}
\bauthor{\bsnm{Gugushvili}, \binits{S.}}:
\batitle{Nonparametric inference for discretely sampled {L}\'{e}vy processes}.
\bjtitle{Ann. Inst. Henri Poincar\'{e} Probab. Stat.} \bvolume{48}, \bfpage{282}--\blpage{307} (\byear{2012}).
\bid{doi={\\10.1214/11-AIHP433}, mr={2919207}}
\end{barticle}
%
\OrigBibText
%
\begin{bchapter}
\bauthor{\bsnm{Gugushvili}, \binits{S.}}:
\bctitle{Nonparametric inference for discretely sampled {L}\'{e}vy processes}.
In: \bbtitle{Ann. Inst. H. Poincar{\'e}, Probab. Statist.},
vol.~\bseriesno{48}, pp.~\bfpage{282}--\blpage{307} (\byear{2012})
\end{bchapter}
\endOrigBibText
\bptok{structpyb}%
\endbibitem

\bibitem{Heinrich}
%
\begin{barticle}
\bauthor{\bsnm{Heinrich}, \binits{L.}}:
\batitle{Some bounds of cumulants of m-dependent random fields}.
\bjtitle{Math. Nachr.} \bvolume{149}(\bissue{1}), \bfpage{303}--\blpage{317} (\byear{1990}).
\bid{doi={10.1002/mana.19901490123}, mr={1124812}}
\end{barticle}
%
\OrigBibText
%
\begin{barticle}
\bauthor{\bsnm{Heinrich}, \binits{L.}}:
\batitle{Some bounds of cumulants of m-dependent random fields}.
\bjtitle{Math. Nachr.} \bvolume{149}(\bissue{1}),
\bfpage{303}--\blpage{317} (\byear{1990})
\end{barticle}
\endOrigBibText
\bptok{structpyb}%
\endbibitem

\bibitem{jonsdottir2008}
%
\begin{barticle}
\bauthor{\bsnm{J\'{o}nsd\'{o}ttir}, \binits{K.Y.}},
\bauthor{\bsnm{Schmiegel}, \binits{J.}},
\bauthor{\bsnm{Jensen}, \binits{E.B.V.}}:
\batitle{L\'{e}vy-based growth models}.
\bjtitle{Bernoulli} \bvolume{14}(\bissue{1}), \bfpage{62}--\blpage{90} (\byear{2008}).
\bid{doi={10.3150/07-BEJ6130}, mr={2401654}}
\end{barticle}
%
\OrigBibText
%
\begin{barticle}
\bauthor{\bsnm{J\'{o}nsd\'{o}ttir}, \binits{K.Y.}},
\bauthor{\bsnm{Schmiegel}, \binits{J.}},
\bauthor{\bsnm{Jensen}, \binits{E.B.V.}}:
\batitle{L\'{e}vy-based growth models}. \bjtitle{Bernoulli}
\bvolume{14}(\bissue{1}), \bfpage{62}--\blpage{90} (\byear{2008}).
doi:\textsf{10.3150/07-BEJ6130}
\end{barticle}
\endOrigBibText
\bptok{structpyb}%
\endbibitem

\bibitem{Karcher12}
%
\begin{botherref}
\oauthor{\bsnm{Karcher}, \binits{W.}}:
On infinitely divisible random fields with an application in insurance.
\xch{PhD}{Phd} thesis,
Ulm University (2012)
\end{botherref}
%
\OrigBibText
%
\begin{botherref}
\oauthor{\bsnm{Karcher}, \binits{W.}}: On infinitely divisible random
fields with an application in insurance. Phd thesis, Ulm University
(2012)
\end{botherref}
\endOrigBibText
\bptok{structpyb}%
\endbibitem

\bibitem{KaRoSpoWalk18}
%
\begin{barticle}
\bauthor{\bsnm{Karcher}, \binits{W.}},
\bauthor{\bsnm{Roth}, \binits{S.}},
\bauthor{\bsnm{Spodarev}, \binits{E.}},
\bauthor{\bsnm{Walk}, \binits{C.}}:
\batitle{An inverse problem for infinitely divisible moving average random fields}.
\bjtitle{Stat. Inference Stoch. Process} (\byear{2018}).
\bid{doi={10.1007/s11203-018-9188-6}, mr={3959289}}
\end{barticle}
%
\OrigBibText
%
\begin{barticle}
\bauthor{\bsnm{Karcher}, \binits{W.}},
\bauthor{\bsnm{Roth}, \binits{S.}},
\bauthor{\bsnm{Spodarev}, \binits{E.}},
\bauthor{\bsnm{Walk}, \binits{C.}}: \batitle{An inverse problem for infinitely divisible moving average random
fields}. \bjtitle{to appear in: Stat. Inference Stoch. Process}
(\byear{2018}). doi:\textsf{10.1007/s11203-018-9188-6}
\end{barticle}
\endOrigBibText
\bptok{structpyb}%
\endbibitem

\bibitem{neumann}
%
\begin{barticle}
\bauthor{\bsnm{Neumann}, \binits{M.H.}},
\bauthor{\bsnm{Rei{\ss}}, \binits{M.}}:
\batitle{Nonparametric estimation for {L}\'{e}vy processes from low-frequency observations}.
\bjtitle{Bernoulli} \bvolume{15}(\bissue{1}), \bfpage{223}--\blpage{248} (\byear{2009}).
\bid{doi={\\10.3150/08-BEJ148}, mr={2546805}}
\end{barticle}
%
\OrigBibText
%
\begin{barticle}
\bauthor{\bsnm{Neumann}, \binits{M.H.}},
\bauthor{\bsnm{Rei{\ss }}, \binits{M.}}: \batitle{Nonparametric estimation for {L}\'{e}vy processes from low-frequency
observations}. \bjtitle{Bernoulli} \bvolume{15}(\bissue{1}),
\bfpage{223}--\blpage{248} (\byear{2009})
\end{barticle}
\endOrigBibText
\bptok{structpyb}%
\endbibitem

\bibitem{nickl}
%
\begin{barticle}
\bauthor{\bsnm{Nickl}, \binits{R.}},
\bauthor{\bsnm{Rei{\ss}}, \binits{M.}}:
\batitle{A \xch{Donsker}{donsker} theorem for {L}\'{e}vy measures}.
\bjtitle{\xch{J. Funct. Anal.}{Elsevier}} \bvolume{263}(\bissue{10}), \bfpage{3306}--\blpage{3332} (\byear{2012}).
\bid{doi={10.1016/j.jfa.2012.08.012}, mr={2973342}}
\end{barticle}
%
\OrigBibText
%
\begin{barticle}
\bauthor{\bsnm{Nickl}, \binits{R.}},
\bauthor{\bsnm{Rei{\ss }}, \binits{M.}}:
\batitle{A donsker theorem for {L}\'{e}vy measures}. \bjtitle{Elsevier}
\bvolume{263}(\bissue{10}), \bfpage{3306}--\blpage{3332} (\byear{2012})
\end{barticle}
\endOrigBibText
\bptok{structpyb}%
\endbibitem

\bibitem{Rajput}
%
\begin{barticle}
\bauthor{\bsnm{Rajput}, \binits{B.S.}},
\bauthor{\bsnm{Rosinski}, \binits{J.}}:
\batitle{Spectral representations of infinitely divisible processes}.
\bjtitle{Probab. Theory Relat. Fields} \bvolume{82}, \bfpage{451}--\blpage{487} (\byear{1989}).
\bid{doi={10.1007/\\BF00339998}, mr={1001524}}
\end{barticle}
%
\OrigBibText
%
\begin{botherref}
\oauthor{\bsnm{Rajput}, \binits{B.S.}},
\oauthor{\bsnm{Rosinski}, \binits{J.}}: Spectral representations of
infinitely divisible processes. Probab. Th. Rel. Fields (82), 451--487
(1989)
\end{botherref}
\endOrigBibText
\bptok{structpyb}%
\endbibitem

\bibitem{sato1999}
%
\begin{bbook}
\bauthor{\bsnm{Sato}, \binits{K.I.}}:
\bbtitle{L\'{e}vy Processes and Infinitely Divisible Distributions}.
\bpublisher{Cambridge University Press, Cambridge} (\byear{1999}).
\bid{mr={1739520}}
\end{bbook}
%
\OrigBibText
%
\begin{bbook}
\bauthor{\bsnm{Sato}, \binits{K.I.}}:
\bbtitle{L\'{e}vy Processes and Infinitely Divisible Distributions}.
\bpublisher{Cambridge University Press, Cambridge} (\byear{1999})
\end{bbook}
\endOrigBibText
\bptok{structpyb}%
\endbibitem

\bibitem{Trabs14}
%
\begin{barticle}
\bauthor{\bsnm{Trabs}, \binits{M.}}:
\batitle{On infinitely divisible distributions with polynomially decaying characteristic functions}.
\bjtitle{Stat. Probab. Lett.} \bvolume{94},
\bfpage{56}--\blpage{62} (\byear{2014}).
\bid{doi={10.1016/\\j.spl.2014.07.002}, mr={3257361}}
\end{barticle}
%
\OrigBibText
%
\begin{barticle}
\bauthor{\bsnm{Trabs}, \binits{M.}}: \batitle{On infinitely divisible distributions with polynomially decaying
characteristic functions}.
\bjtitle{Statististics \& Probability Letters} \bvolume{94},
\bfpage{56}--\blpage{62} (\byear{2014})
\end{barticle}
\endOrigBibText
\bptok{structpyb}%
\endbibitem

\end{thebibliography}
\end{document}